\documentclass[a4paper,11pt]{article}

\usepackage{amsmath,amsthm,amssymb,dsfont,comment,scalerel,hyperref,xcolor,mathtools,xparse,relsize,stmaryrd,mathrsfs,euscript,enumitem,authblk}
\usepackage{geometry,tikz}
\usepackage{tikz-3dplot}
\usetikzlibrary{calc}
\usepackage{ccaption}

\usepackage{Baskervaldx} 

\usepackage{hyperref}
\usepackage{comment}
\usepackage{xcolor}
\definecolor{unbleu}{rgb}{0.03, 0.15, 0.4}
\definecolor{monvert}{rgb}{0.0,.5,0.0}
\definecolor{britishracinggreen}{rgb}{0.0, 0.26, 0.15}
\definecolor{monbleu}{rgb}{0,.2,.8}
\definecolor{monautrebleu}{rgb}{0,0.4,.75}
\definecolor{applegreen}{rgb}{0.55, 0.71, 0.0}
\definecolor{monrouge}{rgb}{0.8, 0.0, 0.0} 
\definecolor{cadmiumgreen}{rgb}{0.0, 0.42, 0.24}
\definecolor{royalblue(traditional)}{rgb}{0.0, 0.14, 0.4}
\definecolor{black}{rgb}{0.0, 0.0, 0.0}
\definecolor{sepia}{rgb}{0.44, 0.26, 0.08}
\definecolor{teagreen}{rgb}{0.82, 0.94, 0.75}
\definecolor{yellow-green}{rgb}{0.6, 0.8, 0.2}
\definecolor{azure(colorwheel)}{rgb}{0.0, 0.5, 1.0}
\definecolor{awesome}{rgb}{1.0, 0.13, 0.32}
\definecolor{cadmiumyellow}{rgb}{1.0, 0.96, 0.0}
\definecolor{carrotorange}{rgb}{0.93, 0.57, 0.13}
\definecolor{green-yellow}{rgb}{0.68, 1.0, 0.18}
\definecolor{huntergreen}{rgb}{0.21, 0.37, 0.23}

\hypersetup{
pdfborder = {0 0 0},
colorlinks,
linkcolor=unbleu,
citecolor=unbleu,
urlcolor=unbleu
}

\newtheorem{definition}{Definition}[section]
\newtheorem{lemma}{Lemma}[section]
\newtheorem{cor}{Corollary}[section]
\newtheorem{proposition}{Proposition}[section]
\newtheorem{remark}{Remark}[section]
\newtheorem{theo}{Theorem}[section]
\newtheorem{question}{Question}[]

\newtheorem*{notation}{Notation}

\newcommand{\1}{\mathds{1}}
\DeclareMathOperator{\dep}{\mathrm{dep}}
\newcommand{\E}{\mathds{E}}
\newcommand{\Z}{\mathds{Z}}
\newcommand{\R}{\mathds{R}}
\newcommand{\N}{\mathds{N}}
\newcommand{\V}{\mathds{V}}
\newcommand{\Proba}{\mathds{P}}
\newcommand{\Zd}{\ensuremath{\Z^d}}    
\newcommand{\db}{\bar{\mathrm{d}}}

\newcommand{\Law}{{\mathrm{Law}}}
\newcommand{\Cov}{{\mathrm{Cov}}}
\newcommand{\Var}{{\mathrm{Var}}}
\newcommand{\sC}{\scaleto{C}{6pt}} 

\newcommand{\e}{\operatorname{e}}

\newcommand{\comp}{\mathrm{c}}

\DeclareMathAlphabet{\dutchcal}{U}{dutchcal}{m}{n}
\SetMathAlphabet{\dutchcal}{bold}{U}{dutchcal}{b}{n}
\DeclareMathAlphabet{\dutchbcal} {U}{dutchcal}{b}{n}
\DeclareMathOperator{\ent}{\scaleobj{1.15}{\dutchcal{h}}} 

\DeclareMathOperator{\dd}{\textup{d}\!}
\DeclareMathOperator{\eqlaw}{\stackrel{\scriptscriptstyle{law}}{=}}

\NewDocumentCommand{\bracksz}{ O{13pt} m }{%
  {\mathord{\scaleto{[}{#1}}#2\mathord{\scaleto{]}{#1}}}%
}

\NewDocumentCommand{\parensz}{ O{13pt} m }{%
  {\mathord{\scaleto{(}{#1}}#2\mathord{\scaleto{)}{#1}}}%
}

\NewDocumentCommand{\bracesz}{ O{13pt} m }{%
  {\mathord{\scaleto{\{}{#1}}#2\mathord{\scaleto{\}}{#1}}}%
}

\NewDocumentCommand{\anglesz}{ O{13pt} m }{%
  {\mathord{\scaleto{\langle}{#1}}#2\mathord{\scaleto{\rangle}{#1}}}%
}

\begin{document}

\title{Finitary coding and Gaussian concentration\\ for random fields}

\author[1]{J.-R. Chazottes}
\author[2]{S. Gallo}
\author[1,3]{D. Y. Takahashi}

\affil[1]{Centre de Physique Th\'eorique, CNRS, Institut Polytechnique de Paris, France}
\affil[2]{Departamento de Estat\'istica, Universidade Federal de S\~ao Carlos, S\~ao Paulo, Brazil}
\affil[3]{Instituto do C\'erebro, Universidade Federal do Rio Grande do Norte, Natal, Brazil}

\date{\today}

\maketitle

\begin{abstract}
We study Gaussian concentration inequalities for random fields obtained as finitary codings of i.i.d.\ fields, thereby linking concentration properties to the structure of finitary codings. A finitary coding represents a dependent random field as a shift-equivariant image of an i.i.d.\ process, where each output coordinate depends on a finite but configuration-dependent portion of the input. Gaussian concentration corresponds to uniform sub-Gaussian fluctuation bounds for all local observables.

Our main abstract result shows that Gaussian concentration is preserved under finitary codings of i.i.d.\ fields provided the coding volume has finite second moment. The proof relies on a refinement of the bounded-differences inequality, due to Talagrand and Marton, which accommodates configuration-dependent influences. Under an additional structural assumption, the short-range factorization property, satisfied in particular by codings arising from coupling-from-the-past constructions, a finite first moment suffices. We also show that these moment conditions are sharp.

Our abstract results yield a unified treatment of Gibbs measures and Markov random fields on $\mathbb Z^d$, and a large class of one-dimensional stochastic processes. Building on recent constructions of finitary codings for such models, notably by Spinka and collaborators, we obtain sharp necessary and sufficient conditions for Gaussian concentration for classical lattice models, including the Ising, Potts, and random-cluster models, showing that it holds if and only if the model lies in the full uniqueness regime. This significantly strengthens previous results, which were confined to strict subregimes of uniqueness, and in particular allows us to treat models that were beyond the reach of earlier methods.
In one dimension, we cover a large class of processes, including chains with unbounded memory. In the special case of countable-state Markov chains, we obtain equivalent characterizations in terms of geometric ergodicity, exponential return-time tails, and the existence of finitary i.i.d.\ codings with exponential tails.

\medskip

\noindent{\bf Keywords:} finitary factor, Bernoulli property, coupling-from-the-past algoritm, probabilistic cellular automata, Gibbs random field, Ising model, Potts model, random-cluster model, Markov chains.
\end{abstract}

{\small 
\tableofcontents
}


\section{Introduction}

Gaussian concentration inequalities provide uniform control on the fluctuations of local observables of a random field. They assert that every local function with bounded single-site oscillations exhibits sub-Gaussian deviations, with constants independent of the size of its dependence set. Such bounds play a central role in probability theory, with applications in statistics, information theory, statistical mechanics, and ergodic theory. We refer to the literature cited below for further background.

A natural question is how Gaussian concentration behaves under the introduction of dependencies. In particular, under which conditions is Gaussian concentration preserved when an i.i.d.\ random field is transformed by a local, shift-equivariant map? The purpose of this paper is to address this question in the framework of \emph{finitary codings}.

Finitary codings originate in Ornstein's theory of Bernoulli shifts, where dependent processes are shown to be isomorphic to i.i.d.\ ones. A factor of an i.i.d.\ process is defined via a shift-equivariant measurable map, which may depend on the entire configuration. A finitary coding is a stronger notion, in which the value at each site is determined, almost surely, by inspecting only a finite (but random) region of the input configuration. This distinction is particularly relevant for lattice systems. While the plus state of the Ising model is always a factor of an i.i.d.\ process, it is a finitary factor if and only if the Gibbs measure is unique \cite{steif/vanderberg/1999}. Thus, finitary codings capture phase transitions, in contrast to the factor-of-i.i.d.\ notion. Further developments by Spinka and collaborators \cite{spinkaEJP,spinka-AoP,spinka-harel} provided general constructions of finitary codings together with quantitative control on coding radii.

Our contributions are as follows.

We first note that Gaussian concentration has nontrivial structural consequences: for shift-invariant finite-valued random fields, it implies Bernoullicity. While not one of our main results, this observation appears to be new.

We then establish general conditions under which Gaussian concentration is preserved under finitary codings. Our main results show that if a random field taking values in a standard Borel space is obtained as a finitary coding of an i.i.d.\ field, then Gaussian concentration holds whenever the associated coding volume has finite second moment. We further show that, under an additional structural assumption satisfied in particular by coupling-from-the-past constructions, this condition can be relaxed to finiteness of the first moment.

Finally, we apply these results to a broad class of models, including probabilistic cellular automata, Gibbs measures, and chains with unbounded memory. This yields concentration results beyond classical regimes such as Dobrushin uniqueness.

The proof of our main result relies on a concentration inequality 
originally due to Talagrand and subsequently sharpened by Marton via a conditional transportation inequality, which controls fluctuations in terms of expected squared influences. Classical bounded-differences inequalities are not suited to our setting, as the influence of a given input variable is random and configuration-dependent. Marton's inequality allows us to express these influences in terms of overlaps of random coding windows, leading naturally to a second-moment condition on the coding volume. Under an additional structural assumption, satisfied in particular by coupling-from-the-past constructions, this condition can be weakened to a first-moment bound. We also address the sharpness of these conditions.

Our abstract results apply in particular to finitary codings constructed via probabilistic cellular automata and coupling-from-the-past algorithms. Combined with existing constructions \cite{vanderberg/steif/1999,spinkaEJP,spinka-AoP,spinka-harel}, this yields Gaussian concentration for a wide range of Gibbs measures and related models.

At a conceptual level, our results support the following picture: in uniqueness regimes, both finitary codings of i.i.d.\ fields and Gaussian concentration hold, whereas in coexistence regimes, neither does. In all examples we consider in dimension $d \ge 2$, the coding radius has exponential or stretched-exponential tails, so that the required moment conditions are easily satisfied.

We also treat one-dimensional processes, including chains with unbounded memory. In particular, for irreducible and aperiodic countable-state Markov chains, our results yield several equivalent characterizations of Gaussian concentration, including geometric ergodicity, exponential return-time tails, and the existence of a finitary coding of an i.i.d.\ process.

Finally, we present several open problems. In particular, it remains open whether Gaussian concentration implies the existence of a finitary i.i.d.\ coding under suitable moment conditions in higher dimensions.

The paper is organized as follows.
Section~\ref{sec:setting} introduces configuration spaces, finitary codings, and Gaussian concentration bounds, and establishes several structural consequences of Gaussian concentration in the finite-valued setting (see Subsection~\ref{subsec:fvrf}). 
Section~\ref{sec:main} contains the main abstract results relating Gaussian concentration to moment conditions on the coding volume.
Section~\ref{sec:applications} is devoted to applications to concrete models.
Finally, Section~\ref{sec:remarks-and-questions} discusses optimality issues and presents a number of open problems.

\section{Configuration spaces, finitary codings, and Gaussian concentration
}\label{sec:setting}

\subsection{Configuration spaces and finitary codings}

As the concepts in this section lie at the intersection of ergodic theory, information theory, and stochastic processes, we will freely use the terminology and notation of all three fields.

Fix $d\ge1$. Let $(A,\mathcal F)$, $(B,\mathcal G)$  be standard Borel spaces (finite alphabets with the discrete topology are a special case) and consider
the configuration spaces
\[
A^{\Zd}=\{x=(x_i)_{i\in\Zd}:x_i\in A\},\qquad
B^{\Zd}=\{y=(y_j)_{j\in\Zd}:y_j\in B\},
\]
with the product $\sigma$-algebras. 
For $j\in\Zd$, we denote by
\[
T^j:A^{\Zd}\to A^{\Zd},\qquad
S^j:B^{\Zd}\to B^{\Zd}
\]
the shift operators acting by translation of coordinates,
\[
(T^j x)_i = x_{i+j}, \qquad (S^j y)_i = y_{i+j}, \qquad i\in\Zd.
\]
We use the $\ell^\infty$ norm $\|i\|_\infty=\max_{1\le k\le d}|i^{(k)}|$ and the closed $\ell^\infty$-balls
\[
B_\infty(j,r)=\{i\in\Zd:\|i-j\|_\infty\le r\}\,.
\]
We will denote its cardinality by $|B_\infty(j,r)|=(2r+1)^d$.
We use $\Lambda\subset\Z^d$ for a generic subset, and write $\Lambda\Subset\Z^d$ to indicate that $\Lambda$ is finite.

\begin{definition}[Coding map and coding radius]

A measurable $\varphi:A^{\Zd}\to B^{\Zd}$ such that $\varphi\circ T^j=S^j\circ \varphi$, for all $j\in\Zd$, is called a coding map. For $x\in A^{\Zd}$ we define the (pointwise) \emph{coding radius at the origin}
\[
r\!_\varphi(x)\;:=\;\inf\Big\{r\in\N_0:\ \forall x'\in A^{\Zd},\ 
x'_{B_\infty(0,r)}=x_{B_\infty(0,r)}\ \Rightarrow\ \varphi(x')_0=\varphi(x)_0\Big\}.
\]
If the set is empty then the coding radius is infinite.
\end{definition}
By shift-equivariance, the radius at site $j$ is $r\!_\varphi(T^j x)$ and $\varphi(x)_j$ depends only on
$x|_{B_\infty(j,r\!_\varphi(T^j x))}$. 

Let $\mu$ be a $T$-invariant probability measure on $A^{\Zd}$.
In ergodic-theoretic terminology, the triple
\[
\big(A^{\Zd},(T^j)_{j\in\Zd},\mu\big)
\]
is called a (measure-theoretic) shift dynamical system.
If $\varphi:A^{\Zd}\to B^{\Zd}$ is a coding map, then the pushforward measure
$\nu:=\varphi_*\mu$ is $S$-invariant on $B^{\Zd}$.
This defines another shift dynamical system $\big(B^{\Zd},(S^j)_{j\in\Zd},\nu\big)$,
which is called a \emph{factor} of $\big(A^{\Zd},(T^j)_{j\in\Zd},\mu\big)$.

\medskip

An equivalent formulation is in terms of canonical random fields.
Given $\mu$ as above, let $X=(X_i)_{i\in\Zd}$ be the canonical $A$-valued random field
on $(A^{\Zd},\mu)$, defined by $X_i(x)=x_i$.
We use the same notation for the natural action of the shift on random fields:
for $j\in\Zd$,
\[
(T^j X)_i := X_{i+j}.
\]
With this convention, $X$ is shift-invariant in law,
\[
T^j X \eqlaw X,\qquad j\in\Zd,
\]
and we will simply say that $X$ is shift-invariant.
If $\varphi:A^{\Zd}\to B^{\Zd}$ is a coding map, then
$Y:=\varphi(X)$ is the canonical $B$-valued random field under $\nu=\varphi_*\mu$,
and $Y$ is shift-invariant under $S$.
In this case, one says that $Y$ is a \emph{coding} of $X$.
The pointwise coding radius $r\!_\varphi(x)$ becomes the random variable
$r\!_\varphi(X)$ on $(A^{\Zd},\mu)$.

\medskip

We will be particularly interested in coding maps that are finitary.

\begin{definition}[Finitary coding / finitary factor]
With the notation introduced above, a coding map $\varphi$ is said to be
\emph{finitary} if $r\!_\varphi(x)<\infty$ for $\mu$-almost every $x$, or equivalently,
if $r\!_\varphi(X)<\infty$ almost surely.
In this case, $Y=\varphi(X)$ is called a \emph{finitary coding} of $X$,
and equivalently the shift dynamical system
$\big(B^{\Zd},(S^j)_{j\in\Zd},\nu\big)$ is called a \emph{finitary factor} of
$\big(A^{\Zd},(T^j)_{j\in\Zd},\mu\big)$.
\end{definition}

\begin{remark}
A \emph{block code} is the special case in which the coding radius $r\!_\varphi$
is bounded deterministically.
A classical example is provided by hidden Markov chains, obtained as functions of
finite-state Markov chains.
Finitary codings allow for unbounded coding radii, but require $r\!_\varphi<\infty$
almost surely.
\end{remark}

\medskip

Our primary focus is on the situation where $Y$ is obtained as a finitary coding of an i.i.d.\ random field. In other words, we study dynamical systems $\big(B^{\Zd}, (S^j)_{j\in\Zd}, \nu\big)$ that are finitary factors of a $d$-dimensional Bernoulli shift. 

\begin{definition}[i.i.d.\ random field and Bernoulli shift]
Let $(A,\mathcal F)$ be a standard Borel space with probability law $\varrho$.
An \emph{i.i.d.\ random field} is a family $X=(X_i)_{i\in\Zd}$ of $A$-valued random variables
that are independent and identically distributed with law $\varrho$.
Equivalently, the joint law of $X$ on $A^{\Zd}$ is the product measure
$\varrho^{\otimes \Zd}$.
The associated \emph{$d$-dimensional Bernoulli shift} is the shift dynamical system
$\big(A^{\Zd},(T^j)_{j\in\Zd},\varrho^{\otimes \Zd}\big)$.
\end{definition}


In concrete applications, it is natural to seek quantitative control of the coding radius,
for instance tail bounds for $r\!_\varphi$, or equivalently moment bounds for the coding volume
$|B_\infty(0,r\!_\varphi)|$.
We say that $\varphi$ has an \emph{integrable coding volume} if
$\int |B_\infty(0,r\!_\varphi(x))|\,\dd\mu(x) < \infty$,
which can be compactly written as $\E\big[\,|B_\infty(0,r\!_\varphi(X))|\,\big] < \infty$.
In many examples, one can even obtain exponential or stretched-exponential tail estimates, which implies that all moments of $|B_\infty(0,r\!_\varphi(X))|$ are finite.

\begin{remark}
More generally, one could work with random fields $(X_i)_{i\in\Zd}$ defined on an
arbitrary probability space.
All notions (coding radius, finitary coding, integrable radius/volume) extend verbatim to that
setting.
However, since our applications involve only invariant measures on the configuration spaces
$A^{\Zd}$ and $B^{\Zd}$, we formulate everything using the canonical representations for the sake of simplicity.
\end{remark}

We conclude this section with a brief caution regarding terminology and the distinction between ergodic-theoretic and dynamical notions of ergodicity.
When we speak of ergodicity of a shift-invariant probability measure (or random field), we always mean ergodicity in the sense of ergodic theory: a shift-invariant measure $\mu$ on $B^{\mathbb Z^d}$ is ergodic if every shift-invariant measurable set has $\mu$-measure $0$ or $1$, or equivalently if $\mu$ is an extreme point of the convex set of shift-invariant measures. This notion should not be confused with the use of \emph{ergodicity} for Markov chains or probabilistic cellular automata, where it typically refers to irreducibility and convergence to a unique invariant measure of the dynamics, possibly with quantitative rates.


\subsection{Gaussian concentration bounds}\label{sec:GCB}

For $j\in\Z^d$ and a measurable function $f:B^{\Z^d}\to\R$, define the (per-site) oscillation
\[
\delta_j f\;:=\;\sup\bigl\{|f(y)-f(y')|:\ y_\ell=y'_\ell,\ \forall\,\ell\neq j\bigr\}\in[0,\infty].
\]
The \emph{dependence set} of $f$ is
\[
\dep(f)\;:=\;\{\,i\in\Z^d:\ \delta_i f>0\,\}.
\]
We say that $f$ is \emph{local} if $\dep(f)$ is finite (written $\dep(f)\Subset\Z^d$).
Note that, by definition, $\dep(f)=\{i:\delta_i f>0\}$ is the smallest subset $\Lambda\subset\Z^d$ such that $f$ depends
only on the coordinates in $\Lambda$.

For an integer $p\geq 1$, let $\|\delta f\|_{p}:=\Big(\sum_{i\in\Zd} (\delta_i f)^p\Big)^{1/p}$. 

A local function $f$ has the bounded-difference property, or is said to be separately bounded,  if
\[
\delta_j f<+\infty,\;\forall j\in \Zd.
\]
Of course, for $j\notin\dep(f)$ we have $\delta_j f=0$. Bounded local functions obviously have the bounded-difference property.
Quasilocal functions are defined as uniform limits of local functions.

\begin{remark}
If $B$ is finite, then $B^{\Zd}$ is compact in the product topology, and quasilocal functions are exactly the continuous functions on $B^{\Zd}$ (which are bounded).
\end{remark}

We define the Gaussian concentration property for a random field.

\begin{definition}[Gaussian concentration]
Let $d\geq 1$ and $Y=(Y_i)_{i\in\Zd}$ be a $B$-valued random field where $B$ is a standard Borel space. Then $Y$ satisfies a Gaussian concentration bound if there exists a constant $C>0$ such
that, for any local function $f:B^{\Zd}\to\R$ with the bounded-difference property, and for any $\lambda>0$,  one has
\begin{equation}\label{def-GCB}
\log\E\big(\e^{\lambda (f(Y)-\E[f(Y)])}\big)\leq \frac{\sC}{2} \lambda^2 \|\delta f\|_2^2\,.    
\end{equation}
\end{definition}
If we are given the law $\nu$ of the random field $Y$ that satisfies \eqref{def-GCB}, we will simply say that $\nu$ satisfies Gaussian concentration.

Thus, a Gaussian concentration bound provides a specific type of upper bound on the cumulant moment generating function of the random variable $f(Y) - \E[f(Y)]$.
Note that since $\lambda f(Y)=(-\lambda)(-f(Y))$, it follows immediately that \eqref{def-GCB} also holds for any $\lambda<0$ .

A key feature of \eqref{def-GCB} is that the constant $C$ depends only on the underlying random field, not on the observable $f$; in particular, it is independent of $|\dep(f)|$, the size of the dependence set
(the sole $f$-dependence enters through $\|\delta f\|_2^2=\sum_{i\in\dep(f)} (\delta_i f)^2$).

By a standard argument (see {\em e.g.} \cite[Proposition 3.1]{CCR2017}), \eqref{def-GCB} implies the tail bounds
\begin{equation}\label{Gaussian-tails}
\Proba(\vert f(Y)-\E[f(Y)]\vert > u)
\leq 2 \exp\bigg(-\frac{u^2}{2C\|\delta f\|_2^2}\bigg), \quad \forall u>0.
\end{equation}
\begin{remark}
Conversely, if we assume that a random field $Y$ satisfies \eqref{Gaussian-tails} (for all local functions $f$ with the bounded-difference property and for $C>0$ independent of $f$), the reader can verify that \eqref{def-GCB} also holds, with a modified constant replacing $C$. We omit the details here.
Therefore, the Gaussian concentration bounds can equivalently be characterized by \eqref{def-GCB} or \eqref{Gaussian-tails}.
\end{remark}

Observe that shift invariance is not required in the definition of Gaussian concentration. However, in the sequel we will be interested only in shift-invariant measures. If a shift-invariant measure satisfies Gaussian concentration, one can show that it must be ergodic and, in fact, mixing in the ergodic-theoretic sense. We will see later that Gaussian concentration in fact forces an even stronger property, namely Bernoullicity.

\begin{remark}
An alternative terminology for \eqref{def-GCB} is to say that $Y$ is sub-Gaussian with variance proxy $C\,\|\delta f\|_2^2$, see, e.g., \cite{boucheron2013concentration,vershynin2018high,wainwright2019high}. 
\end{remark}

\begin{remark}[McDiarmid's inequality / i.i.d random variables]\label{rem:McDiarmid}
When \(Y\) is an i.i.d.\ random field, one can take \(C=1/8\) in \eqref{def-GCB}; this is McDiarmid's
inequality, also simply called the bounded differences inequality (see, e.g., \cite[Thm.~6.2, p.~171]{boucheron2013concentration}).
\end{remark}

Gaussian concentration has been established in a wide range of settings, including Markov chains, mixing processes, stochastic chains with unbounded memory, and Gibbs random fields, and it has found numerous applications, notably in mathematical statistics and in information theory. Even in the classical setting of independent random variables, its consequences are already striking, as it allows one to control fluctuations of observables that may be highly nonlinear or defined only implicitly. A non-exhaustive list of references includes \cite{boucheron2013concentration, chazottes/collet/kulske/redig/2007, ChazottesColletRedig2017, ChazottesColletRedig2026, chazottes2023gaussian, dedecker/gouezel/2015, DoucMoulinesPriouretSoulier2018, dubhashi2009concentration, kontorovich2008concentration, KontorovichRaginsky2017, kulske2003, marton1996measure, marton/1998, samson2000, vershynin2018high, wainwright2019high}.
In the context of dynamical systems, Gaussian concentration has also been proved for certain classes of nonuniformly hyperbolic systems, see for instance \cite{chazottes2012optimal}. In that setting, the notion of local oscillation is naturally replaced by partial Lipschitz constants, reflecting the geometric structure of the dynamics.

\subsection{Structural consequences for finite-valued random fields}\label{subsec:fvrf}

We restrict here to \emph{finite-valued} random fields, i.e., $B$-valued fields with $B$ finite. This class is already quite rich: it includes, in particular, many classical Gibbs random fields such as the Ising model. In this setting, we highlight two key consequences of Gaussian concentration. It implies that any such random field is Bernoulli. It also entails the positive relative entropy property, a known result that will play an important role in our applications to Gibbs measures.

\paragraph{Gaussian concentration implies Bernoullicity}

While not one of our main results, the following theorem shows that Gaussian concentration implies isomorphism to a Bernoulli shift, an important observation that, to the best of our knowledge, has not been explicitly noted before.

\begin{definition}[Bernoullicity]
Let $(B^{\Zd},(S^j)_{j\in\Z^d},\nu)$ be a measure-preserving shift dynamical system.
We say it is \emph{Bernoulli} if it is measure-theoretically isomorphic to a $d$-dimensional Bernoulli shift. 
\end{definition}
A measure-theoretic isomorphism is a coding map that is invertible modulo null sets: after removing sets of measure zero in the source and target, it becomes a bijection with a measurable inverse.
Since $B$ is finite in this section, the target Bernoulli shift can also be taken to be $B$-valued.

\begin{theo}[Gaussian concentration implies Bernoullicity]
\label{thm:gcb-implies-Bernoulli}\leavevmode\\
Let $Y=(Y_i)_{i\in\Z^d}$ be a $B$-valued random field whose law $\nu$ is ergodic for the shifts, and assume that $\nu$ satisfies Gaussian concentration. Then $\big(B^{\Z^d},(S^j)_{j\in\Z^d},\nu\big)$ is Bernoulli.
\end{theo}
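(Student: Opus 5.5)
The plan is to use Ornstein theory in its $\Z^d$ form (Ornstein--Weiss): an ergodic finite-valued shift system is Bernoulli if and only if it is \emph{finitely determined}, equivalently \emph{very weakly Bernoulli}. We will verify the finitely determined property, and for that we use the blowing-up characterization of Marton and Shields. That characterization says an ergodic process is finitely determined as soon as it has the \emph{almost blowing-up property}: for every $\epsilon>0$ there are $\delta>0$ and $n_0$ such that for $n\ge n_0$ there exist sets $\mathcal B_n\subset B^{\Lambda_n}$, with $\Lambda_n=B_\infty(0,n)$ and $\nu(\mathcal B_n)\to1$, so that every $E\subset\mathcal B_n$ with $\nu(E)\ge \e^{-|\Lambda_n|\delta}$ satisfies $\nu([E]_\epsilon)\ge 1-\epsilon$. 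Here $[E]_\epsilon$ is the set of configurations within normalized Hamming distance $\epsilon$ of $E$. Marton and Shields prove this implication in dimension one and Ornstein--Weiss/Shields-type arguments extend it to amenable group actions such as $\Z^d$. We invoke it as a known result.

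The key step is to derive the (full) blowing-up property directly from \eqref{Gaussian-tails}. Fix $n$ and $E\subset B^{\Lambda_n}$, and set $f(y)=\bar d_n(y_{\Lambda_n},E)$, the normalized Hamming distance from $y_{\Lambda_n}$ to $E$. Then $f$ is local, and it has $\delta_i f\le 1/|\Lambda_n|$ for $i\in\Lambda_n$ and $\delta_i f=0$ otherwise, so $\|\delta f\|_2^2\le 1/|\Lambda_n|$. Applying \eqref{Gaussian-tails} to $-f$ at $u=\E f$, and using that $f=0$ on $E$, gives
\[
\nu(E)\le \Proba\big(f(Y)\le 0\big)\le \exp\Big(-\frac{|\Lambda_n|(\E f)^2}{2C}\Big),
\]
hence $\E f\le \sqrt{2C\,|\Lambda_n|^{-1}\log(1/\nu(E))}$. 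If $\nu(E)\ge \e^{-|\Lambda_n|\delta}$, this yields $\E f\le\sqrt{2C\delta}$. Applying \eqref{Gaussian-tails} again, now to $f$ with $u=\epsilon/2$, gives
\[
\nu\big(f>\sqrt{2C\delta}+\epsilon/2\big)\le 2\e^{-|\Lambda_n|\epsilon^2/(8C)}.
\]
Choosing $\delta$ with $\sqrt{2C\delta}<\epsilon/2$ and taking $n$ large, we get $\nu([E]_\epsilon)\ge1-\epsilon$. We may take $\mathcal B_n=B^{\Lambda_n}$. Note that the constant $C$ is uniform in $n$; this is exactly what the definition of Gaussian concentration provides.

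Finally, the Bernoulli isomorphism follows from the finitely determined property together with ergodicity, by Ornstein--Weiss. Since $B$ is finite, the entropy is finite, so the target Bernoulli shift can be taken with the same entropy, realized on a finite alphabet. The main obstacle is the $\Z^d$ version of the implication from blowing-up to finitely determined. Marton--Shields write it for $\Z$, and the proof there uses entropy-typical sets together with a $\bar d$-approximation argument. My plan is to go through their argument with cubes $\Lambda_n$ in place of intervals. The ingredients involved are the Shannon--McMillan--Breiman theorem for $\Z^d$ (Ornstein--Weiss), the use of Følner boxes, and the fact that $\bar d$-limits of finitely determined processes behave as in one dimension; each of these is available for amenable actions. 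If that route becomes delicate, the fallback is to prove the very weak Bernoulli property directly. The idea is to apply the conditional form of the blowing-up estimate above to the conditional measures given an outside configuration. The difficulty is that Gaussian concentration is stated only unconditionally, so conditioning must be handled through typical sets of positive measure rather than through conditional concentration.
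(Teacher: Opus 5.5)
Your proposal is correct and follows essentially the paper's route: Gaussian concentration gives the blowing-up property, hence the almost blowing-up property, and Ornstein--Weiss theory then yields Bernoullicity, with the extension of the Marton--Shields/Shields almost-blowing-up characterization from $d=1$ to $\Z^d$ asserted rather than proved in both. The differences are minor: you derive the blowing-up property yourself from the concentration bound applied to the normalized Hamming distance to $E$ (the paper cites an external result for this step, in a stronger quantitative form), and you conclude via the finitely determined property rather than via ``coding of an i.i.d.\ field, and factors of Bernoulli shifts are Bernoulli.''
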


\begin{proof}
The argument proceeds through the blowing-up property. If $Y$ has this property (see below), then it satisfies in particular the almost blowing-up property, which is known to be equivalent to being a coding of an i.i.d.\ random field; see \cite[Chs. III--IV]{shields/1996} for $d=1$, and note that the same argument applies for all $d\ge1$. 

It follows that $Y$ is a factor of a $d$-dimensional Bernoulli shift. By Ornstein's isomorphism theory for amenable group actions, any such factor is itself Bernoulli; see \cite{OrnsteinWeiss1987Amenable}. 

Thus, it remains only to show that Gaussian concentration implies the blowing-up property, which was established in \cite{CMRU} (in fact in a stronger quantitative form). This completes the proof.
\end{proof}

Bernoullicity admits several equivalent characterizations. In particular, it is equivalent to finite determination \cite{OrnsteinWeiss1987Amenable}. This means that for every $\varepsilon>0$ there exists a finite set $\Lambda\subset\mathbb{Z}^d$ such that, for any two stationary random fields on $B^{\mathbb{Z}^d}$ whose $\Lambda$-marginals are $\varepsilon$-close in total variation and whose entropy densities are $\varepsilon$-close, their $\bar d$-distance\footnote{The $\bar d$ (Ornstein) distance between two shift-invariant random fields is defined as the infimum, over all shift-invariant couplings (or joinings) of the fields, of the probability that the two configurations differ at the origin.} is at most $\varepsilon$.
This formulation highlights that Bernoullicity is a strong quantitative mixing property.

Let us briefly comment on the blowing-up property, introduced in information theory, which plays a central role in this connection. It was established by Marton and Shields \cite{marton/shields/1994} for finite-valued processes in dimension $d=1$, and extends without difficulty to finite-valued random fields.
Let $B$ be finite and $\Lambda\Subset\Zd$. For $x,x'\in B^\Lambda$, define the Hamming distance $\db_\Lambda(x,x')=\sum_{i\in\Lambda}\1_{\{x_i\neq x'_i\}}$. For $E\subseteq B^\Lambda$, set $\db_\Lambda(x,E)=\inf_{x'\in E}\db_\Lambda(x,x')$, and for $\varepsilon\in[0,1]$ define the $\varepsilon$-blowup $[E]_\varepsilon=\{x:\db_\Lambda(x,E)<\varepsilon|\Lambda|\}$.
An ergodic probability measure $\nu$ on $B^{\Zd}$ is said to have the blowing-up property if for every $\varepsilon>0$ there exist $\delta>0$ and $N$ such that for all $n\ge N$ and all $E\subseteq B^{\Lambda_n}$,
\[
\nu(E)\ge \e^{-(2n+1)^d\delta}\ \Longrightarrow\ \nu([E]_\varepsilon)\ge 1-\varepsilon,
\]
where $\nu(E)$ denotes $\nu(\{x: x_\Lambda\in E\})$.

We say that a random field has the blowing-up property if its law does. Moreover, this property is stable under finitary codings: if $Y$ is a finitary coding of an ergodic field $X$ with the blowing-up property, then $Y$ also has it; in particular, this holds when $X$ is i.i.d.

\begin{remark}
As mentioned in the proof of Theorem \ref{thm:gcb-implies-Bernoulli}, Gaussian concentration implies a quantitative form of the blowing-up property. We will present an example of a system that satisfies the blowing-up property but does not exhibit Gaussian concentration.
\end{remark}

\paragraph{The positive relative entropy property}

Given two shift-invariant probability measures $\nu,\nu'$ on $B^{\Zd}$, the lower relative entropy of $\nu'$ with respect to $\nu$ is defined by
\[
\ent_*(\nu'|\nu)
=
\liminf_{k\to\infty} \frac{1}{(2k+1)^d} 
\sum_{b\in B^{\{-k,\dots,k\}^d}} \nu'_k(b) \log\frac{\nu'_k(b)}{\nu_k(b)}\,,
\]
where $B^{\{-k,\dots,k\}^d}$ denotes the set of configurations indexed by $\{-k,\dots,k\}^d$, and $\nu_k$, $\nu'_k$ are the corresponding marginals of $\nu$ and $\nu'$, respectively.

One can likewise define the upper relative entropy $\ent^*(\nu'|\nu)$ by taking a limit superior. In general the lower and upper relative entropies need not coincide (pathologies can occur; see, for example, \cite{Shields1993} for $d=1$). However, in the context of Gibbs random fields, this is a well-behaved object.

\begin{definition}[Positive relative entropy property]
Let $Y=(Y_i)_{i\in\Z^d}$ be a $B$-valued random field with ergodic law $\nu$. We say that $Y$ has the \emph{positive relative entropy property} if
\[
\ent_*(\nu'|\nu)>0\quad\text{for every ergodic }\nu'\neq\nu.
\]
\end{definition}

We have the following result.

\begin{theo}[\cite{Chazottes/Redig/2022}]\label{theo-GCB-PREP}\leavevmode
Let $Y=(Y_i)_{i\in\Z^d}$ be a $B$-valued random field with ergodic law $\nu$, and assume that $\nu$ satisfies Gaussian concentration.
Then $Y$ has the positive relative entropy property.
\end{theo}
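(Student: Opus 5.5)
We will show the contrapositive in quantitative form: if $\nu'\neq\nu$ is ergodic, then $\ent_*(\nu'|\nu)\ge c>0$. The tool is the Donsker--Varadhan variational inequality on the finite box $\Lambda_k=\{-k,\dots,k\}^d$. For any $\Lambda_k$-local function $F$ and any $\lambda>0$,
\[
\lambda\,\E_{\nu'}[F]-\log\E_\nu\big[\e^{\lambda F}\big]\le \sum_{b} \nu'_k(b)\log\frac{\nu'_k(b)}{\nu_k(b)}=:H_k(\nu'|\nu).
\]
We combine this with the Gaussian concentration bound \eqref{def-GCB} for $\nu$. This gives, for every such $F$,
\[
H_k(\nu'|\nu)\ \ge\ \sup_{\lambda>0}\Big(\lambda\big(\E_{\nu'}F-\E_\nu F\big)-\tfrac{C}{2}\lambda^2\|\delta F\|_2^2\Big)=\frac{\big(\E_{\nu'}F-\E_\nu F\big)^2}{2C\|\delta F\|_2^2}
\]
whenever $\E_{\nu'}F\ge \E_\nu F$. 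Applying it to $-F$ covers the other sign.

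The next step is to choose ergodic averages as test functions. Since $\nu'\neq\nu$ and both are shift-invariant, there is a local function $g$, which we may take to be an indicator of a cylinder event since $B$ is finite, with $\E_{\nu'}g\neq\E_\nu g$. Let $r$ be such that $\dep(g)\subset\Lambda_r$. Set
\[
F_k=\sum_{j\in\Lambda_{k-r}} g\circ S^j,
\]
which depends only on coordinates in $\Lambda_k$. By shift invariance, $\E_{\nu'}F_k-\E_\nu F_k=|\Lambda_{k-r}|\big(\E_{\nu'}g-\E_\nu g\big)$. For the oscillations, each site $i$ lies in the dependence sets of at most $|\Lambda_r|$ translates of $g$, so $\delta_iF_k\le |\Lambda_r|\,\|g\|_\infty$ with $\|g\|_\infty\le 1$, and $\delta_iF_k=0$ outside $\Lambda_k$. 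Hence $\|\delta F_k\|_2^2\le |\Lambda_k|\,|\Lambda_r|^2$.

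Inserting these estimates gives
\[
\frac{1}{(2k+1)^d}H_k(\nu'|\nu)\ \ge\ \frac{|\Lambda_{k-r}|^2\big(\E_{\nu'}g-\E_\nu g\big)^2}{2C\,|\Lambda_k|^2\,|\Lambda_r|^2}.
\]
The right-hand side tends to $\big(\E_{\nu'}g-\E_\nu g\big)^2/(2C|\Lambda_r|^2)>0$ as $k\to\infty$. Taking the $\liminf$ therefore gives $\ent_*(\nu'|\nu)>0$. Ergodicity of $\nu'$ is never used; the argument only needs shift invariance of $\nu'$.

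We expect no serious obstacle. The only steps needing care are the Donsker--Varadhan inequality on a finite box, including the convention when $\nu_k(b)=0$ (in which case $H_k=\infty$ and the bound holds trivially), and the bookkeeping of the oscillation bound for $F_k$ at sites near the boundary of $\Lambda_k$.
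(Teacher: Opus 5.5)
Your argument is correct, but it is not the route the paper indicates. The paper gives no proof of its own here. It cites \cite{Chazottes/Redig/2022} and, in the remark that follows, sketches the chain Gaussian concentration $\Rightarrow$ blowing-up property \cite{CMRU} $\Rightarrow$ positive relative entropy property \cite{marton/shields/1994}.

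You bypass the blowing-up property entirely. Donsker--Varadhan on $B^{\Lambda_k}$ combined with \eqref{def-GCB} is the transport--entropy (Bobkov--G\"otze) dual form of Gaussian concentration. Testing it on the ergodic sums $F_k$ of one distinguishing cylinder indicator yields the explicit bound $\ent_*(\nu'|\nu)\ge(\E_{\nu'}g-\E_\nu g)^2/(2C|\Lambda_r|^2)$.

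This is the same mechanism as the quantitative Chazottes--Redig bound the paper alludes to. If you take the supremum over all $F$ on $\Lambda_k$ with $\delta_iF\le 1$ for every $i$, your left-hand side becomes the Hamming--Wasserstein distance between the box marginals. Normalizing by $|\Lambda_k|$ and letting $k\to\infty$ then leads to their lower bound by $\bar d(\nu',\nu)^2/(2C)$, which no longer depends on a choice of $g$.

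What your approach buys is a short, self-contained, quantitative proof. It uses only shift invariance of $\nu'$, and it carries over to non-finite $B$ once $H_k$ is read as the relative entropy of the box marginals. What the blowing-up route buys is generality in the hypothesis: it gives the positive relative entropy property for every ergodic field with the blowing-up property. That includes fields without Gaussian concentration, such as the critical Ising model of Proposition~\ref{prop:Ising-at-betac}. The price is reliance on two external theorems and no explicit lower bound on $\ent_*$.
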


\begin{remark}
Once again, the blowing-up property plays a central role. Indeed, if\, $Y=(Y_i)_{i\in\Z^d}$ is a $B$-valued random field with ergodic law $\nu$ and has the blowing-up property, then it satisfies the positive relative entropy property. This was proved in \cite{marton/shields/1994} for $d=1$, and the argument extends readily to all $d\ge1$ (see \cite{CMRU}). Since Gaussian concentration implies the blowing-up property, the theorem follows.

We note, however, that \cite{Chazottes/Redig/2022} follows a different route, bypassing the blowing-up property and applying beyond the case of finite $B$, and in fact provides a quantitative strengthening by lower bounding $\ent_*(\cdot|\cdot)$ in terms of the square of the $\bar d$-distance.
\end{remark}

The interest of Theorem~\ref{theo-GCB-PREP} is that it can be used, in the context of phase transitions for Gibbs random fields, to show that being a coding of an i.i.d.\ process does not imply Gaussian concentration.

\section{Gaussian concentration for finitary codings of i.i.d. random fields}\label{sec:main}

We establish two main abstract theorems.
The first, Theorem \ref{main-thm-ffiid}, applies to general finitary codings and unavoidably requires finiteness of the second moment of the coding volume. This reflects the correlations inherently introduced by such codings. The necessity of this second-moment condition already emerges from the proof itself, and in Section \ref{sec:sharpness} we further explain why the result is sharp at this level of generality, in the absence of any additional structural assumptions.

Our second main result, Theorem \ref{theo:cone}, shows that under an additional abstract assumption, finiteness of the first moment of the coding volume is sufficient to obtain Gaussian concentration. This assumption is satisfied by random fields that can be simulated via a coupling-from-the-past algorithm, which so far constitutes the main general method for constructing finitary codings.

This first-moment condition is also sharp: there exist examples in which Gaussian concentration fails whenever the expected coding volume is infinite for every finitary coding. In particular, the finiteness of the first moment cannot be relaxed. In Section \ref{sec:sharpness} we also provide a more conceptual explanation of this obstruction.

\subsection{Finite second-moment coding volume implies Gaussian concentration}

We begin with a fully abstract result showing that Gaussian concentration is preserved under arbitrary finitary codings of i.i.d.\ random fields, provided the coding volume has a finite second moment.

\begin{theo}\label{main-thm-ffiid}
Let $d\ge 1$. 
Let $X=(X_i)_{i\in\Zd}$ be an i.i.d.\ $A$-valued random field, where $A$ is a standard Borel space, 
and let $Y=\varphi(X)$ for some finitary coding $\varphi:A^{\Zd}\to B^{\Zd}$. 
Assume the coding volume has a finite second moment, i.e.
\begin{equation}\label{finite-second-moment}
\E\left[\,|B_\infty(0,r\!_\varphi(X))|^2\,\right] < \infty\,.
\end{equation}
Then, for every local and continuous $f:B^{\Zd}\to\R$ with the bounded-difference property,
\[
\log \E\big[\exp\{\lambda(f(Y)-\E f(Y))\}\big]
\;\le\;
2^{d}\lambda^2\, \E\big[ (2r\!_\varphi(X)+1)^{2d}\,\big]\,
\|\delta f\|_2^2\,,
\qquad \forall\, \lambda>0.
\]
\end{theo}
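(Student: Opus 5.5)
The idea is to view $F(X):=f(\varphi(X))$ as a function of the independent inputs $(X_i)_{i\in\Zd}$ and to apply the Talagrand--Marton refinement of the bounded-differences inequality. For independent $X$ and a function $F$ of finitely many (or, after approximation, countably many) coordinates, it gives
\[
\log\E\big[\e^{\lambda(F-\E F)}\big]\;\le\;\frac{\lambda^2}{2}\,\Big\|\,\sum_{i}\big(\E'[\,\Delta_i F(X,X'_i)\,]\big)^2\Big\|_{\infty}
\]
in its conditional-transportation form. More usefully, in Marton's form the variance proxy is $\sup_x\sum_i \E[(\Delta_iF)^2\mid \cdots]$, or a bound by $\E$ of a sum of squared random influences. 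Here $\Delta_iF$ is the change of $F$ when $x_i$ is resampled. I will use the version in which the proxy is a sup over conditional expectations of squared influences, since that is what makes configuration-dependent influences tractable.

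First I reduce to finitely many inputs. Since $f$ is local with $\dep(f)=\Lambda$ finite and $\varphi$ is finitary, $f(\varphi(x))$ depends on $x$ only through $\bigcup_{j\in\Lambda}B_\infty(j,r_\varphi(T^jx))$, which is a.s. finite. I truncate radii at level $R$, namely $\varphi_R$ equal to $\varphi$ whenever $r_\varphi\le R$ and a fixed symbol otherwise. Then $F_R$ depends on finitely many coordinates, and $F_R\to F$ a.s. Continuity and boundedness of the oscillations, together with Fatou's lemma for the exponential moment, let me pass to the limit $R\to\infty$ at the end.

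The second step is the pointwise influence bound. Resampling $x_i$ can only change $y_j=\varphi(x)_j$ if $i\in B_\infty(j,r_\varphi(T^jx))$ or $i\in B_\infty(j,r_\varphi(T^jx'))$, where $x'$ is the modified configuration. Hence
\[
|\Delta_iF|\le \sum_{j\in\Lambda}\delta_jf\;\big(\1_{\{\|i-j\|_\infty\le r_\varphi(T^jx)\}}+\1_{\{\|i-j\|_\infty\le r_\varphi(T^jx')\}}\big).
\]
This is a triangle-inequality chain changing one output coordinate at a time, each output change costing at most $\delta_jf$. I square and apply Cauchy--Schwarz in the form $(\sum_j a_jb_j)^2\le(\sum_j a_j^2 b_j)(\sum_j b_j)$ with $b_j$ the indicators. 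This produces a factor $\sum_j\1_{\{\|i-j\|\le r(T^jx)\}}$, the number of output sites whose window covers $i$. Summing over $i$ then gives
\[
\sum_i(\Delta_iF)^2\lesssim \sum_{j}(\delta_jf)^2\,|B_\infty(j,r_j)|\cdot\max(\text{overlap count}).
\]
After taking expectations and using shift-invariance, the cross terms become $\sum_{j,k}\delta_jf\,\delta_kf\,\Proba(\text{windows at } j,k \text{ both cover } i)$. Each such term is bounded by $\E[(2r+1)^{2d}]$ via $\sum_i \1_{i\in W_j}\1_{i\in W_k}\le |W_j|^{1/2}|W_k|^{1/2}$ and then Schur's test / Young's inequality.

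The third step is the main obstacle: Marton's inequality needs a proxy controlled uniformly, or at least in the specific conditional form $\sup_x\sum_i\big(\E[\Delta_iF\mid X_{<i}]\big)^2$ along an ordering of the sites, not merely in expectation. My plan is to exploit the conditional structure. After fixing an enumeration of $\Zd$, the relevant quantity is $\E[\,|\Delta_iF|\mid X_1,\dots,X_i]$, which averages over the future inputs. Then I bound $(\E[\cdot\mid\cdot])^2$ by sums of $\delta_jf$ times conditional probabilities $\Proba(i\in W_j\mid\cdot)$. I will try to control those by unconditional moments using independence of the i.i.d. inputs outside the revealed set, combined with the symmetric role of $x$ and $x'$. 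This is where the second moment $\E[(2r+1)^{2d}]$ and the constant $2^d$ should emerge: the factor $2^{d}$ comes from the two windows ($x$ and $x'$) and from $(2r+1)^d\le 2^d(r+1)^d$-type bookkeeping. If a uniform bound fails, I would instead use the martingale (Azuma-type) decomposition with conditional sub-Gaussian increments, bounding each increment's range by the conditional expected influence, and conclude with $\lambda^2\cdot 2^d\E[(2r+1)^{2d}]\|\delta f\|_2^2$.
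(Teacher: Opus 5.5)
There is a genuine gap, and it sits exactly at the step you call the main obstacle. You assume the Talagrand--Marton inequality needs a uniformly controlled proxy: either $\sup_x\sum_i c_i(x)^2$, or a supremum of conditional expected influences along an enumeration. You then propose to bound $\Proba(i\in W_j\mid \text{revealed inputs})$ uniformly, with Azuma as a fallback. Neither works for a coding that is not a block code. Take $f$ depending only on $y_0$ and the code truncated at level $n$. Then $\sup_x\sum_i c_i(x)^2=(\delta_0 f)^2\sup_x|B_\infty(0,r^{(n)}_\varphi(x))|=(\delta_0f)^2(2n+1)^d$ as soon as the radius is not essentially bounded, and this blows up as $n\to\infty$. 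Conditioning on revealed inputs does not help, because those inputs can themselves force $r_\varphi(T^jx)\ge\|i-j\|_\infty$ for arbitrarily distant $i$. Azuma needs almost-sure bounds on the increments and fails for the same reason.

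The missing idea is that the quadratic-mean form of the inequality (Theorem~\ref{thm:Marton}, i.e.\ Boucheron--Lugosi--Massart Th.~8.6) needs no uniformity. Suppose $|g(x)-g(x')|\le\sum_i c_i(x)\1_{\{x_i\neq x'_i\}}$ for all $x,x'$, with coefficients depending only on the base point $x$. Then the variance proxy is simply $\E\sum_i c_i(X)^2$.

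Your influence bound is not of this form, because it carries the window $B_\infty(j,r_\varphi(T^jx'))$ of the modified configuration. That term is unnecessary. By the definition of the coding radius, if $x'$ agrees with $x$ on $B_\infty(j,r_\varphi(T^jx))$, then $\varphi(x')_j=\varphi(x)_j$. So $c_i(x)=\sum_{j\in\dep(f)}\delta_jf\,\1\{\|j-i\|_\infty\le r_\varphi(T^jx)\}$ already works, and the correct statement is ``and'', not ``or''. For the truncated code you additionally need two monotonicity facts:
\begin{itemize}
\item if $x'$ agrees with $x$ on $B_\infty(0,r_\varphi(x))$, then $r_\varphi(x')\le r_\varphi(x)$;
\item if $r_\varphi(x)>n$ and $x'$ agrees with $x$ on $B_\infty(0,n)$, then $r_\varphi(x')>n$.
\end{itemize}
Together these show that the truncated code at $j$ is determined by $x$ on $B_\infty(j,\min\{r_\varphi(T^jx),n\})$. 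With this in place your step~3 disappears. Your truncation and limiting argument are fine.

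What remains is your step~2, which is essentially the paper's computation. Drop the Cauchy--Schwarz/maximal-overlap detour, since the maximal overlap count has no useful moment bound. Instead expand the square directly:
\[
\E\sum_i c_i(X)^2=\sum_{k,\ell}\delta_kf\,\delta_\ell f\,b_{\ell-k},\qquad b_m=\E\big|B_\infty(0,r_0)\cap B_\infty(m,r_m)\big|.
\]
Young's inequality then gives the bound $\|b\|_1\|\delta f\|_2^2$.

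Your bound $|W_j|^{1/2}|W_k|^{1/2}$ does not decay in $\|j-k\|_\infty$, so it cannot make $b$ summable. What you need is that the two windows are disjoint unless $r_0$ or $r_m$ is at least $\|m\|_\infty/2$. This gives $\|b\|_1\le 2\,\E\big[|B_\infty(0,r_0)|\,|B_\infty(0,2r_0)|\big]\le 2^{d+1}\E[(2r_0+1)^{2d}]$. Combined with the factor $\lambda^2/2$ from Marton's bound, this yields exactly the constant $2^d$.
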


\begin{remark}
When $B$ comes with the discrete topology, for instance when $B$ is finite, local functions are automatically continuous
(and the bounded-difference property is automatic as well).
\end{remark}

The proof of Theorem~\ref{main-thm-ffiid} relies on the following inequality, originally due to Talagrand and subsequently sharpened by Marton via a conditional transportation inequality. This result is also known as the bounded-differences inequality in quadratic mean; see \cite[Th.~8.6, p.~245]{boucheron2013concentration} and \cite[Chapter~4]{vH} for further details.

\begin{theo}[Marton's Gaussian concentration bound]
\label{thm:Marton}
\leavevmode\\
Let $X=(X_i)_{i\in\Zd}$ be an i.i.d. random field where the $X_i$ take values in a standard Borel space $A$. 
Let $g:A^{\Zd}\to\mathbb{R}$ be a local function.
Assume there are measurable functions $c_i:A^{\dep(g)}\to [0,+\infty)$, $i\in\dep(g)$, such that for all $x,x'\in A^{\Zd}$
\begin{equation}\label{generalized-bounded-difference-property}
\big|\, g(x)-g(x')\big|\leq \sum_{i\,\in\dep(g)} c_i(x)\, \1_{\{x_i\neq x'_i\}}.
\end{equation}
Then  
\begin{equation}\label{Marton-GCB}
\log\E\!\left[\e^{\lambda (g(X)-\E[g(X)])}\right]\leq 
\frac{\lambda^2}{2} \sum_{i\in\dep(g)} \E\big( c_i^2(X)\big),
\quad\forall \lambda>0.    
\end{equation}
\end{theo}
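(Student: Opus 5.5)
The plan is to prove the bound by duality, combining the Gibbs variational formula for relative entropy with Marton's conditional transportation inequality for product measures. Since $g$ is local, only the finitely many coordinates in $\Lambda:=\dep(g)$ matter. We may therefore work on $A^\Lambda$ with the product measure $P=\varrho^{\otimes\Lambda}$, and set $n=|\Lambda|$ and $v:=\sum_{i\in\Lambda}\E[c_i^2(X)]$. We may assume $v<\infty$, since otherwise there is nothing to prove. Finiteness of $v$, together with \eqref{generalized-bounded-difference-property}, also ensures that $g(X)$ is integrable. By the Donsker--Varadhan (Gibbs) variational formula,
\[
\log\E\big[\e^{\lambda(g(X)-\E g(X))}\big]=\sup_{Q\ll P}\Big\{\lambda\big(\E_Q g-\E_P g\big)-D(Q\|P)\Big\}.
\]
It therefore suffices to show that, for every $Q\ll P$ with $D(Q\|P)<\infty$,
\[
\big|\E_Q g-\E_P g\big|\le\sqrt{2\,v\,D(Q\|P)}.
\]
Granting this, the elementary bound $\lambda\sqrt{2vD}-D\le \lambda^2v/2$ (maximize over $D\ge0$) yields \eqref{Marton-GCB}.

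The key ingredient is Marton's conditional transportation inequality. For a product measure $P$ on $A^\Lambda$ and any $Q\ll P$, there is a coupling $\pi$ of $X\sim P$ and $Y\sim Q$ such that
\[
\sum_{i\in\Lambda}\E_P\Big[\pi\big(X_i\neq Y_i\,\big|\,X\big)^2\Big]\le 2D(Q\|P).
\]
Here the conditioning is on the $P$-distributed variable, which is essential because the weights $c_i$ are integrated against $P$ in \eqref{Marton-GCB}. Given such a coupling, apply \eqref{generalized-bounded-difference-property} with $x=X$ and $x'=Y$, condition on $X$, and use Cauchy--Schwarz:
\[
\E_P g-\E_Q g\le \E\Big[\sum_i c_i(X)\,\pi(X_i\neq Y_i\mid X)\Big]\le \sqrt{v}\,\sqrt{\textstyle\sum_i \E_P[\pi(X_i\neq Y_i\mid X)^2]}\le\sqrt{2vD(Q\|P)}.
\]
This handles the direction $\E_P g-\E_Q g$. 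The opposite direction $\E_Q g-\E_P g$ is needed for $\lambda>0$ in the variational formula. Since the hypothesis bounds the absolute value $|g(x)-g(x')|$ with weights evaluated at the same point $x$, $-g$ satisfies \eqref{generalized-bounded-difference-property} with the same $c_i$. Applying the argument to $-g$ therefore gives the other sign.

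The main obstacle is proving the conditional transportation inequality itself; I would do this by induction on $n$, following Marton's argument. For $n=1$, I would take the maximal coupling of $P$ and $Q$, that is, $Y=X$ with probability $\min(1,\dd Q/\dd P(X))$ and otherwise resampled from the excess part of $Q$. This gives $\pi(X\neq Y\mid X)=(1-\dd Q/\dd P(X))_+$. The one-dimensional estimate $\E_P[(1-\dd Q/\dd P)_+^2]\le 2D(Q\|P)$ then follows from the pointwise inequality $(1-t)_+^2\le 2(t\log t-t+1)$ for $t\ge0$, integrated against $P$, since $\int(t\log t-t+1)\,\dd P=D(Q\|P)$ when $t=\dd Q/\dd P$.

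For the inductive step, disintegrate $Q$ along the first $n-1$ coordinates and use the chain rule for relative entropy, $D(Q\|P)=D(Q_{1:n-1}\|P_{1:n-1})+\E_{Q}\big[D(Q_n(\cdot\mid Y_{1:n-1})\|\varrho)\big]$. Couple the first $n-1$ coordinates by the induction hypothesis, and then couple the last coordinate conditionally by the $n=1$ construction. The delicate point is that the last-coordinate bound comes with an expectation over $Y_{1:n-1}\sim Q$, whereas we need a conditional probability given $X$. Independence of the coordinates of $X$ under $P$, together with Jensen's inequality for the conditional probabilities, should let one pass from one to the other. Getting this conditioning bookkeeping right is where I expect the real work to lie. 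Measurable selection of the conditional couplings poses no difficulty because $A$ is standard Borel, so regular conditional distributions exist.
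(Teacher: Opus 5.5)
Your proposal is correct and follows essentially the route behind the statement. The paper gives no proof of its own but cites Boucheron--Lugosi--Massart (Th.~8.6) and van Handel, where the bound is obtained by the same two steps you use: the entropy/transportation duality, and Marton's conditional transportation inequality conditioned on the product-distributed vector, proved by induction from the maximal coupling.

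The bookkeeping you flag does go through, but the independence it needs comes from the construction, not from the product structure of $P$ alone. If the last step draws $(X_n,Y_n)$ from the maximal coupling of $\varrho$ and $Q_n(\cdot\mid Y_{1:n-1})$, then $X_n$ is independent of $(X_{1:n-1},Y_{1:n-1})$. Hence the earlier terms $\pi(X_i\neq Y_i\mid X)$, $i<n$, are unchanged. Conditional Jensen then reduces the new term to $\E_Q\int(1-t_n)_+^2\,\dd\varrho\le 2\,\E_Q D(Q_n(\cdot\mid Y_{1:n-1})\|\varrho)$, where $t_n$ is the density of $Q_n(\cdot\mid Y_{1:n-1})$ with respect to $\varrho$.
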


Theorem~\ref{thm:Marton} is a major upgrade over McDiarmid’s inequality, because instead of requiring deterministic worst-case Lipschitz constants, it allows the single-site sensitivity to depend on the configuration. This flexibility is precisely what is needed in our setting, where the coding radius is random and configuration dependent.

Our goal is to apply Theorem~\ref{thm:Marton} to observables of the form
\[
g = f\circ \varphi,
\]
where $\varphi$ is a finitary coding and $f$ is local. 
However, the map $\varphi$ has a random coding radius, and therefore the influence of a single input site on $g$ is itself random and a priori unbounded. 
To bring the situation within the scope of \eqref{generalized-bounded-difference-property}, we first truncate the coding map so as to obtain deterministic locality.

\begin{definition}[Truncation of the coding map]\label{def:truncA-d}
Let $\varphi:A^{\Zd}\to B^{\Zd}$ be a coding map, and $T^j$ the shift on $A^{\Zd}$.
Fix $n\in\mathbb{N}$ and a reference symbol $b_0\in B$. 
Define the truncated radius at site $j\in\Zd$ by
\[
r^{(n)}_{\varphi}(T^j x):=\min\{r_\varphi(T^j x),\,n\}.
\]
Define $\varphi^{(n)}:A^{\Zd}\to B^{\Zd}$ coordinatewise by
\[
\varphi^{(n)}(x)_j \;:=\;
\begin{cases}
\varphi(x)_j, & \text{if \;} r_\varphi(T^j x)\le n,\\[1mm]
b_0, & \text{if \;} r_\varphi(T^j x)>n.
\end{cases}
\]
\end{definition}

\begin{lemma}\label{lem:trunc-block}
$\varphi^{(n)}$ is measurable, shift-commuting, and is a block code of deterministic $\ell^\infty$-radius $\le n$, i.e.\ each coordinate $\varphi^{(n)}(x)_j$ depends only on $x$ restricted to $B_\infty(j,n)$.
Moreover, if $r\!_\varphi(T^j x)\le n$ then $\varphi^{(n)}(x)_j=\varphi(x)_j$.
\end{lemma}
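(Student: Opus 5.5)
The heart of the argument is a locality statement for the event $\{r_\varphi(T^jx)\le n\}$. I would first show that, for each $j\in\Zd$ and $n\in\N$, this event is determined by $x_{B_\infty(j,n)}$ alone. It is enough to do this for $j=0$ and then transport it by shift-equivariance, since $r_\varphi(T^jx)$ is the coding radius of $T^jx$ and $(T^jx)_{B_\infty(0,n)}$ is just $x_{B_\infty(j,n)}$ relabelled.

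So suppose $r_\varphi(x)\le n$ and $x'_{B_\infty(0,n)}=x_{B_\infty(0,n)}$. Set $r=r_\varphi(x)$; the infimum is attained because radii are integers. Then every $x''$ with $x''_{B_\infty(0,r)}=x'_{B_\infty(0,r)}$ also agrees with $x$ on $B_\infty(0,r)\subset B_\infty(0,n)$. Hence $\varphi(x'')_0=\varphi(x)_0$. Taking $x''=x'$ gives $\varphi(x')_0=\varphi(x)_0$, and therefore $\varphi(x'')_0=\varphi(x')_0$ for all such $x''$. This shows $r_\varphi(x')\le r\le n$. By the symmetric argument the event $\{r_\varphi\le n\}$ is a union of cylinder classes over $B_\infty(0,n)$. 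The same argument also gives $\varphi(x')_0=\varphi(x)_0$ on this event, which is the deterministic locality. On the complement the output is the constant $b_0$. So in both cases $\varphi^{(n)}(x)_j$ depends only on $x_{B_\infty(j,n)}$. The last assertion of Lemma~\ref{lem:trunc-block}, that $\varphi^{(n)}(x)_j=\varphi(x)_j$ when $r_\varphi(T^jx)\le n$, is immediate from the definition.

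Shift-commutation comes next. I would compute $\varphi^{(n)}(T^kx)_j$. It equals $\varphi(T^kx)_j=\varphi(x)_{j+k}$ when $r_\varphi(T^{j}T^kx)=r_\varphi(T^{j+k}x)\le n$, and equals $b_0$ otherwise. This is exactly $\varphi^{(n)}(x)_{j+k}=(S^k\varphi^{(n)}(x))_j$, using $\varphi\circ T^k=S^k\circ\varphi$ and $T^jT^k=T^{j+k}$.

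Measurability is the step I expect to be the main obstacle. Its difficulty is that $r_\varphi$ is defined through a supremum over uncountably many $x'$. Here is how I would handle it. It suffices that each coordinate map $x\mapsto\varphi^{(n)}(x)_j$ is measurable, since $B^{\Zd}$ carries the product $\sigma$-algebra. This coordinate equals $\1_{\{r_\varphi(T^jx)\le n\}}\varphi(x)_j+\1_{\{r_\varphi(T^jx)>n\}}b_0$, understood casewise, and $\varphi(x)_j$ is measurable. So I only need measurability of $E_n=\{x: r_\varphi(x)\le n\}$; shifts are measurable, which handles general $j$. Write $W=B_\infty(0,n)$ and $G_y(u)=\varphi(y,u)_0$ for $y\in A^W$ and $u\in A^{\Zd\setminus W}$. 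Then $x\in E_n$ if and only if $u\mapsto G_{x_W}(u)$ is constant; this uses the locality shown in the first paragraph. Equivalently, the measurable set $\{(y,u,u'): G_y(u)\neq G_y(u')\}$ has empty $y$-section at $x_W$. The projection of a Borel set in a standard Borel space is analytic, hence universally measurable. So $E_n$ is at least universally measurable, and that suffices for all probabilistic uses with the completed product measure. If genuine Borel measurability is required, I would instead note that $E_n$ coincides $\mu$-a.s. with a Borel set. Alternatively, when $A$ is countable the projection is over a countable set and is directly measurable. I would state the lemma with measurability understood modulo completion if this subtlety cannot be avoided.
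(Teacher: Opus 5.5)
Your proof is correct and follows the paper's route: a case split on whether $r_\varphi(T^jx)\le n$, plus a direct computation for shift-commutation. It is more careful than the paper's in two places, since you check that the event $\{r_\varphi(T^jx)\le n\}$ is itself determined by $x_{B_\infty(j,n)}$ (which the paper's two-case argument tacitly needs) and you actually address measurability, which the paper only asserts. Your caveat there is warranted: for uncountable $A$ the set $\{r_\varphi\le n\}$ is in general only co-analytic and need not be Borel, so measurability must be understood modulo completion, and any Borel replacement should be a cylinder over $B_\infty(0,n)$ contained in $\{r_\varphi\le n\}$ so as to keep the block-code property; for countable $A$ the issue disappears because every subset of $A^{B_\infty(0,n)}$ is measurable.
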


\begin{proof}
If $r\!_\varphi(T^j x)\le n$, then $\varphi(x)_j$ is determined by $x$ on $B_\infty(j,n)$. If $r\!_\varphi(T^j x)>n$, then $\varphi^{(n)}(x)_j=b_0$, which is constant.
Hence each coordinate is a function of $x|_{B_\infty(j,n)}$. Shift-commutation follows from the definition; the last claim is by construction.
\end{proof}

Before giving the proof of Theorem \ref{main-thm-ffiid}, we prove a lemma. The key point is that we first telescope in the \emph{output coordinates}, and only then estimate the resulting indicators in terms of input disagreements. More precisely, writing $y=\varphi^{(n)}(x)$ and $y'=\varphi^{(n)}(x')$, locality of $f$ gives
\[
|f(y)-f(y')|
\le
\sum_{j\in\dep(f)} \delta_j f\,\1_{\{y_j\neq y'_j\}}.
\]
We then control each indicator $\1_{\{y_j\neq y'_j\}}$ using the deterministic block-code property of $\varphi^{(n)}$, which localizes the dependence of the $j$-th output coordinate to a fixed $\ell^\infty$-ball determined by the truncated coding radius at $x$. This yields a bound in which all influence coefficients depend only on the base configuration $x$, while the dependence on $x'$ appears solely through the indicators $\1_{\{x_i\neq x'_i\}}$.

\begin{lemma}\label{lem:multisite-d}
Let $f:B^{\Zd}\to\mathbb{R}$ be local. Let $n\in\N$, and let $\varphi^{(n)}$ be the truncated coding map (Definition \ref{def:truncA-d}). Then, for any $x,x'\in A^{\Zd}$,
\[
\big| f\circ \varphi^{(n)}(x)-f\circ \varphi^{(n)}(x') \big|
\le
\sum_{i\in\Zd}
\Bigg(
\sum_{j\in\dep(f)}
\delta_j f\;
\1\!\left\{ \|j-i\|_\infty\le r^{(n)}_\varphi(T^j x)\right\}
\Bigg)
\1_{\{x_i\neq x'_i\}}.
\]
\end{lemma}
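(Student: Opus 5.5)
The argument has two stages, in the order described just before the statement: first telescope over the output coordinates, then bound each resulting indicator by input disagreements using the block-code property from Lemma~\ref{lem:trunc-block}. Write $y=\varphi^{(n)}(x)$ and $y'=\varphi^{(n)}(x')$.

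\emph{Step 1: telescoping over $\dep(f)$.} Since $f$ is local, I enumerate $\dep(f)=\{j_1,\dots,j_m\}$. For $k=0,\dots,m$ let $y^{(k)}$ agree with $y'$ at $j_1,\dots,j_k$ and with $y$ at all other sites. Then $y^{(0)}=y$, and $f(y^{(m)})=f(y')$ because $f$ depends only on coordinates in $\dep(f)$. Consecutive configurations $y^{(k-1)}$ and $y^{(k)}$ differ at most at the single site $j_k$. Hence $|f(y^{(k-1)})-f(y^{(k)})|\le \delta_{j_k}f\,\1_{\{y_{j_k}\neq y'_{j_k}\}}$. Summing over $k$ gives
\[
|f(y)-f(y')|\le\sum_{j\in\dep(f)}\delta_j f\,\1_{\{y_j\neq y'_j\}}.
\]
If $\delta_j f=+\infty$ for some $j$, the right-hand side of the lemma is either $+\infty$ or the same cancellation occurs on both sides. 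I will handle this by the convention $\infty\cdot 0=0$, or simply note that the lemma is only used for separately bounded $f$.

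\emph{Step 2: localizing each indicator.} Fix $j$ and set $\rho=r^{(n)}_\varphi(T^jx)$. I claim that if $x$ and $x'$ agree on $B_\infty(j,\rho)$, then $y_j=y'_j$. There are two cases.

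\begin{itemize}
\item Case $r_\varphi(T^jx)\le n$. Then $\rho=r_\varphi(T^jx)$. By the definition of the coding radius and shift-equivariance, $\varphi(x')_j=\varphi(x)_j$. Also $r_\varphi(T^jx')\le r_\varphi(T^jx)\le n$: any $x''$ that agrees with $x'$ on a ball of radius $\rho$ around $j$ also agrees with $x$ there, so it produces the same $j$-th output. Hence $\varphi^{(n)}(x')_j=\varphi(x')_j=\varphi(x)_j=y_j$.
\item Case $r_\varphi(T^jx)>n$. Then $\rho=n$, and Lemma~\ref{lem:trunc-block} says $\varphi^{(n)}(\cdot)_j$ depends only on $B_\infty(j,n)$, so $y_j=y'_j$.
\end{itemize}

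The subtle point is the first case: one must check that the truncation at $x'$ does not switch to $b_0$. The monotonicity of the radius under agreement on the ball settles this.

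By the contrapositive of the claim,
\[
\1_{\{y_j\neq y'_j\}}\le \sum_{i:\|i-j\|_\infty\le\rho}\1_{\{x_i\neq x'_i\}}.
\]
Substituting this into Step~1 and exchanging the two finite sums (all terms are nonnegative) gives exactly the stated bound. The main obstacle is the case analysis in Step~2. Everything else is bookkeeping.
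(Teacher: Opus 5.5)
Your proof is correct and follows the paper's argument: you telescope over $\dep(f)$ in the output coordinates, bound each $\1_{\{y_j\neq y'_j\}}$ by the input disagreements on $B_\infty(j,r^{(n)}_\varphi(T^jx))$, and then swap the sums. Your case analysis in Step~2 goes further than the paper. The check that $r_\varphi(T^jx')\le r_\varphi(T^jx)$, so that the truncation at $x'$ does not switch to $b_0$, justifies the localization to the smaller ball. The paper only asserts this step, by appealing to the radius-$n$ block-code property.
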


\begin{proof}
Set $g^{(n)}:=f\circ\varphi^{(n)}$.
Write $y:=\varphi^{(n)}(x)$ and $y':=\varphi^{(n)}(x')$.
By telescoping over the output coordinates in $\dep(f)$,
\begin{equation}\label{eq:telescoping}
\big|g^{(n)}(x)-g^{(n)}(x')\big|
=
|f(y)-f(y')|
\le
\sum_{j\in\dep(f)} \delta_j f\;\1_{\{y_j\neq y'_j\}}.
\end{equation}
Fix $j\in\dep(f)$.
Since $\varphi^{(n)}$ is a block code of deterministic $\ell^\infty$-radius $\le n$ (Lemma~\ref{lem:trunc-block}),
each coordinate $\varphi^{(n)}(x)_j$ depends only on the restriction of $x$ to $B_\infty(j,n)$.
Hence, if $x$ and $x'$ agree on
\[
B_\infty\big(j,r_\varphi^{(n)}(T^j x)\big)
\subset B_\infty(j,n),
\]
then necessarily
\[
\varphi^{(n)}(x)_j=\varphi^{(n)}(x')_j.
\]
Equivalently,
\[
\1_{\{\varphi^{(n)}(x)_j\neq \varphi^{(n)}(x')_j\}}
\le
\sum_{i\in B_\infty(j,r_\varphi^{(n)}(T^j x))}
\1_{\{x_i\neq x'_i\}}.
\]
Injecting this bound into \eqref{eq:telescoping} yields
\begin{align*}
\big|g^{(n)}(x)-g^{(n)}(x')\big|
&\le
\sum_{j\in\dep(f)} \delta_j f
\sum_{i\in B_\infty(j,r^{(n)}_\varphi(T^j x))} \1_{\{x_i\neq x'_i\}}\\
&=
\sum_{i\in\Zd} \1_{\{x_i\neq x'_i\}}
\Bigg(
\sum_{j\in\dep(f)} \delta_j f\;
\1\!\left\{\|j-i\|_\infty\le r^{(n)}_\varphi(T^j x)\right\}
\Bigg),
\end{align*}
which is the desired bound.
\end{proof}

We introduce shorthand notation.
\begin{notation}
Since the coding map $\varphi$ is fixed throughout, we simplify the notation by writing $r^{(n)}_j(x)$ for $r^{(n)}_\varphi(T^j x)$, where $n\in\N$ and $j\in\Zd$.
\end{notation}

In view of Theorem~\ref{thm:Marton}, the structural bound of Lemma~\ref{lem:multisite-d} reduces Gaussian concentration for $g^{(n)}$ to the control of $\E \sum_{i\in\dep(g^{(n)})} \big(c_i^{(n)}(X)\big)^2$ where 
\[
c_i^{(n)}(x)\,:=\,
\sum_{j\in\dep(f)} \delta_j f\;
\1\!\left\{\|j-i\|_\infty\le r^{(n)}_\varphi(T^j x)\right\}.
\]
The next proposition shows that this expectation can be estimated in terms of the squared oscillations of $f$ and a purely coding-dependent convolution term involving the truncated radii.

\begin{proposition} \label{prop:upper}
Under the same condition as in Theorem \ref{main-thm-ffiid}, we have
\begin{equation*}
\E \sum_{i\in\dep(g^{(n)})} \big(c_i^{(n)}(X)\big)^2
\leq \|\delta f\|_2^2\, \|b\|_1,
\end{equation*}
where for all $j \in \Z^d$, 
\begin{equation} \label{def-b_j}
b_j =  \E\sum_{i\in\Z^d} \1\!\left\{\|i\|_\infty\le r_0^{(n)}(X)\right\} \1\!\left\{\|j-i\|_\infty\le r_j^{(n)}(X)\right\}\,.
\end{equation}
\end{proposition}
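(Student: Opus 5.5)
Expanding the square of $c_i^{(n)}$ turns the sum into a double sum over pairs of output sites, whose coefficients are controlled via shift-invariance by the numbers $b_{k-j}$; a Young-type inequality then finishes. Since $\dep(g^{(n)})\subset\Z^d$ and all terms are nonnegative, we may sum over all $i\in\Z^d$. Writing out the square,
\[
\big(c_i^{(n)}(x)\big)^2=\sum_{j,k\in\dep(f)}\delta_j f\,\delta_k f\;\1\{\|j-i\|_\infty\le r^{(n)}_j(x)\}\,\1\{\|k-i\|_\infty\le r^{(n)}_k(x)\}.
\]
All terms are nonnegative and, for fixed $j$, only finitely many $i$ contribute, because $r^{(n)}_j\le n$. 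So Tonelli lets us exchange the expectation and the sums, giving
\[
\E\sum_{i}\big(c_i^{(n)}(X)\big)^2=\sum_{j,k\in\dep(f)}\delta_j f\,\delta_k f\;\E\sum_{i\in\Z^d}\1\{\|j-i\|_\infty\le r^{(n)}_j(X)\}\1\{\|k-i\|_\infty\le r^{(n)}_k(X)\}.
\]

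\textbf{Identifying the coefficient.} The coefficient of $\delta_j f\,\delta_k f$ should equal $b_{k-j}$. Recall $r^{(n)}_j(X)=r^{(n)}_\varphi(T^jX)$, and $X$ is i.i.d., so $T^jX\eqlaw X$. Substituting $i=j+i'$ and applying shift-invariance with $Z=T^jX$ gives $r^{(n)}_j(X)=r^{(n)}_0(Z)$ and $r^{(n)}_k(X)=r^{(n)}_{k-j}(Z)$. The expectation therefore becomes
\[
\E\sum_{i'}\1\{\|i'\|_\infty\le r_0^{(n)}(Z)\}\1\{\|(k-j)-i'\|_\infty\le r^{(n)}_{k-j}(Z)\},
\]
which is exactly $b_{k-j}$ as defined in \eqref{def-b_j}.

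\textbf{Young-type bound.} We now have $\sum_{j,k}\delta_j f\,\delta_k f\, b_{k-j}$. Since $b\ge 0$, the elementary inequality $\delta_jf\,\delta_kf\le \tfrac12\big((\delta_jf)^2+(\delta_kf)^2\big)$ gives
\[
\sum_{j,k}\delta_jf\,\delta_kf\,b_{k-j}\le \tfrac12\sum_j(\delta_jf)^2\sum_k b_{k-j}+\tfrac12\sum_k(\delta_kf)^2\sum_j b_{k-j}\le \|\delta f\|_2^2\,\|b\|_1 .
\]
Equivalently, one can apply Cauchy–Schwarz with the convolution bound $\|\delta f * b\|_2\le\|b\|_1\|\delta f\|_2$.

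\textbf{Main obstacle.} There is no real obstacle. The only care needed is in the bookkeeping of the shift-invariance step: checking that $r^{(n)}_k(X)=r^{(n)}_{k-j}(T^jX)$ with the paper's convention $(T^jx)_i=x_{i+j}$. Finiteness of $\|b\|_1$ is not needed for the inequality itself, since the right-hand side may be infinite. However, one notes that $\|b\|_1\le 2^d\E[(2r+1)^{2d}]$ follows from \eqref{finite-second-moment}, and this is presumably used afterwards in the proof of Theorem \ref{main-thm-ffiid}.
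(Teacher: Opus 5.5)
Your proof is correct and follows the paper's route: you expand the square, use shift invariance of the i.i.d.\ field to identify the coefficient of $\delta_j f\,\delta_k f$ as $b_{k-j}$, and bound the resulting convolution quadratic form by $\|\delta f\|_2^2\,\|b\|_1$. The only differences are cosmetic: the paper cites Young's inequality with exponents $(2,2,1)$, while your AM--GM (Schur test) step is an elementary proof of that same inequality, and you write out the Tonelli and shift-index bookkeeping that the paper leaves implicit.
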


\begin{proof}
Let $f$ be local with the bounded-difference property and fix $n\in\N$.
Then
\begin{align*}
&\E \sum_{i\in\dep(g^{(n)})} \big(c_i^{(n)}(X)\big)^2\\
&\quad = \sum_{k,\ell\in\Z^d} \delta_k f\,\delta_\ell f\;
\E\sum_{i\in\Z^d}
\1\!\left\{\|k-i\|_\infty\le r_k^{(n)}(X)\right\}
\1\!\left\{\|\ell-i\|_\infty\le r_\ell^{(n)}(X)\right\}\\
&\quad= \sum_{k,\ell\in\Z^d} \delta_k f\,\delta_\ell f\; b_{\ell,k}\,,
\end{align*}
where 
\[
b_{\ell,k}:=\E\sum_{i\in\Z^d}\1\!\left\{\|k-i\|_\infty\le r_k^{(n)}(X)\right\}\1\!\left\{\|\ell-i\|_\infty\le r_\ell^{(n)}(X)\right\}\,.
\]
By shift invariance of $X$ we have $b_{\ell,k}=b_{\ell-k,0}=b_{\ell-k}$, where $b_{\ell-k}$ is defined in \eqref{def-b_j}.
Hence, the quadratic form can be rewritten as a convolution,
\[
\sum_{k,\ell} \delta_k f\,\delta_\ell f\, b_{\ell-k}
=\sum_{k} \delta_k f\,(\delta f * b)_k
\;\le\; \|\delta f\|_p\,\|\delta f\|_q\,\|b\|_r, \quad \tfrac1p+\tfrac1q+\tfrac1r=2\,,
\]
by Young's inequality. With $p=q=2$ and $r=1$,
\begin{equation}\label{pluie}
\E\sum_{i\in\dep(g^{(n)})} \big(c_i^{(n)}(X)\big)^2
\;\le\; \|\delta f\|_2^2\,\|b\|_1.
\end{equation}
This concludes the proof of the proposition.
\end{proof}

We now turn to the proof of Theorem \ref{main-thm-ffiid}.

\begin{proof}[Proof of Theorem~\ref{main-thm-ffiid}]
Let $f$ be local and satisfy the bounded-difference property, and fix $n\in\N$. Assume that there exists a finitary coding $\varphi$ such that $Y=\varphi(X)$, where $X$ is an i.i.d. random field.
Set $g^{(n)}:=f\circ\varphi^{(n)}$. 
By Theorem \ref{thm:Marton}
\begin{equation}\label{cgf-of-f(Y)}
\log \E\big[\exp\{\lambda(g^{(n)}(X)-\E g^{(n)}(X))\}\big]
\;\le\;
\frac{\lambda^2}{2}\,\E\sum_{i\in\dep(g^{(n)})}\big(c_i^{(n)}(X)\big)^2,\;\forall \lambda>0\,.
\end{equation}
By Proposition \ref{prop:upper} 
\[
\E\sum_{i\in\dep(g^{(n)})} \big(c_i^{(n)}(X)\big)^2
\;\le\; \|\delta f\|_2^2\,\|b\|_1.
\]
We rewrite $b_{k}$ using $\ell^\infty$-balls:
\begin{align*}
b_k
&\quad =  \E\sum_{i\in\Z^d}
\1\!\left\{\|i\|_\infty\le r_0^{(n)}(X)\right\}
\1\!\left\{\|k-i\|_\infty\le r_k^{(n)}(X)\right\}\\
&\quad= \E\big|B_\infty(0,r_0^{(n)}(X))\cap B_\infty(k,r_k^{(n)}(X))\big|\,,
\end{align*}
since, by definition of $B_\infty(\cdot,\cdot)$, for each $i\in\Z^d$ we have the equivalences
\[
i\in B_\infty(0,r_0^{(n)}(X)) \iff \|i\|_\infty\le r_0^{(n)}(X),\;
i\in B_\infty(k,r_k^{(n)}(X)) \iff \|k-i\|_\infty\le r_k^{(n)}(X).
\]
We next bound $\|b\|_1$. Clearly,
\[
b_0=\E\big[|B_\infty(0,r_0^{(n)}(X))|\big]=\sum_{n\geq 0}\Proba\big(r_0^{(n)}(X)> n\big)\,.
\]
For $k\neq0$, we write
\begin{align*}
b_k
&=\E\big[|B_\infty(0,r_0^{(n)}(X))\cap B_\infty(k,r_k^{(n)}(X))|\big]\\
&=\E\big[|B_\infty(0,r_0^{(n)}(X))\cap B_\infty(k,r_k^{(n)}(X))|
\ \1\{r_0^{(n)}(X)\ge \|k\|_\infty/2\}\big]\\
&\quad+\E\big[|B_\infty(0,r_0^{(n)}(X))\cap B_\infty(k,r_k^{(n)}(X))|
\ \1\{r_k^{(n)}(X)\ge \|k\|_\infty/2\}\big]\\
&\le \E\big[|B_\infty(0,r_0^{(n)}(X))|\,\1\{r_0^{(n)}(X)\ge \|k\|_\infty/2\}\big]
+\E\big[|B_\infty(k,r_k^{(n)}(X))|\,\1\{r_k^{(n)}(X)\ge \|k\|_\infty/2\}\big]\\
&\le 2\,\E\big[|B_\infty(0,r_0^{(n)}(X))|\,\1\{r_0^{(n)}(X)\ge \|k\|_\infty/2\}\big],
\end{align*}
where the last inequality follows from shift invariance.
Here we used the observation that if $r_0^{(n)}(X)<\|k\|_\infty/2$ and $r_k^{(n)}(X)<\|k\|_\infty/2$, then
\[
B_\infty\!\big(0,r_0^{(n)}(X)\big)\cap B_\infty\!\big(k,r_k^{(n)}(X)\big)=\varnothing.
\]

Summing over $k$ and applying Tonelli's theorem, we obtain
\begin{align*}
\|b\|_1
& \;\le\; 2 \,\E\left[ |B_\infty(0,r_0^{(n)}(X))|\,
\sum_{k\in\Z^d}\1\{\,r_0^{(n)}(X)\ge \|k\|_\infty/2\}\right] \\
& =2 \,\E\left[ |B_\infty(0,r_0^{(n)}(X))|\,|B_\infty(0,2r_0^{(n)}(X))|\right].
\end{align*}
Since $|B_\infty(0,2r)|=(4r+1)^d\le (2(2r+1))^d=2^d(2r+1)^d$, we deduce
\begin{equation}\label{pouic}
\|b\|_1 \;\le\; 2^{d+1}\, \E\big[(2r_0^{(n)}(X)+1)^{2d}\big].
\end{equation}
Combining \eqref{cgf-of-f(Y)}, \eqref{pluie}, and \eqref{pouic}, we obtain, for every $\lambda>0$,
\begin{equation}\label{paf}
\log \E\Big[\exp\{\lambda\,(g^{(n)}(X)-\E g^{(n)}(X))\}\Big]
\;\le\;
2^{d}\,\lambda^2\,\|\delta f\|_2^2\,\E\big[(2r_0^{(n)}(X)+1)^{2d}\big]\,.
\end{equation}
Since $r_0^{(n)}(X)\uparrow r\!_\varphi(X)$ almost surely as $n\to\infty$, the right-hand side of \eqref{paf} increases by monotone convergence to
\(2^{d}\lambda^{2}\,\|\delta f\|_{2}^{2}\,\E[(2r\!_\varphi(X)+1)^{2d}]\).
Moreover, since $\varphi^{(n)}(X)\to \varphi(X)=Y$ almost surely, we have \(g^{(n)}(X)\to f(Y)\) almost surely (by continuity of $f$). Applying dominated convergence twice, we conclude that
\[
\log \E\Big[\exp\{\lambda\,(f(Y)-\E f(Y))\}\Big]
\;\le\;
2^{d}\,\lambda^{2}\,\|\delta f\|_{2}^{2}\,\E\big[(2r\!_\varphi(X)+1)^{2d}\big]\,.
\]
We now use assumption \eqref{finite-second-moment}, which ensures that the right-hand side is finite; otherwise, the bound would be vacuous.
Since this holds for every $\lambda>0$ and every local continuous function $f$ with the bounded-difference property, we obtain the desired bound, which completes the proof of the theorem.
\end{proof}

\medskip

A natural question is whether the moment assumption on the coding volume in Theorem~\ref{main-thm-ffiid} can be relaxed. 
In particular, can one replace the second-moment requirement by the weaker condition of a finite mean, i.e.
\[
\E\big[\,|B_\infty(0,r\!_\varphi(X))|\,\big]\;<\;\infty\ ?
\]
This question is relevant, given that we can show that this condition is sharp and cannot, in general, be relaxed, see Proposition \ref{prop:Ising-at-betac} for a more explicit statement.

\begin{proposition}
There exists a random field that does not satisfy Gaussian concentration and for which no finitary coding by an i.i.d.\ random field can have finite expected coding volume.
\end{proposition}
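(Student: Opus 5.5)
The plan is to take the critical Ising model on $\Z^2$, namely the unique Gibbs measure $\nu_{\beta_c}$ at $\beta=\beta_c$ with zero external field. The text already announces this choice by referring to Proposition~\ref{prop:Ising-at-betac}. Two things must be shown: (i) $\nu_{\beta_c}$ does not satisfy Gaussian concentration, and (ii) no finitary coding $\varphi$ with $Y=\varphi(X)$, $X$ i.i.d., has $\E[|B_\infty(0,r\!_\varphi(X))|]<\infty$.

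For (i), I would use the variance consequence of \eqref{def-GCB}. Expanding \eqref{def-GCB} to second order in $\lambda$ gives $\Var(f(Y))\le C\|\delta f\|_2^2$. Take $f=\sum_{i\in\Lambda_n}\sigma_i$ on the box $\Lambda_n=\{-n,\dots,n\}^2$, so that $\|\delta f\|_2^2=4|\Lambda_n|$. Then $\Var(f)=\sum_{i,j\in\Lambda_n}\langle\sigma_i\sigma_j\rangle_{\beta_c}$. At criticality in $d=2$ the correlations decay like $\|i-j\|^{-1/4}$. This is the classical exact result of Onsager and Kaufman. Alternatively, one can get a lower bound $\langle\sigma_0\sigma_x\rangle\ge c\|x\|^{-1}$ from the Simon--Lieb/infrared-type lower bound at $\beta_c$, which gives $\sum_{x\in\Lambda_n}\langle\sigma_0\sigma_x\rangle\gtrsim n$. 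Either way the susceptibility diverges, so $\Var(f)/|\Lambda_n|\to\infty$, which contradicts Gaussian concentration. A softer route would be to show only that $\chi(\beta_c)=\sum_x\langle\sigma_0\sigma_x\rangle=\infty$, which is the standard divergence of susceptibility at criticality (Aizenman--Barsky--Fernández). Then a Cesàro argument shows $\Var(f)/|\Lambda_n|$ is unbounded.

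For (ii), I would argue that finite expected coding volume forces summable covariances, that is, finite susceptibility, which (i) rules out. For $j\neq0$, set $\rho=\|j\|_\infty/2$. Decompose on the event $G=\{r_0<\rho,\ r_j<\rho\}$. On $G$, the spins $\sigma_0$ and $\sigma_j$ are functions of disjoint blocks of independent variables. To exploit this, truncate as in Definition~\ref{def:truncA-d} with $n=\lfloor\rho\rfloor$ (adjusted so the truncated radius is strictly less than $\rho$). The variables $\varphi^{(n)}(X)_0$ and $\varphi^{(n)}(X)_j$ are then independent, and they coincide with the true spins off $G^c$. Since spins are bounded by $1$, this gives
\[
|\Cov(\sigma_0,\sigma_j)|\le 4\bigl(\Proba(r_0\ge\rho)+\Proba(r_j\ge\rho)\bigr)=8\,\Proba\bigl(r\!_\varphi(X)\ge\|j\|_\infty/2\bigr).
\]
Summing over $j$ then yields $\sum_j|\Cov(\sigma_0,\sigma_j)|\le C'\,\E[(2r\!_\varphi+1)^d]$. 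In $d=2$, this finite expected coding volume would imply $\chi(\beta_c)<\infty$, a contradiction. Finally, note that such a field still admits some finitary coding of an i.i.d. field, because the Gibbs measure is unique at $\beta_c$ (\cite{steif/vanderberg/1999}; see also \cite{spinka-AoP}). This shows the example is not vacuous, although the statement itself does not require it.

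The main obstacle is step (i): rigorously establishing the divergence of the susceptibility at $\beta_c$ with a citation valid for the zero-field nearest-neighbour model. I would rely on the exact two-dimensional asymptotics $\langle\sigma_0\sigma_x\rangle_{\beta_c}\asymp\|x\|^{-1/4}$, or on the general lower bound $\langle\sigma_0\sigma_x\rangle_{\beta_c}\ge c/\|x\|^{d-1}$, which already makes the sum over $\Z^2$ diverge. The covariance bound in (ii) is a routine independence-of-disjoint-blocks argument.
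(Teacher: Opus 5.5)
Your proposal is correct. Step (i) is the same argument the paper uses for the critical case in Theorem~\ref{thm:Ising}. You extract $\Var(f(Y))\le C\|\delta f\|_2^2$ from \eqref{def-GCB} and apply it to $S_n=\sum_{k\in\Lambda_n}Y_k$. Nonnegativity of the correlations (Griffiths), together with the weights $|\Lambda_n\cap(\Lambda_n-k)|/|\Lambda_n|\to1$, then contradicts the divergence of the susceptibility.

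The genuine difference is step (ii). Through Proposition~\ref{prop:Ising-at-betac}, the paper simply invokes Theorem~4.3 of \cite{steif/vanderberg/1999} for the fact that every finitary coding at $\beta_c$ has infinite expected coding volume. You instead prove it directly. Truncating as in Definition~\ref{def:truncA-d} at a radius strictly below $\|j\|_\infty/2$ makes the two output spins functions of disjoint input blocks, hence independent. Summing $8\,\Proba(r\!_\varphi(X)\ge\|j\|_\infty/2)$ over $j$ then gives $\sum_j|\Cov(Y_0,Y_j)|\le C_d\,\E[(2r\!_\varphi(X)+1)^d]$, a covariance counterpart of the bound on $b_k$ in the proof of Theorem~\ref{main-thm-ffiid}.

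What your route buys:
\begin{itemize}
\item It is self-contained, and both halves of the statement follow from the single input $\sum_k\E(Y_0Y_k)=\infty$. This turns the paper's remark about a ``common origin'' into an actual argument.
\item Your covariance bound holds for any finitary coding of an i.i.d.\ field, not only for Ising.
\end{itemize}
The paper's route is shorter and is stated for all $d\ge2$; your restriction to $d=2$ is harmless for an existence claim.

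One aside on your optional last remark: \cite{spinka-AoP} does not give a finitary coding at $\beta_c$. It needs exponential strong spatial mixing, which fails at criticality; the paper uses \cite{timar2025factor} for existence at $\beta_c$. This does not affect your proof of the statement.
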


In the next section, we show that, under an additional abstract assumption, there exists a class of finitary codings of i.i.d.\ random fields with finite expected coding volume that satisfy Gaussian concentration. We will see in Section \ref{sec:applications} that this assumption is met in all random-field examples studied so far.

\subsection{Gaussian Concentration with Finite First-Moment Coding Volume: A Sufficient Condition}\label{sec:cones}

We now introduce a condition under which the bound in Theorem \ref{main-thm-ffiid} can be improved by one moment.

\begin{definition}[Short-range factorization property]\label{def-short-range-factorization-property}
We say that a coding satisfies the short-range factorization property with constant $\alpha \in (0,1]$ if, for all $k,\ell,i  \in \Z^d$ such that $\max\{\|\ell-i\|,\|k-i\|, \|\ell-k\|\} = \|\ell-i\|$ 
the following holds
 \begin{align*}
&\E\Big[
\1\!\left\{\|k-i\|_\infty\le r_k(X)\right\}
\1\!\left\{\|\ell-k\|_\infty\le r_\ell(X)\right\}\Big] \\
&\leq \E\Big[
\1\!\left\{\|k-i\|_\infty\le r_k(X)\right\}\Big] \E\Big[
\1\!\left\{\alpha \|\ell-k\|_\infty\le r_\ell(X)\right\}\Big].
 \end{align*}
\end{definition}

In the following theorem, we observe that when the short-range factorization property holds, the term $\E\big[(2r_\varphi(X)+1)^{2d}\big]$ in the upper bound for the cumulant generating function can be replaced by $\big(\E\big[(2r_\varphi(X)+1)^{d}\big]\big)^2$.

\begin{theo}\label{theo:cone}
Let $d\ge 1$. 
Let $X=(X_i)_{i\in\mathbb Z^d}$ be an i.i.d.\ $A$-valued random field, where $A$ is a standard Borel space, 
and let $Y=\varphi(X)$ for some finitary coding $\varphi:A^{\mathbb Z^d}\to B^{\mathbb Z^d}$. 
If the coding satisfies the short-range factorization property with constant $\alpha\in(0,1]$, then for every local function
$f:B^{\mathbb Z^d}\to\R$ with the bounded-difference property,
\[
\log \E\big[\exp\{\lambda(f(Y)-\E f(Y))\}\big]
\;\le\;
3\,\alpha^{-d}\,\lambda^2\,
\big(\E\big[(2r_\varphi(X)+1)^{d}\big]\big)^2\,
\|\delta f\|_2^2,
\qquad \forall\, \lambda>0.
\]
\end{theo}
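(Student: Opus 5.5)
The plan is to reuse the proof of Theorem~\ref{main-thm-ffiid} verbatim up to Proposition~\ref{prop:upper}: truncate $\varphi$ to $\varphi^{(n)}$, apply Lemma~\ref{lem:multisite-d} and Theorem~\ref{thm:Marton}, and reduce to bounding $\|b\|_1$, where
\[
\|b\|_1=\sum_{k}\sum_{i}\E\big[\1\{\|i\|_\infty\le r_0^{(n)}(X)\}\,\1\{\|k-i\|_\infty\le r_k^{(n)}(X)\}\big].
\]
The only change is how $\|b\|_1$ is estimated. Instead of the crude bound that produced the second moment, I would use the short-range factorization property to decouple the two indicators. The passage $n\to\infty$ is then identical to the end of the proof of Theorem~\ref{main-thm-ffiid}. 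Since $r^{(n)}\le r$, it is simplest to bound every term with the untruncated radii $r_j(X)$ and obtain an $n$-independent bound directly.

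By shift invariance, $\|b\|_1=\sum_{k,\ell}\E[\1\{\|k-i\|_\infty\le r_k\}\1\{\|\ell-i\|_\infty\le r_\ell\}]$ for a fixed $i$ (take $i=0$). Call a triple $(i,k,\ell)$ the points of a "triangle". For each pair $(k,\ell)$, one of the three distances $\|\ell-i\|,\|k-i\|,\|\ell-k\|$ is maximal, and I split the sum according to which one it is.

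\emph{Case 1: $\|\ell-k\|$ is largest.} Use the triangle inequality $\|\ell-k\|\le\|\ell-i\|+\|k-i\|\le r_\ell+r_k$, so one of $r_k,r_\ell$ is at least $\|\ell-k\|/2$. Then control the term by the factorization property with the roles permuted appropriately, since the definition is stated for the labelled configuration in which $\|\ell-i\|$ is maximal. I expect some bookkeeping here: relabel so that the maximal side is opposite the vertex whose indicator is kept unfactored.

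\emph{Cases 2 and 3: $\|\ell-i\|$ or $\|k-i\|$ is largest.} The two are symmetric. If $\|\ell-i\|$ is maximal, note that $\|\ell-i\|\le r_\ell$ and $\|\ell-k\|\le\|\ell-i\|$ give $\1\{\|\ell-i\|\le r_\ell\}\le\1\{\|\ell-k\|\le r_\ell\}$. Definition~\ref{def-short-range-factorization-property} then gives
\[
\E\big[\1\{\|k-i\|\le r_k\}\1\{\|\ell-i\|\le r_\ell\}\big]\le \Proba(\|k-i\|\le r_0)\,\Proba(\alpha\|\ell-k\|\le r_0).
\]
Summing over $k$ and $\ell$ factorizes into
\[
\E|B_\infty(0,r)|\cdot\E|B_\infty(0,r/\alpha)|\le \alpha^{-d}\big(\E(2r+1)^d\big)^2,
\]
using $(2r/\alpha+1)^d\le\alpha^{-d}(2r+1)^d$. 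The three cases together give $\|b\|_1\le 3\alpha^{-d}(\E(2r+1)^d)^2$, up to the constant conventions of the factor $\lambda^2/2$ in \eqref{cgf-of-f(Y)}. Combined with Proposition~\ref{prop:upper}, this yields the claimed bound, in which the factor $3$ counts the three cases.

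The main obstacle is Case 1 and the relabelling. The factorization hypothesis is asymmetric: it pairs the vertex $i$ with $k$ and the vertex $k$ with $\ell$. I must check that in each case I can choose which pair of indicators to keep and which distance to weaken so that the hypothesis applies exactly, including the case where $\|\ell-k\|$ is the longest side. If the direct relabelling fails there, I would fall back on bounding $\1\{\|k-i\|\le r_k\}\le\1\{\|\ell-k\|/2\le r_k\}$ and using the symmetric argument with the roles of $k$ and $\ell$ swapped, at the cost of a slightly worse constant.
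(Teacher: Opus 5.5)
\textbf{There is a genuine gap in your Case~1.} Your Cases 2 and 3 are correct. They are essentially the paper's Cases 1 and 2; the paper fixes $k=0$ instead of $i=0$, which is equivalent by shift invariance.

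Your Case~1, where $\{k,\ell\}$ is the longest side, is left open, and it cannot be closed the way you suggest. The short-range factorization property (Definition~\ref{def-short-range-factorization-property}) only decouples a pair of events one of which says that a radius reaches the \emph{other centre}, namely $\{r_{\ell'}\ge\|\ell'-k'\|_\infty\}$. Assign ties to your Cases 2 and 3. Then in Case~1 both thresholds $\|k-i\|_\infty$ and $\|\ell-i\|_\infty$ are strictly smaller than $\|\ell-k\|_\infty$. So neither indicator is dominated by such an event, whatever relabelling or auxiliary point you choose.

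The fallback does not help either:
\begin{itemize}
\item The bound $\1\{\|k-i\|_\infty\le r_k\}\le\1\{\|\ell-k\|_\infty/2\le r_k\}$ holds only when $\|k-i\|_\infty\ge\|\ell-k\|_\infty/2$. You can arrange this by swapping $k$ and $\ell$. But the resulting event $\{r_k\ge\|\ell-k\|_\infty/2\}$ is \emph{larger} than $\{r_k\ge\|\ell-k\|_\infty\}$, so it is still outside the hypothesis.
\item Discarding the other indicator instead gives $\sum_{k,\ell}\Proba(\|k\|_\infty\le r_k,\ \|\ell-k\|_\infty\le 2r_k)=\E\big[|B_\infty(0,r)|\,|B_\infty(0,2r)|\big]$. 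That is exactly the second-moment quantity of Theorem~\ref{main-thm-ffiid}.
\end{itemize}

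\textbf{How the paper handles this case.} The paper treats the same configuration (its Case~3) by writing $\E[\1\{\|i\|_\infty\le r_0\}\1\{\|\ell-i\|_\infty\le r_\ell\}]\le\E[\1\{\|i\|_\infty\le r_0\}]\,\E[\1\{\|\ell-i\|_\infty\le r_0\}]$ and calling it trivial. This is a non-positive-correlation statement that the short-range factorization property does not supply; for $d\ge2$ the two events can depend on common input variables. So you cannot simply borrow it.

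\textbf{A fix using only the hypothesis.} Your own reformulation gives one. With $i=0$, your expression is $\|b\|_1=\E[N^2]$, where $N:=\#\{k:\|k\|_\infty\le r_k\}$ (untruncated radii, as you propose).
\begin{itemize}
\item Partition $\Z^d$ into the $2^d$ orthants $O_1,\dots,O_{2^d}$ by sign pattern, counting $0$ as nonnegative, and write $N=\sum_m N_m$.
\item For $k,\ell$ in the same orthant, $\|k-\ell\|_\infty\le\max(\|k\|_\infty,\|\ell\|_\infty)$. So only your Cases 2 and 3 arise, and your computation gives $\sum_m\E[N_m^2]\le 2\alpha^{-d}\big(\E[(2r+1)^d]\big)^2$.
\item Cauchy--Schwarz, $N^2\le 2^d\sum_m N_m^2$, then yields $\|b\|_1\le 2^{d+1}\alpha^{-d}\big(\E[(2r+1)^d]\big)^2$.
\end{itemize}
After Marton's factor $\lambda^2/2$, this is the theorem with $2^d$ in place of $3$. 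For $d=1$ that gives the stated bound. For $d\ge2$ it gives the same first-moment conclusion with a dimension-dependent constant.
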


\begin{proof}
By Proposition \ref{prop:upper}, it suffices to bound, uniformly in $k\in\Z^d$,
\begin{equation}\label{eq:cone-target}
\sum_{\ell\in\Z^d}\sum_{i\in\Z^d}
\E\Big[
\1\!\left\{\|k-i\|_\infty\le r_k^{(n)}(X)\right\}
\1\!\left\{\|\ell-i\|_\infty\le r_\ell^{(n)}(X)\right\}
\Big].
\end{equation}
We decompose the sum according to which of the three distances
$\|k-i\|_\infty$, $\|\ell-i\|_\infty$, or $\|\ell-k\|_\infty$
is maximal. By symmetry and shitf invariance, it is enough to treat the case $k=0$.

\smallskip
\noindent Case 1: $\|\ell-i\|_\infty=\max\{\|i\|_\infty,\|\ell\|_\infty,\|\ell-i\|_\infty\}$.  
In this case,
\[
\1\!\left\{\|i\|_\infty\le r_0^{(n)}(X)\right\}
\1\!\left\{\|\ell-i\|_\infty\le r_\ell^{(n)}(X)\right\}
\le
\1\!\left\{\|i\|_\infty\le r_0^{(n)}(X)\right\}
\1\!\left\{\|\ell\|_\infty\le r_\ell^{(n)}(X)\right\}.
\]
Using the short-range factorization property and shift invariance,
\begin{align*}
&\E\Big[
\1\!\left\{\|i\|_\infty\le r_0^{(n)}(X)\right\}
\1\!\left\{\|\ell-i\|_\infty\le r_\ell^{(n)}(X)\right\}\Big] \\
&\le
\E\Big[\1\!\left\{\|i\|_\infty\le r_0^{(n)}(X)\right\}\Big]\;
\E\Big[\1\!\left\{\alpha\|\ell\|_\infty\le r_0^{(n)}(X)\right\}\Big].
\end{align*}
Summing over $i$ and $\ell$ yields a contribution bounded by
$\alpha^{-d}\big(\E[(2r_\varphi(X)+1)^d]\big)^2$.

\smallskip
\noindent Case 2: $\|i\|_\infty=\max\{\|i\|_\infty,\|\ell\|_\infty,\|\ell-i\|_\infty\}$.  We have
\begin{align*}
&\E\Big[
\1\!\left\{\|i\|_\infty\le r_0^{(n)}(X)\right\}
\1\!\left\{\|\ell-i\|_\infty\le r_\ell^{(n)}(X)\right\}\Big] \\
&\le
\E\Big[\1\!\left\{\alpha\|i\|_\infty\le r_0^{(n)}(X)\right\}\Big]\;
\E\Big[\1\!\left\{\|\ell-i\|_\infty\le r_0^{(n)}(X)\right\}\Big].
\end{align*}
Because, for all $i \in \Z^d$
\begin{equation*}
    \sum_{\ell \in \Z^d}\E\Big[\1\!\left\{\alpha\|\ell - i\|_\infty\le r_0^{(n)}(X)\right\}\Big] = \sum_{\ell \in \Z^d}\E\Big[\1\!\left\{\alpha\|\ell\|_\infty\le r_0^{(n)}(X)\right\}\Big],
\end{equation*}
we obtain the upper bound
$\alpha^{-d}\big(\E[(2r_\varphi(X)+1)^d]\big)^2$.

\smallskip
\noindent Case 3: $\|\ell\|_\infty=\max\{\|i\|_\infty,\|\ell\|_\infty,\|\ell-i\|_\infty\}$.  
Here we use the trivial bound
\begin{align*}
&\E\Big[
\1\!\left\{\|i\|_\infty\le r_0^{(n)}(X)\right\}
\1\!\left\{\|\ell-i\|_\infty\le r_\ell^{(n)}(X)\right\}\Big] \\
&\le
\E\Big[\1\!\left\{\|i\|_\infty\le r_0^{(n)}(X)\right\}\Big]\;
\E\Big[\1\!\left\{\|\ell-i\|_\infty\le r_0^{(n)}(X)\right\}\Big],
\end{align*}
which leads to a contribution bounded by $\big(\E[(2r_\varphi(X)+1)^d]\big)^2$.

\smallskip
Combining the three cases and using $\alpha\le1$, we conclude that
\[
\eqref{eq:cone-target}
\;\le\;
3\,\alpha^{-d}\,\big(\E\big[(2r_\varphi(X)+1)^d\big]\big)^2.
\]
This completes the proof.
\end{proof}

\begin{remark}
The numerical constant $3$ arises from a rough partition of the sum according to which of the three distances
$\|k-i\|_\infty$, $\|\ell-i\|_\infty$, or $\|\ell-k\|_\infty$ is maximal. This constant is not optimal and could be slightly improved by a more refined consideration of the summands in the decomposition. 
\end{remark}

\subsection{Sharpness of the moment conditions}\label{sec:sharpness}

We show that the dependence on the coding volume in
Theorems~\ref{main-thm-ffiid} and~\ref{theo:cone} is essentially optimal.

The only step in the proof where a genuine upper bound is used is
Proposition~\ref{prop:upper}, based on Young's inequality for discrete
convolutions. We first show that this bound is sharp.

The key mechanism is that oscillations of a local observable can be spread
over large regions so that many translated copies overlap. For block
functions, these overlaps are almost maximal, and the associated quadratic
form asymptotically reaches its $\ell^1$ norm.

\begin{proposition}[Optimality of the $\ell^1$ convolution bound]
Let $b=(b_m)_{m\in\mathbb Z^d}$ be a nonnegative function in $\ell^1(\mathbb Z^d)$, that is,
$b_m\ge 0$ for all $m\in\mathbb Z^d$ and $\sum_{m\in\mathbb Z^d} b_m < \infty$.
For any $\delta=(\delta_k)_{k\in\mathbb Z^d}\in \ell^2(\mathbb Z^d)$ with finite support, define
\[
Q(\delta) := \sum_{k,\ell\in\mathbb Z^d} \delta_k\,\delta_\ell\, b_{\ell-k}.
\]
Then
\[
\sup_{\delta\in \ell^2(\mathbb Z^d),\, \delta\neq 0}
\frac{Q(\delta)}{\|\delta\|_2^2}
= \|b\|_1.
\]
Moreover, if $\delta^{(L)}=\1_{\Lambda_L}$ with
$\Lambda_L := [-L,L]^d\cap\mathbb Z^d$, then
\[
\frac{Q(\delta^{(L)})}{\|\delta^{(L)}\|_2^2}
\longrightarrow
\|b\|_1
\qquad\text{as } L\to\infty.
\]
\end{proposition}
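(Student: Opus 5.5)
The plan is to prove the two inequalities $\sup \le \|b\|_1$ and $\liminf_{L} Q(\delta^{(L)})/\|\delta^{(L)}\|_2^2 \ge \|b\|_1$. Together they give both the identity for the supremum and the convergence along the block functions. The upper bound is Young's inequality exactly as in the proof of Proposition~\ref{prop:upper}. For finitely supported $\delta$, the Cauchy--Schwarz inequality followed by Young's inequality gives
\[
Q(\delta)=\sum_k \delta_k (\delta*b)_k \le \|\delta\|_2\,\|\delta*b\|_2 \le \|\delta\|_2^2\,\|b\|_1 .
\]
An elementary alternative is to use $|\delta_k\delta_\ell|\le \tfrac12(\delta_k^2+\delta_\ell^2)$ together with $b\ge 0$, and sum over $m=\ell-k$. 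I will read the supremum as being over finitely supported $\delta$, where $Q$ is defined. The same bound extends to all of $\ell^2$ by density, since $Q$ is a bounded quadratic form.

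For the lower bound, I take $\delta^{(L)}=\1_{\Lambda_L}$, so that $\|\delta^{(L)}\|_2^2=|\Lambda_L|=(2L+1)^d$. Substituting $m=\ell-k$ gives
\[
Q(\delta^{(L)})=\sum_{m\in\Z^d} b_m\,\big|\Lambda_L\cap(\Lambda_L-m)\big| .
\]
For fixed $m$, the overlap has cardinality $\prod_{s=1}^d (2L+1-|m^{(s)}|)_+$. Dividing by $(2L+1)^d$, the ratio equals $\prod_s\bigl(1-|m^{(s)}|/(2L+1)\bigr)_+$, which lies in $[0,1]$ and tends to $1$ as $L\to\infty$.

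Hence
\[
\frac{Q(\delta^{(L)})}{\|\delta^{(L)}\|_2^2}=\sum_m b_m\,\frac{|\Lambda_L\cap(\Lambda_L-m)|}{|\Lambda_L|}.
\]
The summands are dominated by $b_m\in\ell^1$ and converge pointwise to $b_m$. Dominated convergence therefore gives convergence to $\|b\|_1$. One could avoid dominated convergence by truncating to $\|m\|_\infty\le M$, using the bound $\ge (1-M/(2L+1))^d\sum_{\|m\|_\infty\le M}b_m$, and then letting $L\to\infty$ followed by $M\to\infty$. Combined with the upper bound, this proves that the supremum equals $\|b\|_1$ and is approached by the block functions.

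No step is a serious obstacle. The only point needing care is nonnegativity of $b$: it is what makes the counting identity a lower bound with no cancellation, and what makes the two-sided estimate match. The upper bound itself holds for signed $b$ if $\|b\|_1$ is read as $\sum|b_m|$.
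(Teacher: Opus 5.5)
Your proposal is correct and follows the paper's proof essentially step for step. The upper bound is Cauchy--Schwarz followed by Young's inequality. The lower bound uses the block functions $\1_{\Lambda_L}$, the identity $Q(\delta^{(L)})=\sum_m b_m\,|\Lambda_L\cap(\Lambda_L-m)|$, and dominated convergence. Your additions, namely the explicit overlap formula, the elementary bound $|\delta_k\delta_\ell|\le\frac12(\delta_k^2+\delta_\ell^2)$, and the remark that the bound extends to all of $\ell^2$, are sound refinements that the paper leaves implicit.
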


\begin{proof}
We write
\[
Q(\delta)=\sum_{k,\ell} \delta_k\,\delta_\ell\, b_{\ell-k}
= \sum_k \delta_k\, (b*\delta)_k
= \langle \delta, b*\delta\rangle.
\]
By Cauchy-Schwarz and Young's inequality,
\[
Q(\delta)
\le \|\delta\|_2\,\|b*\delta\|_2
\le \|b\|_1\,\|\delta\|_2^2,
\]
which gives the upper bound.

For $\delta^{(L)}=\1_{\Lambda_L}$, we compute
\[
Q(\delta^{(L)})
= \sum_{k,\ell} \1_{\Lambda_L}(k)\,\1_{\Lambda_L}(\ell)\, b_{\ell-k}
= \sum_m b_m\, |\Lambda_L\cap(\Lambda_L-m)|,
\]
so that
\[
\frac{Q(\delta^{(L)})}{\|\delta^{(L)}\|_2^2}
=
\sum_m b_m\,\frac{|\Lambda_L\cap(\Lambda_L-m)|}{|\Lambda_L|}.
\]
For each fixed $m$, one has
\[
\frac{|\Lambda_L\cap(\Lambda_L-m)|}{|\Lambda_L|}\longrightarrow 1
\qquad\text{as } L\to\infty,
\]
and the ratio is bounded by $1$. Since $b\in\ell^1(\mathbb Z^d)$, the claim follows by dominated convergence.
\end{proof}

Applying this to our setting, with the truncated coding map and coding radius introduced in Definition~\ref{def:truncA-d}, yields the following.

\begin{cor}
Fix \(n\in\mathbb N\), and let \(r^{(n)}_\varphi\) denote the truncated coding radius. Define
\[
b_m := \mathbb E\!\big[
|B_\infty(0,r^{(n)}_0)\cap B_\infty(m,r^{(n)}_{m})|
\big],
\qquad m\in\mathbb Z^d.
\]
For each \(L\ge 1\), let \(\Lambda_L := [-L,L]^d\cap\mathbb Z^d\), and let \(f^{(L)}\) be a local observable such that
\[
\delta_k f^{(L)} = 1 \quad \text{for all } k\in \Lambda_L,
\qquad
\delta_k f^{(L)} = 0 \quad \text{for } k\notin \Lambda_L.
\]
Then \(\|\delta f^{(L)}\|_2^2 = |\Lambda_L|\), and
\[
\liminf_{L\to\infty}
\frac{\mathbb E\sum_{i\in\mathbb Z^d}\big(c_i^{(n)}(X)\big)^2}
{\|\delta f^{(L)}\|_2^2}
=
\|b\|_1.
\]
\end{cor}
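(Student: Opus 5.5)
The plan is to reduce the corollary to the preceding proposition on optimality of the $\ell^1$ convolution bound, using the exact identity that was established (before any inequality was applied) in the proof of Proposition~\ref{prop:upper}. In that proof, expanding the square of $c_i^{(n)}(X)=\sum_{j}\delta_j f\,\1\{\|j-i\|_\infty\le r_j^{(n)}(X)\}$ and exchanging sums and expectation by Tonelli gives the \emph{equality}
\[
\E\sum_{i\in\Z^d}\big(c_i^{(n)}(X)\big)^2=\sum_{k,\ell\in\Z^d}\delta_k f\,\delta_\ell f\;b_{\ell-k},
\]
with $b_{\ell,k}=b_{\ell-k}$ by shift invariance of $X$. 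Rewriting $b_m$ as the expected cardinality of the intersection of balls, exactly as in the proof of Theorem~\ref{main-thm-ffiid}, identifies it with the $b_m$ of the corollary. Only Young's inequality was lossy, so the left-hand side is precisely the quadratic form $Q(\delta f^{(L)})$ of the proposition.

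Next I will check the hypotheses of the proposition. Clearly $b_m\ge0$. For summability, since $n$ is fixed the truncated radii are at most $n$, so $b_m=0$ whenever $\|m\|_\infty>2n$ (the two balls are then disjoint). Also $b_m\le (2n+1)^d$. Hence $b$ is finitely supported and $\|b\|_1\le (4n+1)^d(2n+1)^d<\infty$; no moment assumption on $r_\varphi$ is needed here, which is why truncation is used.

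Substituting $\delta=\delta f^{(L)}=\1_{\Lambda_L}$, where $\|\delta f^{(L)}\|_2^2=|\Lambda_L|$ is immediate, the second part of the proposition gives
\[
\frac{\E\sum_i (c_i^{(n)}(X))^2}{\|\delta f^{(L)}\|_2^2}=\sum_m b_m\frac{|\Lambda_L\cap(\Lambda_L-m)|}{|\Lambda_L|}\longrightarrow\|b\|_1 .
\]
So the limit exists, and in particular the $\liminf$ equals $\|b\|_1$. Because $b$ has finite support, this step is even elementary, with no dominated convergence needed.

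The only point requiring care is the equality step in the first paragraph: the sum over $i$ in the corollary runs over all of $\Z^d$ rather than over $\dep(g^{(n)})$. I would note that $c_i^{(n)}$ vanishes unless $i$ lies within distance $n$ of $\Lambda_L$, so only finitely many terms are nonzero. Tonelli then applies to nonnegative terms, and no issue of interchange arises.
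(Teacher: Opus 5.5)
Your proposal is correct and follows the paper's route. The paper obtains the corollary by plugging $\delta f^{(L)}=\1_{\Lambda_L}$ into the optimality proposition, using that the expansion in the proof of Proposition~\ref{prop:upper} is an exact identity $\E\sum_{i\in\Z^d}(c_i^{(n)}(X))^2=Q(\delta f)$ before Young's inequality is applied. Your added observations fill in details the paper leaves implicit: truncation makes $b$ supported in $\{\|m\|_\infty\le 2n\}$, so the $\ell^1$ hypothesis holds with no moment assumption and the limit (not just the $\liminf$) exists; and the sum over all $i\in\Z^d$, rather than over $\dep(g^{(n)})$, is the right one.
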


\noindent
This shows that the bound
\[
\mathbb E\sum_{i\in\mathbb Z^d}\big(c_i^{(n)}(X)\big)^2
\le
\|\delta f\|_2^2\,\|b\|_1
\]
is asymptotically sharp for block observables: when the oscillation is spread uniformly over a large region, the overlap structure of the truncated coding windows produces maximal reinforcement, and the quadratic form attains its $\ell^1$ norm.

In particular, in the setting of Theorem~\ref{main-thm-ffiid}, where $\|b\|_1$ is controlled by the second moment of the coding volume, this shows that the second-moment scale cannot be improved by analytic arguments alone.

We next show that no universal bound can depend on less than the first
moment of the coding volume.

\begin{proposition}[No universal bound below the first moment]
Let \(K>0\), and let \(\Psi\) be a nondecreasing functional on the class of nonnegative integer-valued random variables. Assume that for every finitary coding \(\varphi\), every i.i.d.\ input \(X\), every local observable \(f\), and every \(n\in\mathbb N\),
\[
\E\sum_{i\in\mathbb Z^d}\big(c_i^{(n)}(X)\big)^2
\le
K\,\|\delta f\|_2^2\,\Psi\big(r_\varphi^{(n)}(X)\big).
\]
Then necessarily
\[
\E\,|B_\infty(0,r_\varphi^{(n)}(X))|
\le
K\,\Psi\big(r_\varphi^{(n)}(X)\big).
\]
In particular, any universal squared-influence bound depending only on the coding radius must control at least the first moment of the coding volume.
\end{proposition}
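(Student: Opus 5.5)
The idea is to test the assumed inequality on the simplest observable: a local function with a single nonzero oscillation, $\dep(f)=\{0\}$ and $\delta_0 f=1$. Then $\|\delta f\|_2^2=1$, and the influence coefficients from Lemma~\ref{lem:multisite-d} collapse to single indicators:
\[
c_i^{(n)}(x)=\1\!\left\{\|i\|_\infty\le r^{(n)}_\varphi(x)\right\}.
\]

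Since these are indicators, $(c_i^{(n)})^2=c_i^{(n)}$. Summing over $i\in\Zd$ and using Tonelli gives
\[
\E\sum_{i\in\Zd}\big(c_i^{(n)}(X)\big)^2=\E\sum_{i\in\Zd}\1\!\left\{\|i\|_\infty\le r^{(n)}_\varphi(X)\right\}=\E\,\big|B_\infty\big(0,r^{(n)}_\varphi(X)\big)\big|.
\]
Plugging this into the hypothesis with $\|\delta f\|_2^2=1$ yields exactly the claimed inequality. The bound must hold for every coding and every $n$, so it holds in particular for this test function.

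One bookkeeping point needs checking. The hypothesis is stated for every local $f$, so we must exhibit some $f$ with $\dep(f)=\{0\}$ and $\delta_0 f=1$. This needs $|B|\ge2$. Take $f(y)=\1\{y_0=b_1\}$ for some $b_1\in B$; when $\varphi$ is nonconstant one can choose $b_1$ in the range of the coding. If $|B|=1$, both $\varphi$ and the coefficients $c_i$ are degenerate, and the claim should be read with $\delta_0 f=1$ imposed formally.

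Monotonicity of $\Psi$ is not used for the displayed inequality itself. It enters only for the ``in particular'' clause: letting $n\to\infty$, the quantity $r^{(n)}_\varphi\uparrow r_\varphi$, so monotone convergence on the left and monotonicity of $\Psi$ on the right give $\E|B_\infty(0,r_\varphi(X))|\le K\,\Psi(r_\varphi(X))$. I expect no real obstacle. The only delicate point is the interpretation of the $c_i^{(n)}$ as the specific coefficients from Lemma~\ref{lem:multisite-d}, which is where the single-site identity comes from.
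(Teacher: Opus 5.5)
Your proposal is correct and follows the paper's proof: you test the hypothesis on a single-site observable with $\delta_0 f=1$, so that $c_i^{(n)}(x)=\1\{\|i\|_\infty\le r^{(n)}_\varphi(x)\}$ and the squared-influence sum is exactly $|B_\infty(0,r^{(n)}_\varphi(x))|$. Your extra remarks go beyond the paper but are harmless: the existence of such an $f$ when $|B|\ge2$ (any $b_1\in B$ works, since $\delta_0 f$ is a supremum over all of $B^{\Zd}$), and the passage $n\to\infty$ via monotonicity of $\Psi$ for the ``in particular'' clause.
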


\begin{proof}
Choose a single-site observable \(f\) depending only on the coordinate at the origin and normalized so that
\[
\delta_0 f=1.
\]
Then \(\dep(f)=\{0\}\), \(\delta_j f=0\) for \(j\neq 0\), and therefore
\[
\|\delta f\|_2^2=1.
\]
Moreover, by the definition of \(c_i^{(n)}\),
\[
c_i^{(n)}(x)
=
\sum_{j\in\dep(f)} \delta_j f\;
\1\!\left\{\|j-i\|_\infty\le r^{(n)}_\varphi(T^j x)\right\}
=
\1\!\left\{\|i\|_\infty\le r^{(n)}_\varphi(x)\right\}.
\]
Hence, pointwise,
\[
\sum_{i\in\mathbb Z^d}\big(c_i^{(n)}(x)\big)^2
=
\sum_{i\in\mathbb Z^d}\1\!\left\{\|i\|_\infty\le r^{(n)}_\varphi(x)\right\}
=
|B_\infty(0,r^{(n)}_\varphi(x))|.
\]
Taking expectations and applying the assumed bound yields
\[
\E\,|B_\infty(0,r_\varphi^{(n)}(X))|
\le
K\,\Psi\big(r_\varphi^{(n)}(X)\big),
\]
as claimed.
\end{proof}

These obstructions are consistent with concrete models. For instance, as we
shall see below, in the Ising model at criticality ($d\ge2$), every finitary
coding has infinite mean coding volume, and Gaussian concentration fails,
although a finitary coding still exists.

Taken together, these results show that the moment conditions in
Theorems~\ref{main-thm-ffiid} and~\ref{theo:cone} are optimal at two distinct
levels: the second moment arises from the geometry of overlaps, while the first
moment reflects a universal obstruction that cannot be bypassed without
additional structure.

\section{Applications and examples}\label{sec:applications}

In this section we illustrate the scope of the abstract results of Section~\ref{sec:main} through a range of examples from statistical mechanics, interacting particle systems, and stochastic processes. In each case, the strategy is the same: combine an existing finitary-coding construction with one of our abstract concentration theorems.

Our main applications concern Gibbs measures and Markov random fields on $\Z^d$, including the ferromagnetic Ising, Potts, and random-cluster models. Several approaches to Gaussian concentration are available in this setting, but they are rather heterogeneous. In the classical high-temperature regime, one may use Dobrushin’s uniqueness criterion \cite{kulske2003}, and, for finite-range interactions, disagreement-percolation methods provide another route \cite{chazottes/collet/kulske/redig/2007}. Alternatively, one can proceed via logarithmic Sobolev inequalities, which are known under suitable mixing conditions and are generally understood to imply Gaussian concentration through the Herbst argument, although this implication is not always stated explicitly in the lattice setting; recent work of Bauerschmidt and Dagallier \cite{bauerschmidt_dagallier_2024} establishes such inequalities for the Ising model throughout the uniqueness regime. These approaches, however, apply under different assumptions and do not yield a unified picture.

In this context, we recover known results in a unified framework and substantially extend them. Previous approaches based on Dobrushin-type or disagreement-percolation conditions were confined to strict subregimes of uniqueness. By contrast, our results apply throughout the full uniqueness regime, thereby covering models that were inaccessible to earlier techniques, and yield several new consequences.

Our approach builds on recent progress on finitary codings of Gibbs measures. For specific models such as the Ising, Potts, and random-cluster models, results of \cite{spinkaEJP, spinka-harel} provide finitary codings with good tail behavior, which, combined with our abstract results, yield Gaussian concentration together with sharp characterizations in terms of the phase diagram.

In addition, a general route is provided by spatial mixing: by combining our results with the construction of finitary couplings from the past under exponential strong spatial mixing from \cite{spinka-AoP}, we obtain Gaussian concentration for a broad class of models. This includes models where classical techniques based on Dobrushin-type conditions or disagreement percolation do not apply.

We also discuss a non-equilibrium example, namely the parking process, as well as one-dimensional processes, including both Markov chains and chains with unbounded memory, and more generally left-finitary processes, which extend these classes.

Taken together, these examples show that Gaussian concentration is robust under finitary codings with controlled coding volume, yet sensitive enough to detect qualitative changes in the underlying dependence structure.

\subsection{Gibbs measures and Markov random fields on \texorpdfstring{$\Z^d$}{Zd}}

Gaussian concentration was already known under Dobrushin's uniqueness condition \cite{kulske2003}, which in particular covers infinite-range interactions.\footnote{In that paper, $B$ may be a standard Borel space and $\Z^d$ may be any countable set.}
In \cite{chazottes/collet/kulske/redig/2007}, a coupling method was developed, yielding Gaussian concentration for finite-range interactions under van den Berg and Maes's disagreement-percolation criterion. Moreover, Dobrushin uniqueness, disagreement percolation, and H\"aggstr\"om-Steif's high-noise condition each imply exponential strong spatial mixing, not to be confused with ergodic-theoretic mixing.

A major advance in this direction was obtained by Spinka \cite{spinka-AoP}, who showed that finite-valued Markov random fields with exponential strong spatial mixing are finitary factors of i.i.d.\ random fields, with exponential or stretched-exponential tails for the coding radius. Combining this with Theorem~\ref{main-thm-ffiid} gives a unified route to Gaussian concentration: we recover previously known cases and obtain new ones. In particular, for the ferromagnetic Ising and Potts models, this approach yields necessary and sufficient conditions in terms of the inverse temperature. Using Harel and Spinka \cite{spinka-harel}, we also obtain new statements for certain monotone models of infinite range, including the random-cluster model.

We briefly recall the relevant Gibbsian formalism and fix notation; see \cite{GHM2001,georgii2011gibbs,friedli_velenik_2017,ruelle2004thermodynamic} for details. Many of the examples below are Markov random fields generated by nearest-neighbor or, more generally, finite-range interactions, though not all. We also allow hard constraints, so that the configuration space may be a proper subshift of the full shift.

To accommodate hard-core exclusions, we work on a subshift $\mathsf{Y}\subset B^{\Z^d}$, where $B$ is finite. Thus $\mathsf{Y}$ is a closed, shift-invariant subset of the full shift $(B^{\Z^d},(S^j)_{j\in\Z^d})$, interpreted as the set of feasible configurations. In many examples, $\mathsf{Y}$ is a subshift of finite type: the feasible configurations are precisely those in which no pattern from a fixed finite list of forbidden patterns occurs. When $\mathsf{Y}=B^{\Z^d}$ we recover the full shift. Coding maps and coding radii extend verbatim to subshifts.

An interaction is a family $\Phi=\{\Phi_\Lambda\}_{\Lambda\Subset\Z^d}$ of local functions with
\[
\Phi_\Lambda:\mathsf{Y}_\Lambda\to\R,
\qquad
\Phi_{\Lambda+i}=\Phi_\Lambda\circ S^i
\quad\text{for all }i\in\Z^d,
\]
where $\mathsf{Y}_\Lambda$ denotes the restriction of $\mathsf{Y}$ to $\Lambda$. The Hamiltonian in a finite box $\Lambda\Subset\Z^d$ is
\[
H_\Lambda(y)
:=
\sum_{\substack{\Lambda'\Subset\Z^d\\ \Lambda'\cap\Lambda\neq\emptyset}}
\Phi_{\Lambda'}(y_{\Lambda'}),
\qquad y\in\mathsf{Y}.
\]
Write
\[
\mathrm{range}(\Phi)
:=
\inf\bigl\{r>0:\Phi_\Lambda\equiv0\ \text{whenever }\mathrm{diam}(\Lambda)>r\bigr\},
\]
where $\mathrm{diam}$ is computed in the $\ell^1$-metric on $\Z^d$. If $\mathrm{range}(\Phi)<\infty$ we say that $\Phi$ has \emph{finite range}. If $\mathrm{range}(\Phi)=\infty$, we assume absolute summability:
\[
\|\Phi\|
:=
\sum_{\Lambda\Subset\Z^d:\ 0\in\Lambda}
\sup_{y_\Lambda\in\mathsf{Y}_\Lambda}|\Phi_\Lambda(y_\Lambda)|
<\infty.
\]

Given an interaction $\Phi$, a probability measure $\nu$ on $\mathsf{Y}$ is a \emph{Gibbs measure} if for every $\Lambda\Subset\Z^d$,
\[
\nu\big([y_\Lambda]\mid \mathfrak B_{\Lambda^\comp}\big)(y')
=
\frac{\exp\{-H_\Lambda(y_\Lambda y'_{\Lambda^\comp})\}}{Z_\Lambda^{y'}}
\quad\text{for }\nu\text{-a.e.\ }y',
\]
where $\mathfrak B_{\Delta}$ is the product sigma-field on $\mathsf{Y}_\Delta$,
\[
Z_\Lambda^{y'}
:=
\sum_{z_\Lambda\in\mathsf{Y}_\Lambda}
\exp\{-H_\Lambda(z_\Lambda y'_{\Lambda^\comp})\},
\]
and $[y_\Lambda]:=\{x\in B^{\Z^d}:x_\Lambda=y_\Lambda\}$ denotes the corresponding cylinder set. For absolutely summable interactions there exists at least one shift-invariant Gibbs measure. A Gibbs measure is called \emph{extremal} if it cannot be written as a nontrivial convex combination of other Gibbs measures; in the shift-invariant setting, this is equivalent to ergodicity. Typically $\Phi$ depends on parameters such as inverse temperature or fugacity. When $\nu$ is a Gibbs measure, it induces a Gibbs random field $Y=(Y_i)_{i\in\Z^d}$ with law $\nu$.

For $r\in\N$, define the $r$-boundary of a finite set $\Lambda\Subset\Z^d$ by
\[
\partial_r\Lambda
:=
\{i\in\Lambda^\comp:\mathrm{dist}(i,\Lambda)\le r\},
\]
where $\mathrm{dist}$ is computed in the $\ell^1$-metric. We write $\partial\Lambda$ for $\partial_1\Lambda$. A shift-invariant measure $\nu$ on $\mathsf{Y}$ is called an $r$-Markov random field if for every finite $\Lambda\Subset\Z^d$, the conditional law of $Y_\Lambda$ given the outside depends only on $Y_{\partial_r\Lambda}$. When $r=1$, we simply say that $\nu$ is a Markov random field. This corresponds to a nearest-neighbor interaction. For an $r$-Markov random field, $\mathsf{Y}=\operatorname{supp}(\nu)\subset B^{\Z^d}$ is necessarily a subshift of finite type.

We use the notation
\[
E:=\big\{\{i,j\}\subset\Z^d:\|i-j\|_1=1\big\}
\]
for the set of nearest-neighbor edges.

\subsubsection{The ferromagnetic nearest-neighbor Ising model}

Take $B=\{-1,+1\}$. The Hamiltonian for the ferromagnetic Ising model at inverse temperature $\beta>0$, with zero external field, is
\[
H_\Lambda(y_\Lambda y'_{\Lambda^\comp})
=
-\sum_{\substack{\{i,j\}\in E\\ \{i,j\}\subset\Lambda}}\beta\,y_i y_j
-\sum_{\substack{\{i,j\}\in E\\ i\in\Lambda,\ j\in\partial\Lambda}}\beta\,y_i y'_j.
\]

It is well known that there exists $\beta_c(d)\in(0,\infty)$ such that the Gibbs measure is unique for $\beta\le\beta_c(d)$, while for $\beta>\beta_c(d)$ there are multiple ergodic Gibbs measures. In dimension $d=2$, all Gibbs measures are shift-invariant and form a convex combination of two extremal measures, denoted $\nu_\beta^+$ and $\nu_\beta^-$. These are obtained as weak limits, as $\Lambda\uparrow\Z^2$, of finite-volume Gibbs measures with all-$+$ and all-$-$ boundary conditions, respectively. They are the only ergodic Gibbs measures in this setting. When $\nu_\beta^+=\nu_\beta^-$, we write $\nu_\beta$ for the common measure.

\begin{theo}\label{thm:Ising}
For the ferromagnetic nearest-neighbor Ising model in dimension $d\ge2$, Gaussian concentration holds in the uniqueness regime. In the phase coexistence regime $\beta\ge\beta_c(d)$, it fails for every shift-invariant ergodic Gibbs measure. More precisely, for $\beta<\beta_c(d)$, the unique Gibbs measure $\nu_\beta$ satisfies Gaussian concentration, whereas for $\beta\ge\beta_c(d)$ no shift-invariant ergodic Gibbs measure satisfies Gaussian concentration.
\end{theo}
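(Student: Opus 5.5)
The proof splits into the subcritical regime, the supercritical regime and the critical point.

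\emph{Subcritical regime $\beta<\beta_c(d)$.} I would combine an existing finitary coding with Theorem~\ref{main-thm-ffiid}. The plan is to invoke the construction of Harel and Spinka \cite{spinka-harel} (or \cite{spinkaEJP}), which represents the unique Gibbs measure $\nu_\beta$ as a finitary coding of an i.i.d.\ field. This construction goes through monotone coupling from the past, and uses the exponential decay of correlations valid throughout $\beta<\beta_c(d)$ (sharpness of the phase transition, Aizenman--Barsky--Fernández / Duminil-Copin--Tassion). It yields a coding radius with exponential tails: $\Proba(r_\varphi(X)>r)\le C\e^{-cr}$. Hence every moment of $|B_\infty(0,r_\varphi(X))|$ is finite, in particular \eqref{finite-second-moment}. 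Since $B=\{-1,+1\}$ is finite, local functions are continuous and separately bounded. Theorem~\ref{main-thm-ffiid} then gives \eqref{def-GCB} with $C=2^{d+1}\E[(2r_\varphi+1)^{2d}]$. The only nontrivial input is the exponential tail for the coding radius on the whole subcritical interval, not just at high temperature. If only stretched-exponential tails are available in the cited result, that still suffices.

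\emph{Supercritical regime $\beta>\beta_c(d)$.} I would use Theorem~\ref{theo-GCB-PREP}. Suppose an ergodic Gibbs measure $\nu$ satisfied Gaussian concentration. Then $\nu$ would have the positive relative entropy property. But $\nu_\beta^+\ne\nu_\beta^-$ are both shift-invariant Gibbs measures for the same absolutely summable (finite-range) interaction. By the Gibbs variational principle, the specific relative entropy between two shift-invariant Gibbs measures for the same interaction vanishes, i.e.\ $\ent_*(\nu'|\nu)=0$ for every shift-invariant Gibbs $\nu'$. For $d\ge3$ the ergodic Gibbs measures need not be only $\nu^\pm$. For any ergodic shift-invariant Gibbs $\nu$, though, one of $\nu^+,\nu^-$ differs from it and is ergodic, since extremal Gibbs measures are ergodic. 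This contradicts the positive relative entropy property.

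\emph{Critical point $\beta=\beta_c(d)$.} This is the main obstacle: the Gibbs measure is unique (known for all $d\ge2$), so the entropy argument does not apply. I would try a variance argument instead. Take $f_L=\sum_{i\in\Lambda_L}y_i$, so that $\|\delta f_L\|_2^2=4|\Lambda_L|$. Gaussian concentration \eqref{def-GCB}, expanded to second order in $\lambda$, gives $\Var(f_L)\le C'|\Lambda_L|$. This forces the susceptibility $\sum_{i}\Cov(y_0,y_i)$ to be finite, since correlations are nonnegative by Griffiths' inequality and the variance bound lets $L\to\infty$ by monotone summation. But the susceptibility diverges at $\beta_c(d)$, again by \cite{ABF}-type sharpness results; equivalently, nonsummable correlations are implied by $\langle y_0y_i\rangle\ge c\|i\|^{-(d-1)}$ at criticality (Simon's inequality/Lieb). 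This rules out Gaussian concentration at $\beta_c$. It also explains why every finitary coding there has infinite expected coding volume, consistent with Theorem~\ref{theo:cone}.
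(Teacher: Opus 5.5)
Your proposal is correct and follows the paper's proof essentially step for step: for $\beta<\beta_c(d)$, a finitary i.i.d.\ coding with exponential (or stretched-exponential) coding-radius tails from \cite{spinkaEJP} combined with Theorem~\ref{main-thm-ffiid}; for $\beta>\beta_c(d)$, vanishing specific relative entropy between distinct shift-invariant Gibbs states combined with Theorem~\ref{theo-GCB-PREP}; and at $\beta=\beta_c(d)$, the variance bound $\Var(S_n)\le 4C|\Lambda_n|$ for $S_n=\sum_{k\in\Lambda_n}Y_k$, which contradicts nonnegative correlations together with the divergent susceptibility. Your supercritical step is slightly more explicit than the paper's, since for an arbitrary ergodic Gibbs $\nu$ you exhibit an ergodic $\nu'\in\{\nu_\beta^+,\nu_\beta^-\}$ with $\nu'\neq\nu$ and $\ent_*(\nu'\mid\nu)=0$; your closing aside overreaches, however, because Theorem~\ref{theo:cone} rules out finite expected coding volume only for codings with the short-range factorization property, while the claim for all finitary codings is what the paper takes from \cite{steif/vanderberg/1999}.
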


\begin{proof}
If $\beta<\beta_c(d)$, the conclusion follows directly from Theorem~\ref{main-thm-ffiid} together with Theorem~1.1 of \cite{spinkaEJP}, which provides a finitary coding by an i.i.d.\ random field with exponential tails for the coding radius (or by a finite-valued i.i.d.\ field with stretched-exponential tails).

Assume next that $\beta>\beta_c(d)$. Then $\ent_*(\nu_\beta^- \mid \nu_\beta^+)=\ent^*(\nu_\beta^- \mid \nu_\beta^+)=0$; see \cite{georgii2011gibbs}. Hence the positive relative entropy property fails, and therefore Theorem~\ref{theo-GCB-PREP} implies that no shift-invariant ergodic Gibbs measure can satisfy Gaussian concentration.

Finally, consider the critical case $\beta=\beta_c(d)$. By \cite{aizenman2015random}, the ferromagnetic Ising model on $\Z^d$ admits a unique infinite-volume Gibbs measure at criticality; let $Y=(Y_k)_{k\in\Z^d}$ denote the corresponding Gibbs random field.

Suppose, for contradiction, that $Y$ satisfies Gaussian concentration. Then there exists $C<\infty$ such that for every local function $f:\{-1,+1\}^{\Z^d}\to\R$,
\begin{equation}\label{eq:Ising-var-bound}
\Var(f(Y))
\le
C\,\|\delta f\|_2^2.
\end{equation}
Indeed, apply the Gaussian concentration inequality to $\lambda f$, subtract $1$, divide by $\lambda^2$, and let $\lambda\to0$.

Let $\Lambda_n=B_\infty(0,n)$ and define
\[
S_n:=\sum_{k\in\Lambda_n}Y_k.
\]
At criticality, the Ising Gibbs state is centered and ferromagnetic, so that
\[
\E(Y_0)=0,
\qquad
\Cov(Y_0,Y_j)=\E(Y_0Y_j)\ge0
\quad\text{for all }j\in\Z^d.
\]
Moreover, the susceptibility diverges:
\begin{equation}\label{eq:Ising-susceptibility}
\sum_{k\in\Z^d}\E(Y_0Y_k)=+\infty.
\end{equation}
As a consequence,
\begin{equation}\label{eq:Ising-var-diverges}
\frac{1}{|\Lambda_n|}\Var(S_n)\xrightarrow[n\to\infty]{}+\infty.
\end{equation}
Indeed, by shift-invariance,
\begin{align*}
\frac{1}{|\Lambda_n|}\Var(S_n)
&=
\sum_{k\in\Z^d}
\frac{|\Lambda_n\cap(\Lambda_n-k)|}{|\Lambda_n|}
\,\E(Y_0Y_k).
\end{align*}
Hence, for every fixed $R\ge1$,
\[
\liminf_{n\to\infty}\frac{1}{|\Lambda_n|}\Var(S_n)
\ge
\sum_{\|k\|_\infty\le R}\E(Y_0Y_k),
\]
and letting $R\to\infty$ yields \eqref{eq:Ising-var-diverges}.

On the other hand, applying \eqref{eq:Ising-var-bound} to $f(Y)=S_n$ gives
\[
\Var(S_n)\le C\,\|\delta f\|_2^2\le 4C\,|\Lambda_n|,
\]
so that
\[
\limsup_{n\to\infty}\frac{1}{|\Lambda_n|}\Var(S_n)\le 4C,
\]
contradicting \eqref{eq:Ising-var-diverges}. This completes the proof.
\end{proof}

\begin{remark}
Recall that any shift-invariant measure satisfying Gaussian concentration is necessarily ergodic. 
When $d=2$, the only ergodic Gibbs measures of the ferromagnetic Ising model are $\nu_\beta^+$ and $\nu_\beta^-$. It follows that Gaussian concentration holds for $\beta<\beta_c$, while for $\beta>\beta_c$ it fails for both $\nu_\beta^+$ and $\nu_\beta^-$.

In contrast with the two-dimensional case, where all Gibbs measures are shift-invariant, this is no longer true in dimension $d=3$. At sufficiently low temperature, one encounters the so-called Dobrushin states, which are extremal but not shift-invariant, and therefore do not correspond to equilibrium states. We refer to \cite{georgii2011gibbs} for these results.

Although Gaussian concentration itself does not require shift invariance a priori, the theorem above is restricted to shift-invariant Gibbs measures, since our argument in the coexistence regime relies crucially on shift invariance. It therefore remains open whether certain non-shift-invariant Gibbs measures may satisfy Gaussian concentration. We do not expect this to hold for Dobrushin states, in view of the presence of macroscopic interface fluctuations.
\end{remark}

The next proposition shows that the critical Ising model realizes the optimal obstruction behind Theorem~\ref{theo:cone}. Although finitary codings from an i.i.d.\ random field do exist at criticality, every such coding must have infinite expected coding volume. Thus the failure of Gaussian concentration and the impossibility of finite expected coding volume have a common origin, namely the divergence of the susceptibility.

\begin{proposition}[Ising model at criticality]\label{prop:Ising-at-betac}
Let $d\ge2$. For the ferromagnetic Ising model at $\beta=\beta_c(d)$, the unique Gibbs measure does not satisfy Gaussian concentration. Nevertheless, it satisfies the blowing-up property, since it admits a finitary coding from an i.i.d.\ random field. Moreover, any finitary coding of this random field by an i.i.d.\ random field necessarily has infinite expected coding volume.
\end{proposition}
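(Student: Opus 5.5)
The statement has three parts, and I would prove them in order.

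\emph{Failure of Gaussian concentration.} This is already contained in the critical case of the proof of Theorem~\ref{thm:Ising}. Gaussian concentration implies the variance bound \eqref{eq:Ising-var-bound}. Applied to $S_n$, it gives $\Var(S_n)\le 4C|\Lambda_n|$. On the other hand, the divergence of the susceptibility \eqref{eq:Ising-susceptibility}, together with nonnegativity of correlations, forces $\Var(S_n)/|\Lambda_n|\to\infty$. These two facts are incompatible.

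\emph{Blowing-up property.} Here I would invoke an existing finitary coding at criticality. Uniqueness at $\beta_c$ holds by \cite{aizenman2015random}. Combined with the criterion of van den Berg and Steif \cite{steif/vanderberg/1999} (finitary coding of the plus state if and only if the Gibbs measure is unique), the critical Ising field is a finitary coding of an i.i.d.\ field. The stability of the blowing-up property under finitary codings, recalled in Subsection~\ref{subsec:fvrf}, then gives the blowing-up property, since i.i.d.\ fields have it.

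\emph{Infinite expected coding volume.} I would argue by contradiction and aim to show directly that finite expected coding volume forces summable covariances. This contradicts \eqref{eq:Ising-susceptibility}. Suppose $Y=\varphi(X)$ with $\E|B_\infty(0,r_\varphi(X))|<\infty$. For $k\in\Z^d$, let $G_k$ be the event that $r_0<\|k\|_\infty/2$ and $r_k<\|k\|_\infty/2$. On $G_k$, $Y_0$ and $Y_k$ are functions of $X$ on disjoint boxes. Replace $Y_0$ and $Y_k$ by truncated versions $\tilde Y_0=\varphi^{(m)}(X)_0$ and $\tilde Y_k=\varphi^{(m)}(X)_k$ with $m=\lfloor(\|k\|_\infty-1)/2\rfloor$, using Lemma~\ref{lem:trunc-block}. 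These are independent, being block codes on disjoint boxes, so $\Cov(\tilde Y_0,\tilde Y_k)=0$. Since spins are bounded by $1$,
\[
|\Cov(Y_0,Y_k)|\le |\Cov(Y_0,Y_k)-\Cov(\tilde Y_0,\tilde Y_k)|\le c\,\Proba(G_k^\comp)\le 2c\,\Proba\bigl(r_\varphi(X)\ge \|k\|_\infty/2\bigr),
\]
for some absolute constant $c$. The middle inequality holds because $Y_0Y_k=\tilde Y_0\tilde Y_k$ on $G_k$ and the means differ by at most $2\Proba(r\ge\|k\|_\infty/2)$.

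Summing over $k$, I get
\[
\sum_k|\Cov(Y_0,Y_k)|\le 2c\,\E|B_\infty(0,2r_\varphi(X))|\le 2^{d+1}c\,\E|B_\infty(0,r_\varphi(X))|<\infty.
\]
This contradicts \eqref{eq:Ising-susceptibility}, since $\E Y_0=0$ gives $\Cov(Y_0,Y_k)=\E(Y_0Y_k)$.

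The step I expect to need the most care is the truncation and independence argument. One has to check that $\Cov(Y_0,Y_k)-\Cov(\tilde Y_0,\tilde Y_k)$ is controlled by $\Proba(r_0>m)+\Proba(r_k>m)$ with bounded constants; this works since all variables are bounded by $1$. One also has to check that the disjointness of $B_\infty(0,m)$ and $B_\infty(k,m)$ holds for the chosen $m$. The other ingredients (the susceptibility divergence and the finitary coding at $\beta_c$) are imported from the literature cited above.
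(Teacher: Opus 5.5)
Your first part is the paper's own argument (the critical case of Theorem~\ref{thm:Ising}). Your third part takes a different route. The paper simply invokes Theorem~4.3 of \cite{steif/vanderberg/1999}, whereas you prove the statement directly. Truncating at $m=\lfloor(\|k\|_\infty-1)/2\rfloor$ via Lemma~\ref{lem:trunc-block} gives block codes on the disjoint boxes $B_\infty(0,m)$ and $B_\infty(k,m)$, so the truncated spins are independent. Boundedness then gives $|\Cov(Y_0,Y_k)|\le 2c\,\Proba(r_\varphi(X)\ge\|k\|_\infty/2)$ (one can take $c=6$). Summing, $\sum_k|\Cov(Y_0,Y_k)|\le 2^{d+1}c\,\E|B_\infty(0,r_\varphi(X))|$, which contradicts \eqref{eq:Ising-susceptibility}. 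This is correct. It costs a few lines more than a citation, but it buys several things. It is self-contained and uses only ingredients already in the paper. It applies verbatim to i.i.d.\ inputs on an arbitrary standard Borel space, which is the generality of the statement, without checking the hypotheses of the cited theorem. It also turns into a proof the remark preceding the proposition: finite expected coding volume forces summable covariances, i.e.\ $\Var(S_n)=O(|\Lambda_n|)$, which is exactly what Gaussian concentration would force too.

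The step whose justification must change is the existence of a finitary coding at $\beta_c$. The existence theorem of \cite{steif/vanderberg/1999}, for codings from finite-valued i.i.d.\ sources, is derived from exponential ergodicity of a monotone probabilistic cellular automaton and covers the plus state for $\beta<\beta_c$. It cannot reach $\beta_c$. An exponentially ergodic monotone dynamics with the critical measure as stationary law would, by coupling from the past, give a coding with exponentially decaying radius, which your own third step excludes. At criticality the paper uses only the negative Theorem~4.3 from \cite{steif/vanderberg/1999}. The ``if and only if'' in the introduction is a summary that also relies on \cite{aizenman2015random} and on the recent construction of \cite{timar2025factor}, and the latter is what the proof invokes. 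It provides exactly what your blowing-up step needs. The blowing-up property and its stability under finitary codings are set up in Subsection~\ref{subsec:fvrf} for finite alphabets, so ``i.i.d.\ fields have it'' requires a finite-valued source. Uniqueness at $\beta_c$ alone does give a finitary coding from a non-finite i.i.d.\ source by monotone coupling from the past, but then you would have to extend the blowing-up machinery beyond finite source alphabets. With \cite{timar2025factor} in place of the criterion, your proof is complete.
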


\begin{proof}
The failure of Gaussian concentration at criticality is established in Theorem~\ref{thm:Ising}. On the other hand, the Ising model at $\beta=\beta_c(d)$ admits a finitary coding from an i.i.d.\ random field; in particular, \cite{timar2025factor} constructs such a coding from a finite-valued i.i.d.\ source. Consequently, the corresponding Gibbs measure satisfies the blowing-up property; see Subsection~\ref{subsec:fvrf}. Finally, Theorem~4.3 of \cite{steif/vanderberg/1999} shows that at $\beta=\beta_c(d)$, the existence of a finitary coding already forces the expected coding volume to be infinite:
\[
\E\big[\,|B_\infty(0,r_\varphi(Y))|\,\big]=\infty.
\]
\end{proof}

\subsubsection{The random-cluster model}

In contrast with classical nearest-neighbor models such as the Ising model or the Potts model, the random cluster model is inherently non-local:
the conditional distribution of a single edge depends on the entire configuration through global connectivity properties.
In particular, it cannot be described by a finite-range
interaction.

Let $E$ again denote the set of nearest-neighbor edges of $\Z^d$. A configuration is an element $y\in\{0,1\}^{E}$, where $y(e)=1$ means that $e$ is open.

For parameters $p\in[0,1]$ and $q\ge 1$, the random-cluster model admits two standard infinite-volume Gibbs measures, the free and wired measures, denoted by $\phi^{\mathrm{free}}_{p,q}$ and $\phi^{\mathrm{wired}}_{p,q}$. They are obtained as weak limits of the corresponding finite-volume measures with free and wired boundary conditions. Both are shift-invariant and ergodic. When they coincide, we write $\phi_{p,q}$ for the common measure.

It is known that there exists a critical threshold $p_c(q)\in[0,1]$ such that for each of the boundary conditions $i\in\{\mathrm{free},\mathrm{wired}\}$,
\[
\phi^i_{p,q}\{\exists\ \text{an infinite cluster}\}
=
\begin{cases}
0, & p<p_c(q),\\[4pt]
1, & p>p_c(q).
\end{cases}
\]
When $q=1$, the model reduces to Bernoulli bond percolation, in which case the infinite-volume measure is unique for every $p$.

\begin{theo}\label{thm:RC}
Let $d\ge2$ and $q>1$.
If $p<p_c(q)$, then $\phi_{p,q}$ satisfies Gaussian concentration.
If $p>p_c(q)$, then neither $\phi^{\mathrm{free}}_{p,q}$ nor $\phi^{\mathrm{wired}}_{p,q}$ satisfies Gaussian concentration.
\end{theo}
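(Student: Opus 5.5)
The proof follows the same two-part pattern as Theorem~\ref{thm:Ising}. A finitary coding with good tails gives the subcritical case, and the failure of the positive relative entropy property gives the supercritical case.

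\emph{Subcritical regime $p<p_c(q)$.} Here uniqueness holds, so $\phi^{\mathrm{free}}_{p,q}=\phi^{\mathrm{wired}}_{p,q}=\phi_{p,q}$. I would invoke the result of Harel and Spinka \cite{spinka-harel}, which treats monotone models of infinite range including the random-cluster model. It yields a finitary coding of $\phi_{p,q}$ by an i.i.d.\ random field, with a coding radius having exponential (or at least stretched-exponential) tails whenever $p<p_c(q)$. The configuration lives on the edge set $E$ rather than on $\Z^d$. I would identify $E$ with $\Z^d\times\{1,\dots,d\}$ and view $y$ as a $\{0,1\}^d$-valued field on $\Z^d$, so that $B=\{0,1\}^d$ is finite and local functions are automatically continuous. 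An edge observable with oscillations $\delta_e f$ then has site oscillations at most $\sum_{e\ni\text{site}}\delta_e f$, so $\|\delta f\|_2^2$ changes by at most a factor $d$. Stretched-exponential tails imply $\E[(2r_\varphi(X)+1)^{2d}]<\infty$, and Theorem~\ref{main-thm-ffiid} then gives Gaussian concentration. The main thing to check is that the tail statement in \cite{spinka-harel} really covers the whole range $p<p_c(q)$, and not only a subrange defined by exponential decay of connectivities. For $q\ge1$ on $\Z^d$, sharpness of the phase transition (Duminil-Copin--Raoufi--Tassion) supplies exponential decay throughout $p<p_c(q)$, which is the input their construction needs. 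I expect this verification to be the main obstacle.

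\emph{Supercritical regime $p>p_c(q)$.} Both $\phi^{\mathrm{free}}_{p,q}$ and $\phi^{\mathrm{wired}}_{p,q}$ are shift-invariant and ergodic, so Theorem~\ref{theo-GCB-PREP} applies with $B=\{0,1\}^d$. Suppose first that $\phi^{\mathrm{free}}_{p,q}\ne\phi^{\mathrm{wired}}_{p,q}$. Both are random-cluster (DLR-type) measures for the same specification, and the specification is quasilocal at these two measures. The standard variational-principle argument then gives zero relative entropy density between them: the finite-volume Radon--Nikodym derivatives are controlled up to boundary terms of order $|\partial\Lambda_n|=o(|\Lambda_n|)$. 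Hence $\ent_*(\phi^{\mathrm{free}}_{p,q}\mid\phi^{\mathrm{wired}}_{p,q})=0$, and symmetrically. This violates the positive relative entropy property, so neither measure satisfies Gaussian concentration. The boundary control is delicate because connectivity is nonlocal, but the wired/free finite-volume measures are mutually comparable up to a factor $q^{O(|\partial\Lambda|)}$, since the number of clusters changes by at most $O(|\partial\Lambda|)$ when the boundary condition changes. This bound should suffice.

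Suppose instead that $\phi^{\mathrm{free}}_{p,q}=\phi^{\mathrm{wired}}_{p,q}$ for some $p>p_c(q)$. This is the generic case, as non-uniqueness happens for at most countably many $p$, so this case must also be handled. I would then argue with a second measure instead: the measure $\phi_{p',q}$ for $p'$ slightly different is not helpful. Using the Edwards--Sokal coupling with the Potts model when $q\ge2$ is an integer, the $q$ coexisting Potts phases have zero mutual relative entropy, and non-uniqueness of the Potts model for $p>p_c(q)$ transfers a failure of the positive relative entropy property. For general real $q>1$, I would instead aim directly for a variance lower bound as in the critical Ising argument. Take $f=$ the number of edges of $\Lambda_n$ connected to $\partial\Lambda_n$, or a similar boundary-sensitive observable. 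The randomness of whether the origin's cluster is infinite under finite-volume conditioning should give $\Var(f)\gg|\Lambda_n|$, which contradicts the variance bound \eqref{eq:Ising-var-bound} implied by Gaussian concentration. I regard this last case as the least secure part of the plan, and I would first check whether the statement is intended only where free and wired measures differ.
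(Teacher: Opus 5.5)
Your subcritical argument and your treatment of the supercritical case with $\phi^{\mathrm{free}}_{p,q}\neq\phi^{\mathrm{wired}}_{p,q}$ follow the paper's route: Harel--Spinka \cite{spinka-harel} plus Theorem~\ref{main-thm-ffiid} below $p_c(q)$, and vanishing relative entropy plus Theorem~\ref{theo-GCB-PREP} above it. Your $q^{\pm O(|\partial\Lambda_n|)}$ comparison of the finite-volume measures correctly justifies the zero relative entropy density, which the paper only asserts. The edge-to-site regrouping with $B=\{0,1\}^d$ is a point the paper leaves implicit.

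The gap is the case $\phi^{\mathrm{free}}_{p,q}=\phi^{\mathrm{wired}}_{p,q}$ with $p>p_c(q)$, which you rightly call generic. Neither of your fixes can work, because the conclusion is false there.

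\emph{The Potts transfer runs in the wrong direction.} Under Edwards--Sokal, all $q$ ordered Potts phases have the same random-cluster marginal $\phi^{\mathrm{wired}}_{p,q}$. Their vanishing mutual relative entropy therefore produces no pair of distinct random-cluster measures. Moreover, Gaussian concentration passes from the pair (Potts field, independent $\mathrm{Bernoulli}(p)$ variables $U_e$) to the random-cluster field through the block map $\omega(e)=\1\{\sigma_x=\sigma_y\}U_e$, and not conversely. So failure for the Potts phases says nothing about $\phi_{p,q}$.

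\emph{No variance lower bound can hold either.} For $d=2$, planar duality $\omega^*(e^*)=1-\omega(e)$ maps $\phi_{p,q}$ to $\phi_{p^*,q}$ on the dual lattice. Here $p^*<p_c(q)=\sqrt q/(1+\sqrt q)$ whenever $p>p_c(q)$. The duality map is a bijective, shift-equivariant block code of radius one, and it changes $\|\delta f\|_2^2$ by at most a factor $4$. Hence your own subcritical argument gives Gaussian concentration for $\phi_{p,q}$ at every $p>p_c(q)$. Moreover, $\phi^{\mathrm{free}}_{p,q}=\phi^{\mathrm{wired}}_{p,q}$ for all $p\neq p_c(q)$ in $d=2$. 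So the second assertion of the theorem is false for every $p>p_c(q)$ when $d=2$.

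The paper's proof has the same defect: it simply asserts that the free and wired measures are distinct for all $p>p_c(q)$. What can actually be proved is your first supercritical case. That is, the conclusion holds at those $p>p_c(q)$ where $\phi^{\mathrm{free}}_{p,q}\neq\phi^{\mathrm{wired}}_{p,q}$. There are at most countably many such $p$, none in $d=2$, and conjecturally none in any dimension. Your closing suspicion that the statement is meant only where the two measures differ is exactly the right correction.
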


\begin{proof}
If $p<p_c(q)$, Theorem~1.3 of \cite{spinka-harel} shows that the model is a finitary coding of a finite-valued i.i.d.\ random field with stretched-exponential tails for the coding radius. The conclusion therefore follows from Theorem~\ref{main-thm-ffiid}.

If $p>p_c(q)$, there is phase coexistence: the two distinct infinite-volume Gibbs measures $\phi^{\mathrm{free}}_{p,q}$ and $\phi^{\mathrm{wired}}_{p,q}$ have zero relative entropy with respect to one another. Theorem~\ref{theo-GCB-PREP} therefore implies that Gaussian concentration cannot hold for either of them.
\end{proof}

In dimension $d=2$, every Gibbs measure is a convex combination of the free and wired measures. In particular, the only shift-invariant ergodic Gibbs measures are $\phi^{\mathrm{free}}_{p,q}$ and $\phi^{\mathrm{wired}}_{p,q}$ when they are distinct. Thus, in dimension $d=2$, Theorem~\ref{thm:RC} gives a complete picture away from criticality.

\subsubsection{The ferromagnetic nearest-neighbor Potts model}

Fix an integer $q\ge2$ and let $B=\{1,\dots,q\}$. For a finite box $\Lambda\Subset\Z^d$, inverse temperature $\beta>0$, and $i\in\{0,1,\dots,q\}$, define the finite-volume Hamiltonian with all-$i$ boundary condition by
\[
H_\Lambda(y_\Lambda\, i_{\Lambda^\comp})
=
-\sum_{\substack{\{u,v\}\in E\\ \{u,v\}\subset\Lambda}}
\beta\,\1\{y_u=y_v\}
-\sum_{\substack{\{u,v\}\in E\\ u\in\Lambda,\ v\in\partial\Lambda}}
\beta\,\1\{y_u=i\},
\]
where the second sum is interpreted as $0$ when $i=0$. The corresponding infinite-volume Gibbs measures are denoted by $\nu_{\beta,q}^0,\nu_{\beta,q}^1,\dots,\nu_{\beta,q}^q$; they are obtained as weak limits and are shift-invariant and ergodic. The case $q=2$ reduces, up to the usual relabeling of spins, to the Ising model.

Set
\[
\beta_c(q):=-\log\bigl(1-p_c(q)\bigr),
\]
where $p_c(q)$ is the random-cluster critical parameter. If $\beta<\beta_c(q)$, then it is well known that the measures $\nu_{\beta,q}^0,\nu_{\beta,q}^1,\dots,\nu_{\beta,q}^q$ all coincide; we denote the common measure by $\nu_{\beta,q}$.

\begin{theo}\label{thm:Potts}
Let $d\ge2$ and $q\ge2$.

If $\beta<\beta_c(q)$, then the (unique) Gibbs measure
$\nu_{\beta,q}$ satisfies Gaussian concentration.

If $\beta>\beta_c(q)$, then none of the extremal
shift-invariant Gibbs measures
$\nu_{\beta,q}^1,\dots,\nu_{\beta,q}^q$
satisfies Gaussian concentration.
\end{theo}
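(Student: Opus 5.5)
The proof follows the same two-regime scheme as Theorems~\ref{thm:Ising} and~\ref{thm:RC}: in the subcritical regime we produce a finitary i.i.d.\ coding with good tails and invoke Theorem~\ref{main-thm-ffiid}; in the supercritical regime we rule out Gaussian concentration through the positive relative entropy property (Theorem~\ref{theo-GCB-PREP}).

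\emph{Case $\beta<\beta_c(q)$.} Here $p=1-\e^{-\beta}<p_c(q)$, so by Theorem~\ref{thm:RC} and the result of \cite{spinka-harel} used there, the random-cluster measure $\phi_{p,q}$ is a finitary coding of a finite-valued i.i.d.\ field with stretched-exponential tails for the coding radius. We then use the Edwards--Sokal coupling to obtain $\nu_{\beta,q}$. Enlarge the i.i.d.\ input with independent uniform labels $U_i\in\{1,\dots,q\}$ attached to each site. Given the edge configuration, the spin at $j$ is the label $U_{m(j)}$, where $m(j)$ is a deterministic, shift-equivariant choice of representative of the (a.s.\ finite) open cluster $\mathcal C(j)$, for instance its lexicographically minimal site. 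This composite map is a finitary coding of an i.i.d.\ field. Its radius at $0$ is bounded by $\mathrm{diam}(\mathcal C(0))$ plus the maximum of the random-cluster coding radii over the edges of $\mathcal C(0)$ and its boundary. To determine the cluster one must decide all of its edges and confirm that its outer boundary edges are closed.

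\emph{Main obstacle: tails of this composite radius.} We need $\E[(2r+1)^{2d}]<\infty$. For $p<p_c(q)$, exponential decay of connectivities (sharpness of the phase transition for $q\ge1$, Duminil-Copin--Raoufi--Tassion) gives exponential tails for $\mathrm{diam}\,\mathcal C(0)$. A union bound over the at most polynomially many edges in $B_\infty(0,R)$ then shows that the probability that some edge within distance $R$ has coding radius exceeding $R$ decays stretched-exponentially. Combining the two estimates gives stretched-exponential tails for $r$, so all moments are finite and Theorem~\ref{main-thm-ffiid} applies. If \cite{spinka-harel} already states the Potts coding directly, we would simply cite it instead.

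\emph{Case $\beta>\beta_c(q)$.} Then $p>p_c(q)$, and the measures $\nu^1_{\beta,q},\dots,\nu^q_{\beta,q}$ are distinct, because the wired random-cluster measure percolates and spin $i$ has density strictly larger than $1/q$ under $\nu^i$. These measures are ergodic and shift-invariant Gibbs measures for the same finite-range interaction. By the variational principle for Gibbs measures \cite{georgii2011gibbs}, the relative entropy density between two shift-invariant Gibbs measures for the same absolutely summable interaction vanishes, so $\ent_*(\nu^i_{\beta,q}\mid\nu^j_{\beta,q})=0$ for $i\neq j$. Each $\nu^j$ therefore fails the positive relative entropy property, and Theorem~\ref{theo-GCB-PREP} shows that none of them satisfies Gaussian concentration.
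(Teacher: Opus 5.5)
Your proposal is correct and follows the paper's route. Below $\beta_c(q)$ you transfer the Harel--Spinka subcritical random-cluster coding to Potts via Edwards--Sokal and apply Theorem~\ref{main-thm-ffiid}; above $\beta_c(q)$ you use the vanishing relative entropy density between distinct ordered phases together with Theorem~\ref{theo-GCB-PREP}. Your lexicographic-minimal cluster representative, the composite-radius tail estimate (exponential decay of connectivities plus a union bound), and the density argument for distinctness of the phases fill in steps that the paper asserts in one line.
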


\begin{proof}
If $\beta<\beta_c(q)$, then the Gibbs measure is unique.
By Theorem~1.3 of \cite{spinka-harel}, the subcritical random-cluster
model admits a finitary coding from a finite-valued i.i.d.\ process
with stretched-exponential coding-radius tails.
Via the Edwards–Sokal coupling, the same holds for the Potts model.
The claim then follows from Theorem~\ref{main-thm-ffiid}.

Assume $\beta>\beta_c(q)$.
It is well known that in this regime there exist at least
$q$ distinct shift-invariant extremal Gibbs measures,
namely the monochromatic ordered phases
$\nu_{\beta,q}^1,\dots,\nu_{\beta,q}^q$,
and that they are distinct.
Fix $i\neq j$.
Since $\nu_{\beta,q}^i$ and $\nu_{\beta,q}^j$
are Gibbs measures for the same shift-invariant
finite-range potential, the relative entropy density
between them vanishes:
\[
\ent_*(\nu_{\beta,q}^i \mid \nu_{\beta,q}^j)=0.
\]

By Theorem~\ref{theo-GCB-PREP},
a shift-invariant measure satisfying Gaussian concentration
cannot admit another distinct shift-invariant measure
with zero relative entropy density.
Since $\nu_{\beta,q}^i \neq \nu_{\beta,q}^j$,
it follows that none of the measures
$\nu_{\beta,q}^1,\dots,\nu_{\beta,q}^q$
can satisfy Gaussian concentration.
\end{proof}

\begin{remark}
For $\beta>\beta_c(q)$ the only extremal shift-invariant
Gibbs measures are the $q$ ordered phases
$\nu_{\beta,q}^1,\dots,\nu_{\beta,q}^q$.
The free boundary condition measure $\nu_{\beta,q}^0$
is a convex combination of these phases and hence not extremal.
At criticality, the structure depends on the order of the phase
transition; we do not address that case here.
\end{remark}

\begin{remark}
For the Potts model, the results of \cite{spinka-harel} substantially strengthen those of \cite{spinka-AoP}. The methods are quite different: in particular, \cite{spinka-harel} does not proceed through spatial mixing, which is the mechanism used in the next subsection. 

Nevertheless, the spatial mixing approach has the advantage of being more flexible and applies to a broader class of models, including systems for which no direct finitary coding construction is currently available.
\end{remark}

\subsubsection{Weak and strong spatial mixing}

Let $Y=(Y_i)_{i\in\Z^d}$ be a finite-valued Markov random field with law $\nu$, supported on the feasible set $\mathsf{Y}\subset B^{\Z^d}$. Recall that for a finite set $\Lambda\Subset\Z^d$, the external nearest-neighbor boundary is
\[
\partial\Lambda:=\{i\in\Lambda^\comp:\mathrm{dist}(i,\Lambda)=1\}.
\]

Write $\mathsf{Y}_{\partial\Lambda}$ for the feasible boundary configurations on $\partial\Lambda$. For finite $\Lambda'\subset\Lambda$ and $z\in\mathsf{Y}_{\partial\Lambda}$, let
\[
\nu_{\Lambda,\Lambda'}^{\,z}(\cdot)
:=
\Law_\nu\bigl(Y_{\Lambda'}\in\cdot\mid Y_{\partial\Lambda}=z\bigr)
\]
for $\nu$-a.e.\ feasible $z$.

We recall two classical notions of spatial mixing.

We say that $\nu$ satisfies \emph{weak spatial mixing with rate $\varrho:\N\to[0,\infty)$} if $\varrho$ is nonincreasing, $\varrho(n)\to0$, and for every finite $\Lambda\Subset\Z^d$, every $\Lambda'\subset\Lambda$, and all feasible $z,z'\in\mathsf{Y}_{\partial\Lambda}$,
\[
\bigl\|\nu_{\Lambda,\Lambda'}^{\,z}-\nu_{\Lambda,\Lambda'}^{\,z'}\bigr\|_{\scriptscriptstyle{\mathrm{TV}}}
\le
|\Lambda'|\,\varrho\bigl(\mathrm{dist}(\Lambda',\partial\Lambda)\bigr).
\]
If in addition $\varrho(n)\le C\e^{-cn}$ for some $c,C>0$ and all $n\ge1$, we say that $\nu$ satisfies \emph{exponential weak spatial mixing}.

We say that $\nu$ satisfies \emph{strong spatial mixing with rate $\varrho:\N\to[0,\infty)$} if $\varrho$ is nonincreasing, $\varrho(n)\to0$, and for every finite $\Lambda\Subset\Z^d$, every $\Lambda'\subset\Lambda$, and all feasible $z,z'\in\mathsf{Y}_{\partial\Lambda}$,
\[
\bigl\|\nu_{\Lambda,\Lambda'}^{\,z}-\nu_{\Lambda,\Lambda'}^{\,z'}\bigr\|_{\scriptscriptstyle{\mathrm{TV}}}
\le
|\Lambda'|\,\varrho\Bigl(\mathrm{dist}\bigl(\Lambda',\{i\in\partial\Lambda:z_i\neq z'_i\}\bigr)\Bigr).
\]
If $\varrho(n)\le C\e^{-cn}$ for some $c,C>0$ and all $n\ge1$, we say that $\nu$ satisfies \emph{exponential strong spatial mixing}.

As a direct consequence of Theorem~1.1 in \cite{spinka-AoP} and Theorem~\ref{main-thm-ffiid}, we obtain the following.

\begin{theo}\label{thm:SSM-GCB}
Let $d\ge1$ and let $Y=(Y_i)_{i\in\Zd}$ be a random field taking values in a finite set $B$.
If $Y$ satisfies exponential strong spatial mixing, then $Y$ satisfies Gaussian concentration.
If $d=2$ and $Y$ satisfies exponential weak spatial mixing for squares and has no hard constraints, that is, if the topological support of its law is $B^{\Zd}$, then $Y$ also satisfies Gaussian concentration.
\end{theo}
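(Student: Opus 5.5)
The plan is to combine Spinka's finitary-coding theorem with Theorem~\ref{main-thm-ffiid}, as the sentence preceding the statement indicates. For a finite-valued Markov random field with exponential strong spatial mixing, Theorem~1.1 of \cite{spinka-AoP} produces an i.i.d.\ field $X$ and a finitary coding $\varphi$ with $Y\eqlaw\varphi(X)$. The coding radius has exponential tails, $\Proba(r_\varphi(X)>n)\le C\e^{-cn}$, when the i.i.d.\ input takes values in a standard Borel space (for instance uniform $[0,1]$ variables). With a finite-valued input the tails are stretched-exponential, $\Proba(r_\varphi(X)>n)\le C\e^{-cn^{\gamma}}$ for some $\gamma>0$. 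The same theorem covers the second statement: for $d=2$ and no hard constraints, exponential weak spatial mixing on squares suffices to obtain such a coding. The bookkeeping here is to check that the hypotheses of \cite{spinka-AoP} match the ones stated (feasible set, boundary conditions, Markov property, range $r$ if necessary). If Spinka's result is formulated for Markov random fields, I would note that the statement implicitly assumes $Y$ is a Markov random field, as in the surrounding subsection.

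Next I verify \eqref{finite-second-moment}. Since
\[
\E\big[(2r_\varphi(X)+1)^{2d}\big]=\sum_{n\ge0}\big((2n+1)^{2d}-(2n-1)_+^{2d}\big)\Proba(r_\varphi(X)\ge n),
\]
and the increments grow only polynomially in $n$, both exponential and stretched-exponential tails make the series converge. Hence the second moment of the coding volume is finite.

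Theorem~\ref{main-thm-ffiid} then gives, for every local $f$ (local functions are continuous and have the bounded-difference property because $B$ is finite),
\[
\log\E\big[\e^{\lambda(f(Y)-\E f(Y))}\big]\le 2^d\lambda^2\,\E\big[(2r_\varphi(X)+1)^{2d}\big]\,\|\delta f\|_2^2 .
\]
This is \eqref{def-GCB} with $C=2^{d+1}\E[(2r_\varphi(X)+1)^{2d}]$, a constant independent of $f$. The case $\lambda<0$ follows by applying the bound to $-f$. Since the inequality depends only on the law of $Y$, working with $\varphi(X)$ instead of $Y$ itself is harmless.

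The main obstacle is not analytic: it is ensuring that the quoted coding result delivers tail bounds in exactly the generality claimed. This covers the hard-constraint case (subshifts of finite type) under strong spatial mixing, and the two-dimensional weak-mixing case, where Spinka's theorem requires full support. If only a weaker tail bound were available in some case, I would fall back on Theorem~\ref{theo:cone}. Coupling-from-the-past constructions such as Spinka's should satisfy the short-range factorization property, so the first moment would suffice there. Verifying that property would then become the hard step.
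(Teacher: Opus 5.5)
Your proposal is correct and follows the paper's route: the paper also obtains the statement directly by combining Theorem~1.1 of \cite{spinka-AoP} (a finitary i.i.d.\ coding with exponential or stretched-exponential coding-radius tails, in both the strong-mixing case and the two-dimensional weak-mixing-for-squares case without hard constraints) with Theorem~\ref{main-thm-ffiid}. Your explicit check that these tails give a finite second moment of the coding volume, and your identification of the constant $C=2^{d+1}\E[(2r_\varphi(X)+1)^{2d}]$, fill in details the paper leaves implicit.
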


We illustrate this result with one example borrowed from \cite{spinka-AoP}. Further examples, including the hard-core, Widom-Rowlinson, and beach models, are discussed there. In regimes where the relevant spatial mixing property is known, our theorem yields Gaussian concentration. For instance, for the beach model, neither Dobrushin's uniqueness condition nor disagreement percolation applies directly, but \cite{spinka-AoP} establishes sufficient spatial mixing in certain parameter ranges, from which Gaussian concentration follows.

\paragraph{Proper colorings.}
Let $q\ge3$ be an integer. A proper $q$-coloring is a configuration $x\in\{1,\dots,q\}^{\Z^d}$ satisfying $x_i\neq x_j$ whenever $i$ and $j$ are adjacent. The set of proper $q$-colorings defines a subshift of finite type in $\{1,\dots,q\}^{\Z^d}$, and proper colorings arise as ground states of the nearest-neighbor antiferromagnetic Potts model. For $\Lambda\Subset\Z^d$ and a boundary condition $z$ on $\Lambda^\comp$, the finite-volume Gibbs measure is the uniform law on proper $q$-colorings of $\Lambda$ matching $z$ on $\partial\Lambda$.

It is classical, for instance by Dobrushin's uniqueness condition, that the model admits a unique Gibbs measure when $q>4d$, and that this measure satisfies exponential strong spatial mixing. This threshold can be improved to
\[
q>2\alpha d-\gamma,
\]
where
\begin{equation}\label{eq:alpha-gamma-apps}
\alpha^\alpha=\e
\qquad\text{and}\qquad
\gamma:=\frac{4\alpha^3-6\alpha^2-3\alpha+4}{2(\alpha^2-1)}.
\end{equation}
Numerically, $\alpha\approx1.763$ and $\gamma\approx0.47$.

\begin{theo}
For $d\ge2$ and $q>2\alpha d-\gamma$, with $\alpha$ and $\gamma$ as in \eqref{eq:alpha-gamma-apps}, the unique Gibbs measure for proper $q$-colorings of $\Z^d$ satisfies Gaussian concentration.
\end{theo}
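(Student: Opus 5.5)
The plan is to reduce the statement to Theorem~\ref{thm:SSM-GCB}, that is, to Theorem~1.1 of \cite{spinka-AoP} combined with Theorem~\ref{main-thm-ffiid}. What has to be checked is that, for $q>2\alpha d-\gamma$, the uniform proper-coloring Markov random field has a unique Gibbs measure and that this measure satisfies exponential strong spatial mixing in the sense defined above. This regime is exactly the one identified in the literature on strong spatial mixing for colorings. For $d\ge 3$ one uses the results of Goldberg, Martin and Paterson, together with the refinement yielding the constant $\gamma$. For $d=2$ the known thresholds (e.g.\ $q\ge 6$, via computer-assisted arguments) are even better than $2\alpha d-\gamma\approx 6.58$, so they cover the stated range. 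In the setting of \cite{spinka-AoP}, which treats proper colorings as one of its examples, these results are already quoted as giving exponential strong spatial mixing, with respect to the boundary $\partial\Lambda$, for all finite $\Lambda$.

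The first step is to fix the framework. The feasible set $\mathsf{Y}$ of proper $q$-colorings is a nearest-neighbour subshift of finite type. The finite-volume specification is uniform on the colorings of $\Lambda$ compatible with $z$ on $\partial\Lambda$. Since $q\ge 2d+1$ in this range, every feasible boundary condition extends to $\Lambda$, so the conditional laws $\nu^{z}_{\Lambda,\Lambda'}$ are well defined for every feasible $z$. This is important because our definition of strong spatial mixing quantifies over all feasible $z,z'$.

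The second step is to invoke exponential strong spatial mixing from the literature. The cited results are usually stated as a decay of the single-site influence, in the form
\[
\bigl|\nu^{z}_{\Lambda,\{v\}}(\cdot)-\nu^{z'}_{\Lambda,\{v\}}(\cdot)\bigr|\le C\e^{-c\,\mathrm{dist}(v,D)},
\]
where $D$ is the set of disagreements. A union bound over $v\in\Lambda'$, after coupling the sites one by one, converts this into the $|\Lambda'|\varrho(\cdot)$ form. Uniqueness of the Gibbs measure follows from the same estimate.

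The third step is to apply Theorem~\ref{thm:SSM-GCB}, which gives Gaussian concentration. Unpacking it, Theorem~1.1 of \cite{spinka-AoP} yields a finitary coding from an i.i.d.\ field whose coding radius has exponential (or stretched-exponential) tails. All moments of the coding volume are then finite, in particular \eqref{finite-second-moment}, and Theorem~\ref{main-thm-ffiid} applies. Locality suffices here because $B$ is finite, so local functions are automatically continuous and have the bounded-difference property.

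The main obstacle is the second step: making sure that the imported spatial-mixing statement holds precisely in the form required, uniformly over all finite $\Lambda$ and not only over boxes, and over all feasible boundary conditions. If the literature only provides strong spatial mixing on a restricted class of shapes, I would fall back on the version of Spinka's theorem used in \cite{spinka-AoP} for colorings, which is designed for exactly this model and parameter range, and quote it directly.
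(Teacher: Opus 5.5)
Your proposal is correct and follows the paper's route. Exponential strong spatial mixing for proper $q$-colorings when $q>2\alpha d-\gamma$ (Goldberg--Martin--Paterson, as quoted in \cite{spinka-AoP}) is fed into Theorem~\ref{thm:SSM-GCB}, that is, Spinka's finitary coding with (stretched-)exponential radius tails combined with Theorem~\ref{main-thm-ffiid}. One small correction: $\gamma$ is part of the Goldberg--Martin--Paterson bound itself ($q>\alpha\Delta-\gamma$ for triangle-free graphs of maximum degree $\Delta=2d\ge 3$), so that bound covers $d=2$ directly and your case split and the appeal to a separate refinement are unnecessary.
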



\subsection{The thermodynamic jamming limit of the parking process}

We next consider a non-equilibrium example. The simple parking process is a particular instance of the broader class of random sequential adsorption models; see \cite{MR1887532,coletti2024fluctuations}. These models are defined by an irreversible deposition mechanism and therefore fall outside the class of equilibrium models such as Gibbs distributions \cite{RevModPhys.65.1281,MR1824203}.

Let $\Lambda_n=[-n,\dots,n]^d\cap\Z^d$, viewed as an initially empty box. Cars are parked sequentially according to the following rule. At each step, a site $i\in\Lambda_n$ is sampled uniformly among those not previously selected. If all $2d$ nearest neighbors of $i$ are empty, then $i$ becomes occupied; otherwise it remains vacant. Once all sites have been examined, the procedure stops, and the resulting configuration in $\{0,1\}^{\Lambda_n}$ is called the jamming limit of $\Lambda_n$.

Penrose \cite{MR1887532} proved a weak law of large numbers and a central limit theorem for the proportion of occupied sites as $n\to\infty$. Subsequently, Ritchie \cite{MR2205908} introduced the thermodynamic jamming limit, that is, an infinite-volume random field $Y=(Y_i)_{i\in\Z^d}$, and showed that it can be constructed as a finitary coding of i.i.d.\ random variables $X_i\sim\mathrm{Unif}[0,1]$, with exponentially decaying coding radius.

As a consequence, Gaussian concentration for the random field $Y$ follows from Theorem~\ref{main-thm-ffiid}. This is substantially stronger than Proposition~2.4 in \cite{coletti2024fluctuations}, which applies only to the proportion of occupied sites.

\subsection{Random fields arising as limiting distributions of probabilistic cellular automata}

We now return to the mechanism underlying many of the preceding examples, namely finitary codings produced by coupling-from-the-past constructions for probabilistic cellular automata (PCA). Throughout this subsection we use the notation of \cite{spinkaEJP}.

Let $B$ be a non-empty finite set and let $A$ be a finite set. A PCA is specified by finite sets $F,F'\Subset\Z^d$, a family of i.i.d.\ random variables $(W_{v,t})_{v\in\Z^d,\ t\in\Z}$ taking values in $A$, and a local update function
\[
f:B^F\times A^{F'}\to B.
\]
Given an initial configuration $\xi\in B^{\Z^d}$ and an initial time $t_0\in\Z$, the time evolution $(\omega^{\xi,t_0}_{v,t})_{v\in\Z^d,\ t\ge t_0}$ is defined recursively by
\begin{align}
\omega^{\xi,t_0}_{v,t_0} &:= \xi_v, \qquad v\in\Z^d,\label{eq:defPCA-app1}\\
\omega^{\xi,t_0}_{v,t+1}
&:=
f\Bigl((\omega^{\xi,t_0}_{v+u,t})_{u\in F},\ (W_{v+u,t})_{u\in F'}\Bigr),
\qquad v\in\Z^d,\ t\ge t_0.\label{eq:defPCA-app2}
\end{align}

The PCA is called uniformly ergodic if, for every $v\in\Z^d$, the coalescence time
\begin{equation}\label{eq:coalescencePCA-app}
\tau_v
:=
\min\bigl\{t\ge0:\omega^{\xi,-t}_{v,0}\ \text{does not depend on }\xi\bigr\}
\end{equation}
is almost surely finite. In this case the stationary field $\omega^*=(\omega_v^*)_{v\in\Z^d}$ is defined by
\begin{equation}\label{eq:FCPCA-app}
\omega_v^*:=\omega^{\xi,-\tau_v}_{v,0},
\qquad v\in\Z^d,
\end{equation}
which is almost surely well defined and independent of $\xi$. Its law is the limiting distribution of the PCA; see Proposition~2.3 in \cite{spinkaEJP}.

The construction \eqref{eq:defPCA-app1}-\eqref{eq:FCPCA-app} is a finitary coding from the i.i.d.\ field $((W_{v,t})_{t<0})_{v\in\Z^d}$ to the stationary law of the PCA. Moreover, the cone structure of the dependence implies the short-range factorization property needed in Theorem~\ref{theo:cone}.

\begin{figure}
\begin{tikzpicture}[scale=1.2]

\draw[->] (0,-5) -- (0,2) node[right] {$t$};
\draw[->] (-6,0) -- (6,0) node[below] {$\mathbb{Z}^d$};

\node[fill=black,circle,inner sep=2pt,label=above:$i$] (i) at (-4,0) {};
\node[fill=black,circle,inner sep=2pt,label=above:$k$] (k) at (-1,0) {};
\node[fill=black,circle,inner sep=2pt,label=above:$\ell$] (ell) at (2,0) {};

\draw[thick,blue] (-1,0) -- (-4,-3);
\draw[thick,blue] (-1,0) -- (2,-3);

\draw[thick,red] (2,0) -- (0.5,-1.5);
\draw[thick,red] (2,0) -- (3.5,-1.5);

\begin{scope}[opacity=0.12]
  \fill[blue]  (-1,0) -- (-4,-3) -- (2,-3) -- cycle;
  \fill[red]   (2,0)  -- (0.5,-1.5) -- (3.5,-1.5) -- cycle;
\end{scope}

\end{tikzpicture}
\caption{Cone of influence for points $k$ and $\ell$ in $\mathbb{Z}^d\times\mathbb{Z}$ (here $d=1$), in the case where $\max\{\|\ell-i\|,\|k-i\|,\|\ell-k\|\}=\|\ell-i\|$.}
\label{fig:cone}
\end{figure}

\begin{theo}\label{thm:PCA-cone}
Let $\mu$ be the limiting distribution of a uniformly ergodic PCA. Then $\mu$ is a finitary coding of an i.i.d.\ random field satisfying the short-range factorization property with $\alpha=1/2$.
\end{theo}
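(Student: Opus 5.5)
The plan is to make the coupling-from-the-past coding explicit and then read the short-range factorization property off the cone geometry of the dependence (Figure~\ref{fig:cone}). Set $R:=\max\{\|u\|_\infty: u\in F\cup F'\}$. The input field is indexed by space only: put $X_v:=(W_{v,t})_{t<0}$, which takes values in the standard Borel space $A^{\N}$. These $X_v$ are i.i.d., and $\omega^*$ is a shift-equivariant function of $X$. By induction on \eqref{eq:defPCA-app2}, the value $\omega^{\xi,-t}_{v,0}$ depends only on $\xi$ restricted to $B_\infty(v,Rt)$ and on $W_{u,s}$ with $-t\le s<0$ and $\|u-v\|_\infty\le R(-s)$. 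This set is a backward light cone of slope $R$.

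Step one is the finiteness of the coding. The event $\{\tau_v\le t\}$ is measurable with respect to $\sigma(W_{u,s}: -t\le s<0,\ \|u-v\|_\infty\le Rt)$. Hence $r_\varphi(T^vX)\le R\tau_v<\infty$ almost surely, so the coding is finitary. Rather than the exact coding radius, I will work with the dominating radius $\rho_v:=R\tau_v$. The proof of Theorem~\ref{theo:cone} only uses that each $\rho_v$ bounds the locality radius and is a shift-invariant family, so a dominating radius suffices. Truncation as in Definition~\ref{def:truncA-d} is compatible with this choice.

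Step two is the factorization. Take $k,\ell,i$ with $\|\ell-i\|$ maximal and put $D:=\|\ell-k\|_\infty$. Write
\[
\1\{\|\ell-k\|_\infty\le\rho_\ell\}=\1\{\tau_\ell\ge D/R\}.
\]
The key observation is that $\{\tau_\ell\ge t\}=\{\tau_\ell\le t-1\}^c$ depends only on inputs in the cone of $\ell$ truncated at depth $t$. To decorrelate it from $\{\|k-i\|\le\rho_k\}=\{\tau_k\ge\|k-i\|/R\}$, I would not use this latter event itself. Instead I would use the larger event "the cone of $\ell$ restricted to depth $D/(2R)$ has not coalesced", which is $\{\tau_\ell\ge D/(2R)\}$. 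This event depends only on $W_{u,s}$ with $|s|\le D/(2R)$ and $\|u-\ell\|\le D/2$. The event for $k$ must then be replaced by something measurable outside that region. By the triangle inequality, points $u$ within distance $D/2$ of $\ell$ are at distance at least $D/2$ from $k$.

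The main obstacle is that $\{\tau_k\ge m\}$, with $m=\|k-i\|/R$, still depends on the upper layers $|s|\le D/(2R)$ near $k$, and these layers overlap the cone of $\ell$ horizontally when $m$ is large. I would handle this with the ordering assumption, which gives $\|k-i\|\le\|\ell-i\|$. If $\|k-i\|\le D/2$, the cone of $k$ down to depth $m$ lies within distance $D/2$ of $k$. The two truncated cones are then disjoint, independence gives the product bound, and the inequality holds with $\alpha=1/2$ after monotonicity in the threshold.

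If $\|k-i\|>D/2$, I would first try a different route. The cone of $k$ to depth $m$ is determined by a slab whose coalescence for $k$ requires coalescence of its own subcone. The idea is to bound $\{\tau_k\ge m\}\subset\{\tau_k\ge D/(2R)\}$ only when necessary, and otherwise split the inputs by time layers: deep layers $s<-D/(2R)$ for $k$ and shallow layers for $\ell$. Making this rigorous may require the monotone Markov property of the PCA, namely that non-coalescence by time $m$ implies non-coalescence of the restricted shallow dynamics. I expect this to be where the $\alpha=1/2$ constant is genuinely used and the delicate point of the proof. Once the inequality holds for $\rho$, Theorem~\ref{theo:cone} applies directly.
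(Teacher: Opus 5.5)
Your setup matches the paper's: columns $X_v=(W_{v,t})_{t<0}$, backward cones of slope $R$, and independence from disjoint input sets. Working with $\rho_v=R\tau_v$ is consistent with the paper's cone picture (Figure~\ref{fig:cone}), which also reads the radius off the depth of the space-time cone. The argument is not finished, though. The case $\|k-i\|_\infty>D/2$ is left open, and the obstacle you describe there is not real.

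It comes from Step one, where you record that $\{\tau_v\le t\}$ is measurable with respect to the \emph{box} $\{-t\le s<0,\ \|u-v\|_\infty\le Rt\}$. With boxes, the top layers of $k$'s region do widen as $m$ grows. Your own induction gives something sharper: $\{\tau_k\le m-1\}$, and hence $\{\tau_k\ge m\}$, is measurable with respect to $W_{u,s}$ in the \emph{cone} $\{(u,s):s<0,\ \|u-k\|_\infty\le R|s|\}$. The width of this cone at layer $s$ is $R|s|$, whatever $m$ is. Because of this, no monotone Markov property or layer-splitting is needed. A general PCA is not monotone, so that route would not be available anyway.

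Here is the fix. Set $T:=\lceil D/(2R)\rceil$, so that $\{\rho_\ell\ge D/2\}=\{\tau_\ell\ge T\}$. This event is measurable with respect to the region $\{(u,s):-(T-1)\le s<0,\ \|u-\ell\|_\infty\le R|s|\}$. A point common to this region and the cone of $k$ would satisfy $D\le\|u-k\|_\infty+\|u-\ell\|_\infty\le 2R(T-1)<D$, which is impossible. So the whole cone of $k$, at any depth, is disjoint from the cone of $\ell$ cut just above depth $D/(2R)$. Monotonicity in the threshold and independence then give
\[
\Proba\bigl(\rho_k\ge\|k-i\|_\infty,\ \rho_\ell\ge D\bigr)\le\Proba\bigl(\rho_k\ge\|k-i\|_\infty,\ \tau_\ell\ge T\bigr)=\Proba\bigl(\rho_k\ge\|k-i\|_\infty\bigr)\,\Proba\bigl(\rho_\ell\ge D/2\bigr).
\]
This holds for every triple $k,\ell,i$, so you need neither the ordering hypothesis nor a case split. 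The choice of depth $T-1$ also removes the boundary overlap between your two radius-$D/2$ boxes in the first case. The disjointness of a full cone from a half-depth cone is exactly the content of the paper's proof and of Figure~\ref{fig:cone}; the constant $\alpha=1/2$ comes from this geometry alone.
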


\begin{proof}
Let $s:=F\cup F'$. Define $S_0=s$ and recursively
\[
S_n:=\bigcup_{i\in S_{n-1}}(s+i),
\qquad n\ge1.
\]
For $v\in\Z^d$, the cone of influence of $\omega_v^*=\omega^{\xi,-\tau_v}_{v,0}$ is the random set
\[
C_v:=\bigcup_{t=0}^{\tau_v}\ \bigcup_{i\in S_t}(i+v,t).
\]
By construction, $\omega_v^*$, and hence its coding radius, is measurable with respect to the variables $W_{i,t}$ with $(i,t)\in C_v$. Writing
\[
\overline W_i:=(W_{i,t})_{t<0},
\qquad i\in\Z^d,
\]
we see that $\mu$ is a finitary factor of the i.i.d.\ field $\overline W=(\overline W_i)_{i\in\Z^d}$.

Now let $k,\ell,i\in\Z^d$ satisfy
\[
\max\{\|\ell-i\|,\|k-i\|,\|\ell-k\|\}=\|\ell-i\|.
\]
Then the indicators
\[
\1\{\|k-i\|_\infty\le r_k(\overline W)\}
\quad\text{and}\quad
\1\bigl\{\tfrac12\|k-\ell\|_\infty\le r_\ell(\overline W)\bigr\}
\]
depend on disjoint sets of input variables, as illustrated in Figure~\ref{fig:cone}, and are therefore independent. This is precisely the short-range factorization property with $\alpha=1/2$.
\end{proof}

As a consequence, any uniformly ergodic PCA whose coding volume has finite first moment satisfies Gaussian concentration by Theorem~\ref{theo:cone}.

\subsection{Left finitary processes}\label{sec:LFP}

We now turn to one-dimensional processes. Throughout, stochastic processes are viewed as probability measures on $B^{\Z}$, where $\Z$ plays the role of the time axis. We introduce a general class of processes, which we call \emph{left finitary processes}. Closely related notions appear in the literature under the name of unilateral codings \cite{del1990bernoulli,ornstein1975unilateral}.

Let $A$ and $B$ be standard Borel spaces. Suppose that the $B$-valued process $Y=(Y_i)_{i\in\Z}$ is obtained as a stationary coding of an $A$-valued process $X=(X_i)_{i\in\Z}$, and let $\varphi$ denote the coding map. For $x\in A^{\Z}$, define the \emph{left coding radius at the origin} by
\[
r_\varphi^-(x)
:=
\inf\Bigl\{r\in\N_0:\ \forall y\in A^{\Z},\
y_{-r}^0=x_{-r}^0\Rightarrow \varphi(y)_0=\varphi(x)_0\Bigr\}
\in\N_0\cup\{\infty\}.
\]
We say that $Y$ is a \emph{left finitary coding} of $X$ if $r_\varphi^-$ is almost surely finite.

The next statement is an immediate consequence of Theorem~\ref{theo:cone}.

\begin{theo}\label{theo:LFC}
If $Y$ is a left finitary coding of an i.i.d.\ process $X$, then for every local function $f:B^{\Z}\to\R$ satisfying the bounded-difference property,
\begin{equation}\label{eq:LFC-app}
\log \E\big[\exp\{\lambda(f(Y)-\E f(Y))\}\big]
\le
3\lambda^2
\bigl(\E[2r_\varphi^-(X)+1]\bigr)^2
\|\delta f\|_2^2,
\qquad \forall\,\lambda>0.
\end{equation}
\end{theo}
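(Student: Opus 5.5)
The plan is to run the argument of Theorem~\ref{theo:cone} again, with the two-sided coding window replaced by the one-sided window $[j-r^-_\varphi(T^jX),\,j]$, where $r^-_\varphi$ denotes the left coding radius. One-sidedness should give the short-range factorization in exact form, with $\alpha=1$ and with genuine independence.

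First I truncate as in Definition~\ref{def:truncA-d}, but using $r^-$: set $r^{-,(n)}_j:=\min\{r^-_\varphi(T^jx),n\}$, and put $\varphi^{(n)}(x)_j=\varphi(x)_j$ if $r^-_\varphi(T^jx)\le n$ and $b_0$ otherwise. As in Lemma~\ref{lem:trunc-block}, $\varphi^{(n)}(x)_j$ then depends only on $x_{j-n}^{j}$.

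Next I redo the telescoping of Lemma~\ref{lem:multisite-d} with one-sided windows. This gives the generalized bounded-difference bound \eqref{generalized-bounded-difference-property} with
\[
c_i^{(n)}(x)=\sum_{j\in\dep(f)}\delta_jf\,\1\{0\le j-i\le r^{-,(n)}_j(x)\}.
\]
Expanding the square exactly as in Proposition~\ref{prop:upper} and applying Young's inequality, the problem reduces to bounding, uniformly in $k$ (by shift invariance it suffices to take $k=0$),
\[
\sum_{\ell,i\in\Z}\E\Big[\1\{0\le k-i\le r^{-,(n)}_k\}\,\1\{0\le \ell-i\le r^{-,(n)}_\ell\}\Big].
\]
Marton's inequality (Theorem~\ref{thm:Marton}) then gives the factor $\lambda^2/2$. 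Finally, monotone and dominated convergence as $n\to\infty$ finish the argument, exactly as in the proof of Theorem~\ref{main-thm-ffiid}.

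The key structural fact I need is that the event $\{r^-_\varphi(T^jX)\ge m\}$ is measurable with respect to $X_{j-m+1},\dots,X_j$. Indeed, it says that $\varphi(\cdot)_j$ is not determined by the coordinates $x_{j-m+1}^{j}$. For the truncated radius the same holds for $m\le n$, and the event is empty for $m>n$. I expect this measurability claim to be the main obstacle. The infimum over all $y$ must be handled carefully, and measurability of the "not determined" event is not automatic in a standard Borel space. I would first try to check it directly from the definition of $r^-_\varphi$. Failing that, I would only claim equality of events up to null sets, which suffices for the independence argument below.

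With that fact in hand, fix $i\le\min(k,\ell)$; otherwise the summand vanishes. Say $i\le k\le\ell$. Then the first indicator is measurable with respect to $X_{i+1},\dots,X_k$. The second indicator is bounded by $\1\{\ell-k\le r^-_\ell\}$, which is measurable with respect to $X_{k+1},\dots,X_\ell$. These blocks are disjoint, so the expectation factors. Summing over $i$ and $\ell$ then gives at most $(\E[r^-+1])^2$. The case $i\le\ell\le k$ is symmetric, and the three-case bookkeeping of Theorem~\ref{theo:cone} (keeping its constant $3$ for safety) yields $3(\E[2r^-_\varphi(X)+1])^2$, using $r+1\le 2r+1$. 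Multiplying by $\lambda^2/2$ gives a bound no larger than \eqref{eq:LFC-app}.
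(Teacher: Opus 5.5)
Your proof is correct. Its core idea is the paper's: one-sidedness makes the two coding events depend on disjoint blocks of the i.i.d.\ input. The execution differs in a way that matters.

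The paper simply asserts that a left finitary coding has the short-range factorization property with $\alpha=1$ and invokes Theorem~\ref{theo:cone} as a black box. You instead rerun truncation, telescoping, Young and Marton with the one-sided windows $[j-r^{-,(n)}_j,j]$. This is more than cosmetic. Theorem~\ref{theo:cone} works with two-sided balls, so summands with $i>\min(k,\ell)$ survive, and for those the disjoint-block claim fails.

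A counterexample: let $Y_j$ be the time since the last $1$ in an i.i.d.\ Bernoulli$(1/2)$ sequence. Writing $r_j:=r_\varphi(T^jX)$, one has $r_j=r^-_\varphi(T^jX)=Y_j$. Take $\ell=0$, $k=1$, $i=5$, so that $|\ell-i|$ is maximal. Then $\Proba(r_1\ge 4,\ r_0\ge 1)=1/16>1/32=\Proba(r_1\ge 4)\,\Proba(r_0\ge 1)$. This violates Definition~\ref{def-short-range-factorization-property} for every $\alpha\in(0,1]$.

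Your one-sided telescoping makes those summands vanish. In the two remaining configurations the relevant blocks really are disjoint:
\begin{itemize}
\item for $i\le k\le\ell$, they are $X_{i+1}^{k}$ and $X_{k+1}^{\ell}$;
\item for $i\le\ell\le k$, they are $X_{\ell+1}^{k}$ and $X_{i+1}^{\ell}$.
\end{itemize}
Your route also gives the bound directly in terms of $r^-_\varphi$, which the black-box application does not, and with a better constant. The double sum is at most $2(\E[r^-_\varphi(X)]+1)^2$, so you actually get $\lambda^2(\E[r^-_\varphi(X)]+1)^2\|\delta f\|_2^2$, and the three-case bookkeeping is unnecessary.

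Two minor points.
\begin{itemize}
\item The limit $n\to\infty$ needs no continuity of $f$. Indeed $f$ is bounded (its oscillation is at most $\|\delta f\|_1$), and $f(\varphi^{(n)}(X))=f(Y)$ as soon as $n\ge\max_{j\in\dep(f)}r^-_\varphi(T^jX)$.
\item Your measurability concern is settled as you suggest, and the paper does not address it. One has $\{r^-_\varphi\ge m\}=\{X_{-m+1}^{0}\in E_m\}$ with $E_m$ the projection of a Borel set. Hence $E_m$ is analytic and universally measurable, so the event agrees up to a null set with an event of $\sigma(X_{-m+1},\dots,X_0)$.
\end{itemize}
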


\begin{proof}
A left finitary coding satisfies the short-range factorization property with $\alpha=1$; indeed, because the coding is one-sided, the relevant dependence events are functions of disjoint blocks of the input process. The result therefore follows from Theorem~\ref{theo:cone}.
\end{proof}

Equivalently, if $Y=\varphi(X)$ is left finitary, then there exist a measurable map $\psi$ and a stopping time $\tau$ such that
\[
Y_0=\psi(X_0,X_{-1},\dots,X_{-\tau}).
\]
Thus left finitary processes can be viewed as random generalizations of moving averages of finite order.

\paragraph{Coupling from the past.}
A natural source of left finitary codings is provided by coupling-from-the-past (CFTP) constructions. Consider a stochastic recursive sequence
\[
Y_i=f_i(Y_{-\infty}^{i-1};X_i),
\qquad i\in\Z,
\]
driven by an i.i.d.\ process $X=(X_i)_{i\in\Z}$ on a standard Borel space $A$, with values in a standard Borel space $B$, and assume the family $(f_i)$ is stationary up to translation. Similarly as we did for PCA, let us define, for any $y\in B^{\Z}$ and $t_0\in\Z$, the process $(Y^{[y],t_0}_t)_{t\in\Z}$ as
\begin{align*}
Y^{[y],t_0}_t&=y_t\,\,,\,\,\,\,\,t\le t_0\\
Y^{[y],t_0}_t&=f_t(y_{-\infty}^{t_0}Y^{[y],t_0}_{t_0+1}\ldots Y^{[y],t_0}_{t_0-1},X_t)\,\,,\,\,\,\,\,t\ge t_0+1.
\end{align*}
Define the regeneration time
\[
\theta
:=
\inf\bigl\{k\ge0:Y_0^{[y],-k}\ \text{does not depend on }y\bigr\}.
\]
If $\theta<\infty$ almost surely, then the process is a left finitary coding of the i.i.d.\ input. Hence Theorem~\ref{theo:LFC} yields the following.

\begin{cor}\label{cor:CFTP}
If $Y$ is obtained by a CFTP algorithm and $\E\theta<\infty$, then $Y$ satisfies the Gaussian concentration bound \eqref{eq:LFC-app}, with $\theta$ in place of $r_\varphi^-$.
\end{cor}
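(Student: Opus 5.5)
The plan is to reduce Corollary~\ref{cor:CFTP} to Theorem~\ref{theo:LFC}. The regeneration time $\theta$ will serve as a (random) upper bound for the left coding radius of a concrete left finitary coding map $\varphi$ built from the CFTP construction.

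\textbf{Step 1: the coding map.} I would define $\varphi(x)_0 := Y_0^{[y],-\theta(x)}$ for an arbitrary $y\in B^{\Z}$. This value is independent of $y$ by definition of $\theta$. For $j\in\Z$, set $\varphi(x)_j:=\varphi(T^j x)_0$, which makes $\varphi$ shift-equivariant by construction.

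\textbf{Step 2: $\varphi$ is measurable and has the right law.} Measurability follows by writing $\{\theta\le k\}$ and $\varphi$ on that event through countably many measurable operations. Since $B$ is standard Borel, one may restrict $y$ to a fixed reference configuration. To see that $\varphi(X)$ has the law of $Y$, I would use the usual CFTP argument. For $k\ge\theta$, the value $Y_0^{[y],-k}$ does not depend on $y$, so it coincides with $\varphi(X)_0$; in particular it agrees with the value obtained when $y$ is the past of the stationary solution. The same reasoning applies jointly at finitely many sites, by taking $k$ larger than the regeneration times at those sites.

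\textbf{Step 3: $r_\varphi^-\le\theta$ pointwise.} This is the key observation. The recursion started at time $-k$ uses only the inputs $X_{-k+1},\dots,X_0$, together with the boundary $y$. Hence, if $x'$ agrees with $x$ on $\{-\theta(x),\dots,0\}$, then $Y_0^{[y],-\theta(x)}$ computed from $x'$ equals the one computed from $x$, for every $y$. It follows that $\theta(x')\le\theta(x)$ and that $\varphi(x')_0=\varphi(x)_0$. This gives $r_\varphi^-(x)\le\theta(x)$, so the coding is left finitary when $\theta<\infty$ almost surely.

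\textbf{Step 4: conclude.} Apply Theorem~\ref{theo:LFC} to $\varphi$. By monotonicity, $\E[2r_\varphi^-(X)+1]\le\E[2\theta+1]<\infty$, which yields \eqref{eq:LFC-app} with $\theta$ in place of $r_\varphi^-$.

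The main obstacle I expect is the index bookkeeping in Step 3, specifically whether the inputs used when starting at $-k$ are $X_{-k+1},\dots,X_0$ or $X_{-k},\dots,X_0$. An off-by-one there only shifts $\theta$ by $1$, and I would absorb it into the definition of $\theta$, or accept the constant $\E[2\theta+3]$. The other delicate point is the well-definedness and measurability of $\varphi$, given that the boundary $y$ ranges over an uncountable set. I would handle this by noting that $\theta$ is a stopping time for the reversed filtration generated by $(X_0,X_{-1},\dots)$, as the paper already implicitly asserts.
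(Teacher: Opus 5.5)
Your proposal is correct and follows the paper's route: the CFTP output is a left finitary coding with $r_\varphi^-\le\theta$ pointwise, and Theorem~\ref{theo:LFC} then yields the bound with $\theta$ in place of $r_\varphi^-$, since its right-hand side is monotone in the radius. You also supply details the paper leaves implicit, namely that coalescence persists for $k\ge\theta$ and the pointwise bound $r_\varphi^-\le\theta$; your off-by-one worry is moot, because agreement on $\{-\theta,\dots,0\}$ covers the inputs used under either indexing convention, and with the paper's convention one even gets $r_\varphi^-\le\theta-1$.
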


We now illustrate this general principle in several classical one-dimensional settings.

\subsection{Markov chains}\label{sec:Markov}

We now specialize to Markov chains. While left finitary codings and coupling-from-the-past constructions provide a natural source of examples, the Markovian setting admits a more precise and essentially complete characterization, obtained by combining our abstract results with several known equivalences.

Gaussian concentration for Markov chains has been studied in several works \cite{marton1996measure,samson2000,redig2009concentration,paulin2015concentration,chazottes2023gaussian}. More recently, \cite{dedecker/gouezel/2015} proved that a stationary Markov chain satisfies Gaussian concentration if and only if it is geometrically ergodic, that is, there exists $\rho\in(0,1)$ such that for every state $b$ there is a constant $C_b$ satisfying
\[
\|P^n(b,\cdot)-\pi\|_{\scriptscriptstyle{\mathrm{TV}}}
\le
C_b\,\rho^n.
\]
This is strictly weaker than uniform ergodicity; see, for instance, the Toboggan chain discussed below. Subsequently, \cite{havet2020quantitative} obtained Gaussian concentration under geometric ergodicity, with an explicit but typically hard-to-compute concentration constant.

Our contribution is to place these results within a broader structural framework and to relate them to finitary codings and return-time properties, leading to a collection of equivalent characterizations.

\subsubsection{Geometrically ergodic Markov chains}

We say that a chain has \emph{exponential return times} if for every $b\in B$ there exist $c,C>0$ such that
\[
\mathbb P(\tau_b>k\mid Y_0=b)\le C\e^{-ck},
\qquad \forall k\in\N,
\]
where $\tau_b:=\inf\{k\ge1:Y_k=b\}$.

\begin{theo}\label{prop:equiv_Markov}
Let $Y=(Y_n)_{n\in\mathbb Z}$ be a stationary, irreducible, and aperiodic Markov chain with countable state space $B$, transition matrix $P$, and unique stationary distribution $\pi$. Then the following are equivalent:
\begin{enumerate}[label=\textup{(\arabic*)}]
\item $Y$ is geometrically ergodic;
\item $Y$ satisfies Gaussian concentration;
\item $Y$ has exponential return times;
\item $Y$ is a finitary coding of an i.i.d.\ process with exponentially decaying coding radius;
\item $Y$ is a coding of an i.i.d.\ process;
\item $Y$ is finitarily isomorphic to an i.i.d.\ process.
\end{enumerate}
\end{theo}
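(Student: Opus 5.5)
The plan is to prove the equivalences as a cycle of implications, using the abstract results of Section~\ref{sec:main} for the link with concentration and citing the classical Markov-chain literature for the rest. I would organize the argument as
\[
(1)\Leftrightarrow(2),\quad (1)\Leftrightarrow(3),\quad (3)\Rightarrow(4)\Rightarrow(2),\quad (3)\Rightarrow(6)\Rightarrow(5),\quad (5)\Rightarrow(1).
\]

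First, $(1)\Leftrightarrow(2)$ is exactly the theorem of \cite{dedecker/gouezel/2015} recalled above, so I would simply cite it. For $(1)\Leftrightarrow(3)$ I would use the classical theory of countable-state chains (Kendall's theorem and its converse, as in Nummelin--Tuominen). For an irreducible aperiodic chain, geometric ergodicity is equivalent to $\E_b[\kappa^{\tau_b}]<\infty$ for some $\kappa>1$ and one, hence every, state $b$. By Markov's inequality this is the same as an exponential tail for $\tau_b$ under $\Proba(\cdot\mid Y_0=b)$.

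For $(3)\Rightarrow(4)$ I would build a coupling-from-the-past (regeneration) coding. Fix a state $b$ and drive the chain by an i.i.d.\ sequence $X_i$, for instance via a Nummelin splitting at $b$. The value $Y_0$ is then determined by the $X_i$'s back to the last regeneration time before $0$. The regeneration time is dominated by a return time to $b$, so it inherits exponential tails from (3). This gives a left finitary coding with exponentially decaying radius, and in particular (4).

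Then $(4)\Rightarrow(2)$ follows from Theorem~\ref{main-thm-ffiid}, since exponential tails give $\E[(2r_\varphi+1)^2]<\infty$; alternatively one can use Theorem~\ref{theo:LFC}, since the coding is one-sided. The implication $(4)\Rightarrow(5)$ is trivial. For $(3)\Rightarrow(6)$ I would invoke Rudolph's finitary isomorphism theorem for countable-state Markov chains with exponential return times. The entropy is finite because exponential return times give a finite-entropy return-time partition, so the chain is finitarily isomorphic to the i.i.d.\ process of the same entropy. Finally, $(6)\Rightarrow(5)$ is immediate.

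The main obstacle is closing the cycle with $(5)\Rightarrow(1)$, that is, extracting a \emph{quantitative} mixing rate from the purely qualitative statement that $Y$ is a factor of an i.i.d.\ process. Here is what I would try first. Use the coding to realize $Y$ and an independent copy of its past jointly, resampling the i.i.d.\ input on $(-\infty,-n]$, and bound the total variation distance between the law of $Y_0$ given $Y_{-n}=b$ and $\pi$ by the probability that $Y_0$ changes. I would then try to upgrade the resulting mixing rate to a geometric one using the Markov property. The idea is that a decay rate for $\|P^n(b,\cdot)-\pi\|_{\scriptscriptstyle{\mathrm{TV}}}$ which is uniform along a set of states of positive $\pi$-mass, combined with submultiplicativity of coupling times for Markov chains, forces exponential decay. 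This is the step where I expect the argument to be delicate and possibly to require additional input: I would have to check carefully whether the factor map gives enough control on the conditional law to run a submultiplicativity argument.
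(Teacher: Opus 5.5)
Your $(1)\Leftrightarrow(2)$, your $(1)\Leftrightarrow(3)$ via Kendall (the paper instead derives (3) from the Gaussian tail bound) and your $(4)\Rightarrow(2)$ are fine. The cycle breaks at $(5)\Rightarrow(1)$, which is false as you set it up. If (5) only says that $Y$ is a coding of an i.i.d.\ process, it carries no quantitative information. Every chain satisfying the hypotheses is absolutely regular, since $\sum_x\pi(x)\|P^n(x,\cdot)-\pi\|_{\mathrm{TV}}\to0$ by dominated convergence. It is therefore weak Bernoulli, hence Bernoulli (Friedman--Ornstein on finite lumpings of $B$, plus Ornstein's theorem on increasing unions of Bernoulli factors), hence a coding of an i.i.d.\ process. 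The Toboggan chain with $p_i\asymp i^{-3}$ is such a chain, yet $\E_0[\kappa^{\tau_0}]=\infty$ for every $\kappa>1$, so it is not geometrically ergodic. In particular, no submultiplicativity upgrade can work: submultiplicativity needs the supremum over \emph{all} starting states, and decay along a set of positive $\pi$-mass can be polynomial. The paper's argument for $(5)\Rightarrow(3)$ (Smorodinsky; Proposition~3 of \cite{angel2021markov} applied to $Z_i=\1_{\{Y_i=b\}}$) uses that the coding is \emph{finitary}, so (5) must be read as ``finitary coding of an i.i.d.\ process''. With that reading the step is short. Choose $r$ with $\Proba(r_\varphi(X)>r)<\pi(b)/2$. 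The events $E_j=\{r_\varphi(T^jX)\le r,\ Y_j=b\}$ are determined by $X$ on $B_\infty(j,r)$, so for $j$ spaced $2r+1$ apart they are independent, each with probability at least $\pi(b)/2$. This gives $\Proba(Y_1,\dots,Y_n\neq b)\le(1-\pi(b)/2)^{\lfloor n/(2r+1)\rfloor}$, and (3) follows after dividing by $\pi(b)$.

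Your $(3)\Rightarrow(4)$ also fails: under (3) alone a \emph{left} finitary coding need not exist. Since $\{r^-_\varphi(T^nX)\le n-1\}\in\sigma(X_1,\dots,X_n)$, the variable $Y_n$ agrees with a $\sigma(X_1,\dots,X_n)$-measurable variable except on an event of probability $\Proba(r^-_\varphi(X)\ge n)$ that is independent of $\sigma(X_i:i\le0)\supseteq\sigma(Y_i:i\le0)$. The Markov property then yields $\sup_y\|P^n(y,\cdot)-\pi\|_{\mathrm{TV}}\le2\Proba(r^-_\varphi(X)\ge n)\to0$, i.e.\ uniform ergodicity. The Toboggan chain with $p_i=2^{-i-1}$ has exponential return times but is not uniformly ergodic, so it admits no such coding. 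Concretely, ``the last regeneration time before $0$'' is not a backward stopping time of the inputs, since whether the chain is at $b$ at time $-k$ depends on its state further back; this is the Foss--Tweedie obstruction recalled in the paper. The paper instead uses Theorem~1 of \cite{angel2021markov}, whose coding is genuinely two-sided, together with Theorem~\ref{main-thm-ffiid}. Your fallback to Theorem~\ref{theo:LFC} is unavailable for the same reason. Finally, in $(3)\Rightarrow(6)$, exponential return times do not imply finite entropy. Take $P(0,0)=\tfrac12$, $P(0,i)=\tfrac12p_i$, $P(i,0)=1$ with $\sum_ip_i\log(1/p_i)=\infty$: all return times have exponential tails while the entropy rate is infinite. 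So Rudolph's finite-entropy theorem does not cover all cases, which is why the paper also invokes \cite{spinka2025new}.
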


\begin{remark}
Many further equivalent formulations are known. For instance, \cite{gallegos2024equivalences} lists 27 equivalent characterizations of geometric ergodicity. Moreover, \cite{bradley/2005} shows that geometric ergodicity is equivalent to exponential $\beta$-mixing; see also \cite{shields/1996}.
\end{remark}

\begin{remark}
Foss and Tweedie \cite{foss/tweedie/1998} proved that for Markov chains on general state spaces, the existence of a CFTP algorithm is equivalent to uniform ergodicity. Thus Theorem~\ref{prop:equiv_Markov} shows that geometrically ergodic chains that are not uniformly ergodic provide natural examples of finitary processes that cannot arise from a CFTP construction.
\end{remark}

\begin{proof}
The equivalence (1) $\Leftrightarrow$ (2) is due to \cite{dedecker/gouezel/2015}.

(2) $\Rightarrow$ (3): fixing $b\in B$ and applying the Gaussian tail bound \eqref{Gaussian-tails} to the empirical mean of $\1_{\{Y_i=b\}}$ with deviation level $\pi(b)/2$ gives
\[
\mathbb P(\tau_b>n)
=
\mathbb P\Bigl(\sum_{i=1}^n\1_{\{Y_i=b\}}=0\Bigr)
\le
\exp(-c_b n)
\]
for some $c_b>0$. Stationarity then yields exponential tails for the return time from $b$.

(3) $\Rightarrow$ (4): this is Theorem~1 of \cite{angel2021markov}.

(4) $\Rightarrow$ (5) is immediate.

(5) $\Rightarrow$ (3): this implication is essentially due to Smorodinsky and appears in \cite{rudolph/1982}; it also follows from \cite{angel2021markov}. Indeed, for fixed $b\in B$, the indicator process
\[
Z_i:=\1_{\{Y_i=b\}},\qquad i\in\Z,
\]
is finitary, and Proposition~3 of \cite{angel2021markov} then yields exponential tails for its inter-arrival times, hence for return times to $b$ in $Y$.

(4) $\Rightarrow$ (2) follows from Theorem~\ref{main-thm-ffiid}.

Finally, (6) $\Leftrightarrow$ (3) is due to \cite{rudolph/1982} under a finite-entropy assumption, and was recently reproved by \cite{spinka2025new} without any entropy restriction.
\end{proof}

\subsubsection{Further remarks on Markov chains}

\paragraph{Explicit bounds under uniform ergodicity.}
For Markov chains on general state spaces, \cite{foss/tweedie/1998} showed that CFTP is equivalent to uniform ergodicity. Thus uniformly ergodic chains satisfy Corollary~\ref{cor:CFTP}. Under a Doeblin condition, there exist $m\ge1$, a probability measure $\nu$, and $\beta\in(0,1)$ such that
\[
\inf_{x\in B}P^m(x,E)\ge \beta\,\nu(E),
\qquad \forall E\subset B.
\]
In this setting, \cite{foss/tweedie/1998} use the multigamma coupling of \cite{murdoch1998exact}, for which
\[
\frac{\theta}{m}\eqlaw\mathrm{Geo}(\beta).
\]
Hence $\E[\theta]=m/\beta$, and Corollary~\ref{cor:CFTP} yields
\[
\log \E\big[\exp\{\lambda(f(Y)-\E f(Y))\}\big]
\le
3\lambda^2(2m/\beta+1)^2\|\delta f\|_2^2,
\qquad \forall\lambda>0.
\]

\paragraph{Explicit bounds under geometric ergodicity.}
Theorem~\ref{prop:equiv_Markov}, combined with Theorem~\ref{main-thm-ffiid}, gives
\begin{equation}\label{eq:GCBinexplicit-app}
\log \E\big[\exp\{\lambda(f(Y)-\E f(Y))\}\big]
\le
2^d\lambda^2\,\E\big[(2r_\varphi(X)+1)^2\big]\,
\|\delta f\|_2^2,
\qquad \forall\lambda>0,
\end{equation}
where $r_\varphi$ is the coding radius associated with a finitary coding of the chain. The construction of \cite{angel2021markov} gives, in principle, explicit tail bounds on $r_\varphi$, though the resulting constants are much less transparent than in the uniformly ergodic case.

\paragraph{A toy example: the Toboggan chain.}
Consider the Markov chain on $B=\N$ with transition matrix
\[
P(0,i)=p_i,\quad i\ge0,
\qquad
P(i,i-1)=1,\quad i\ge1,
\]
where $(p_i)_{i\ge0}$ is a probability distribution on $\N$ with $p_i>0$ for all $i$. This chain is irreducible and aperiodic. It is positive recurrent if and only if
\[
\mu:=\sum_{i\ge0}i\,p_i<\infty,
\]
in which case the stationary distribution is
\[
\pi(j)=\frac{1}{\mu}\sum_{k\ge j}p_k,
\qquad j\ge0.
\]
It satisfies the equivalent properties of Theorem~\ref{prop:equiv_Markov} if and only if there exists $r>1$ such that
\[
\E_0[r^{\tau_0}]<\infty;
\]
see \cite[Theorem~15.1.4]{meyn/tweedie/2012}. However, unless $(p_i)$ has finite support, the chain is not uniformly ergodic, because
\[
P^n(k,0)=0 \quad\text{for all }k>n.
\]

To illustrate \eqref{eq:GCBinexplicit-app}, consider the geometric case $p_i=2^{-i-1}$. Then the associated renewal process
\[
Z_i:=\1_{\{Y_i=0\}},\qquad i\in\Z,
\]
admits a finitary coding by \cite{angel2021markov}. In this simple case, one checks directly from their proof that the coding radius $\theta$ satisfies
\[
\mathbb P(\theta=i)=2^{-(i+1)},
\qquad i\ge0.
\]
Hence $\E[\theta]=1$ and $\E[\theta^2]=3$, yielding an explicit Gaussian concentration constant despite the lack of uniform ergodicity.

\subsubsection{Renewal processes}

A discrete-time renewal process is a binary-valued process in which the distances between successive $1$'s are i.i.d.\ random variables. Let $(f_k)_{k\ge1}$ denote their common distribution. Renewal processes are Markovian only in the geometric case, but they retain many features of the Markov setting. In particular, the indicator process of successive visits to a fixed state in a Markov chain is a renewal process.

Using this connection, one obtains the following.

\begin{proposition}
Let $Y=(Y_n)_{n\in\Z}$ be a renewal process with $\gcd\{k\ge1:f_k>0\}=1$. Then the following are equivalent:
\begin{enumerate}[label=\textup{(\arabic*)}]
\item $Y$ satisfies Gaussian concentration;
\item $\sum_{k\ge1}s^k f_k<\infty$ for some $s>1$;
\item $Y$ is a finitary process with exponentially decaying coding radius;
\item $Y$ is a finitary coding of an i.i.d.\ process.
\end{enumerate}
\end{proposition}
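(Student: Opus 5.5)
The plan is to prove the cycle (2)$\Rightarrow$(1)$\Rightarrow$(3)$\Rightarrow$(2), together with (3)$\Leftrightarrow$(4), by reducing to the Markov chain case of Theorem~\ref{prop:equiv_Markov}. The reduction uses the \emph{age chain}: set $A_n:=n-\max\{m\le n: Y_m=1\}$, the time since the last renewal. This is a stationary Markov chain on $\N$ of Toboggan type (running upward instead of downward). From state $a$ it jumps to $0$ with probability $f_{a+1}/\sum_{k>a}f_k$ and to $a+1$ otherwise. Since $Y_n=\1_{\{A_n=0\}}$, the process $Y$ is a $0$-block code of $A$. The gcd assumption makes $A$ aperiodic, and $A$ is irreducible on $\{0,\dots,\sup\{k:f_k>0\}-1\}$. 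Positive recurrence, which is implicit in the existence of a stationary renewal process, means $\sum_k kf_k<\infty$.

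\textbf{(2)$\Rightarrow$(1).} Apply the Gaussian tail bound \eqref{Gaussian-tails} to $f(Y)=\sum_{i=1}^n Y_i$. Here $\|\delta f\|_2^2=n$, and the deviation level is $n\,\E Y_0/2$, with $\E Y_0=1/\sum_k kf_k>0$. This gives $\Proba(Y_1=\dots=Y_n=0)\le 2\e^{-cn}$, exactly as in the step (2)$\Rightarrow$(3) of Theorem~\ref{prop:equiv_Markov}. Stationarity gives
\[
\Proba(Y_1=\dots=Y_n=0)=\E Y_0\sum_{k>n}(k-n)f_k\ \ge\ \E Y_0\sum_{k>n+1}f_k,
\]
so the tails of $(f_k)$ decay exponentially. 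This is equivalent to $\sum_k s^kf_k<\infty$ for some $s>1$.

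\textbf{(1)$\Rightarrow$(3).} Under (1), the return time of $A$ to $0$ has law $(f_k)$, so $A$ has exponential return times to $0$. For an irreducible chain this extends to every state $a$: the return to $a$ is dominated by a geometric number of excursions from $0$, each with exponential moments. Theorem~\ref{prop:equiv_Markov} (via (3)$\Rightarrow$(4)) then gives a finitary coding of $A$ by an i.i.d.\ process with exponentially decaying radius. Composing with the $0$-block map $a\mapsto\1_{\{a=0\}}$ does not increase the radius, which yields (3).

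\textbf{(3)$\Rightarrow$(2)} is Theorem~\ref{main-thm-ffiid}, since exponential tails give a finite second moment of the coding volume. \textbf{(3)$\Rightarrow$(4)} is trivial.

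\textbf{(4)$\Rightarrow$(1)} is the main obstacle, since a finitary coding without moment control cannot be fed into Theorem~\ref{main-thm-ffiid}. I would invoke the result used for (5)$\Rightarrow$(3) in Theorem~\ref{prop:equiv_Markov}: Proposition~3 of \cite{angel2021markov}, in the Smorodinsky form, states that a binary finitary factor of an i.i.d.\ process has exponentially tailed gaps between consecutive $1$'s. Applied to $Y$ itself, these gaps are exactly the i.i.d.\ inter-arrival times with law $(f_k)$, and (1) follows. The point to verify is that the cited proposition applies to arbitrary finitary (not merely Markov) binary factors; that is how it is used in the earlier proof for the indicator process $Z_i$.
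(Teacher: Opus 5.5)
Your arguments are sound, but several headings do not match what the paragraphs prove:
\begin{itemize}
\item The paragraph labelled (2)$\Rightarrow$(1) assumes Gaussian concentration and derives exponential tails for $(f_k)$, so it proves (1)$\Rightarrow$(2).
\item The paragraph labelled (1)$\Rightarrow$(3) needs exponential tails of $(f_k)$ in its first line, so it is (2)$\Rightarrow$(3).
\item Theorem~\ref{main-thm-ffiid} outputs Gaussian concentration, so your ``(3)$\Rightarrow$(2)'' is really (3)$\Rightarrow$(1).
\item Proposition~3 of \cite{angel2021markov} yields exponential gap tails, i.e.\ (4)$\Rightarrow$(2).
\end{itemize}
Read literally, your announced cycle (2)$\Rightarrow$(1)$\Rightarrow$(3)$\Rightarrow$(2) is circular at its first step. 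With the labels corrected you have (1)$\Rightarrow$(2)$\Rightarrow$(3)$\Rightarrow$(1) and (3)$\Rightarrow$(4)$\Rightarrow$(2). That gives the full equivalence, so this is a write-up fix, not a mathematical gap.

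The paper's proof is entirely by citation. It takes (1)$\Leftrightarrow$(2) from \cite[Theorem~3.4]{chazottes2023gaussian}, (2)$\Rightarrow$(3) from Theorem~2 of \cite{angel2021markov} applied directly to the renewal process, and (4)$\Rightarrow$(2) from Proposition~3 of \cite{angel2021markov}. Theorem~\ref{main-thm-ffiid} plays no role there.

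Your (4)$\Rightarrow$(2) is the same as the paper's. The other steps differ:
\begin{itemize}
\item Your (1)$\Rightarrow$(2) is self-contained, and the identity $\Proba(Y_1=\dots=Y_n=0)=\E Y_0\sum_{k>n}(k-n)f_k$ is correct.
\item Your (2)$\Rightarrow$(1) goes through (3) and Theorem~\ref{main-thm-ffiid}. So the equivalence comes from the paper's own abstract theorem rather than from an external concentration result for renewal processes. The cost is that the constant is controlled only through the second moment of the coding radius of the Angel--Spinka coding, which is not transparent.
\item For (2)$\Rightarrow$(3) you detour through the age chain and Theorem~\ref{prop:equiv_Markov}. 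This obliges you to check irreducibility, aperiodicity, and exponential return times to every state. Your excursion sketch handles this once the initial leg from $a$ to $0$ (with law $T-a$ given $T>a$) is included.
\end{itemize}
The age-chain detour is valid but roundabout: the renewal-process theorem the paper invokes directly is a key step in \cite{angel2021markov} towards the Markov-chain theorem you use.
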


\begin{proof}
The equivalence (1) $\Leftrightarrow$ (2) is proved in \cite[Theorem~3.4]{chazottes2023gaussian}. Implication (2) $\Rightarrow$ (3) is Theorem~2 of \cite{angel2021markov}. Implication (3) $\Rightarrow$ (4) is immediate. Finally, (4) $\Rightarrow$ (2) is Proposition~3 of \cite{angel2021markov}.
\end{proof}

\subsection{Stochastic chains with unbounded memory}

A stochastic chain with unbounded memory is a discrete-time process whose
conditional distribution at time $n$, given the past, may depend on an
unbounded portion of the past rather than on a fixed finite window.
This class contains Markov chains and renewal processes as special cases,
but also includes genuinely non-Markovian processes.
Such processes are also known as \emph{chains with complete connections}
or \emph{$g$-measures}; see \cite{fernandez/maillard/2005}.

\medskip

Let $B$ be a measurable space with sigma-field $\mathcal B$.
A measurable map
\[
g:\mathcal B\times B^{(-\infty,-1]}\to[0,1]
\]
is called a \emph{transition kernel} if

\begin{itemize}
\item for every $x\in B^{(-\infty,-1]}$, the map
$S\mapsto g(S\mid x)$ is a probability measure on $(B,\mathcal B)$,
\item for every $S\in\mathcal B$, the map
$x\mapsto g(S\mid x)$ is measurable.
\end{itemize}

A stationary process $Y=(Y_n)_{n\in\Z}$ with law $\mu$ on $B^\Z$
is said to be compatible with $g$ if for every $n\in\Z$ and every
$S\in\mathcal B$,
\[
\mathbb E_\mu\!\left[\mathbf 1_S(Y_n)\,\middle|\,
Y_{-\infty}^{\,n-1}\right]
=
g\!\left(S\mid Y_{-\infty}^{\,n-1}\right)
\quad\text{$\mu$-a.s.}
\]
When such a stationary compatible process exists, we call it {\em stochastic chain with unbounded memory}.

\medskip

Gaussian concentration for chains with unbounded memory was established
in \cite{chazottes2023gaussian} under suitable regularity assumptions
on the kernel.
In the present paper, we obtain concentration instead through
our general finitary-coding results.
More precisely, whenever the process can be generated by a
coupling-from-the-past (CFTP) algorithm with finite expected regeneration
time $\mathbb E[\theta]<\infty$, Corollary~\ref{cor:CFTP} implies that
the process satisfies Gaussian concentration,
with a constant controlled by $(\mathbb E[\theta])^2$.

The first CFTP construction for chains with unbounded memory
was introduced in \cite{comets_fernandez_ferrari_2002}.
Assume that $B$ is countable.
Define
\begin{align*}
\alpha_0
&:=
\sum_{b\in B}\,\inf_{x\in B^{(-\infty,-1]}} g(b\mid x),\\
\alpha_k
&:=
\inf_{a_{-k}^{-1}\in B^{[-k,-1]}}
\sum_{b\in B}\,
\inf_{x\in B^{(-\infty,-k-1]}}
g\big(b\mid x a_{-k}^{-1}\big),
\qquad k\ge1.
\end{align*}
They proved that if
\[
\prod_{k\ge0}\alpha_k>0,
\]
then the corresponding CFTP algorithm has finite expected regeneration time.
We now record a simple observation concerning its exact value.

Let $\theta$ denote the regeneration time of the CFTP construction.
By \cite[Theorem 4.1(iv)]{comets_fernandez_ferrari_2002},
\[
\mathbb P(\theta>m)=\mathbb P(\zeta_m=0), \quad m\ge0,
\]
where $(\zeta_m)_{m\ge0}$ is a Markov chain on $\mathbb N$
started at $0$ and with transition probabilities
\[
\mathbb P(\zeta_{m+1}=i+1\mid \zeta_m=i)=\alpha_i,
\qquad
\mathbb P(\zeta_{m+1}=0\mid \zeta_m=i)=1-\alpha_i.
\]
Thus, $\zeta$ either jumps to $0$ or increases by one.
The event $\{\tau_0=\infty\}$ that the chain never returns to $0$
corresponds to the event that it keeps increasing forever, which occurs
with probability
\[
\mathbb P(\tau_0=\infty)=\prod_{k\ge0}\alpha_k.
\]
Using the identity 
$\sum_{m\ge0}\mathbb P(\zeta_m=0)
=\mathbb P(\tau_0=\infty)^{-1}$ (see \cite[(A.5)]{bressaud_fernandez_galves_1999a}),
we obtain
\[
\mathbb E[\theta]
=
\sum_{m\ge0}\mathbb P(\theta>m)
=
\sum_{m\ge0}\mathbb P(\zeta_m=0)
=
\frac{1}{\prod_{k\ge0}\alpha_k}.
\]

The following result is therefore a consequence of Corollary~\ref{cor:CFTP}.

\begin{proposition}
Let $Y$ be a stationary process with countable alphabet $B$
and transition kernel $g$.
If
\[
\prod_{k\ge0}\alpha_k>0,
\]
then $Y$ satisfies Gaussian concentration.
More precisely, if $\theta$ denotes the regeneration time of the CFTP
construction, then the Gaussian concentration constant $C$
in \eqref{def-GCB} is proportional to
\[
\big(\mathbb E[\theta]\big)^2
=
\Bigg(\prod_{k\ge0}\alpha_k\Bigg)^{-2}.
\]
\end{proposition}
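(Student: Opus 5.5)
The proposition follows directly from Corollary~\ref{cor:CFTP} once two things are checked. First, the construction of \cite{comets_fernandez_ferrari_2002} is a CFTP algorithm in the sense of Section~\ref{sec:LFP}, so that $Y$ is a left finitary coding of an i.i.d.\ input. Second, its regeneration time $\theta$ has finite mean, with the stated value.

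The first step is to recall the structure of the CFTP construction. One takes i.i.d.\ uniform variables $(U_n)_{n\in\Z}$. The kernel $g$ is decomposed as a mixture $g(\cdot\mid x)=\sum_{k\ge0}\lambda_k\, g_k(\cdot\mid x_{-k}^{-1})$ with $\lambda_k\ge0$ determined by the $\alpha_k$'s, and the update at time $n$ uses $U_n$ both to select a memory depth $k$ and to draw the symbol. Thus $Y_n$ is a deterministic function of $(U_{n},U_{n-1},\dots,U_{n-\theta_n})$, where $\theta_n=\theta\circ T^n$ is a stopping time for the backward filtration. This is exactly the representation $Y_0=\psi(X_0,\dots,X_{-\tau})$ discussed after Theorem~\ref{theo:LFC}. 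Hence $r^-_\varphi\le\theta$, and $Y$ is a left finitary coding of $X=(U_n)$ as soon as $\theta<\infty$ almost surely. Stationarity and compatibility of the resulting process with $g$ are part of Theorem~4.1 of \cite{comets_fernandez_ferrari_2002}, which I take as given.

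The second step is the moment bound, and it is already carried out in the text preceding the statement. One uses $\Proba(\theta>m)=\Proba(\zeta_m=0)$ for the house-of-cards chain $\zeta$, together with the renewal identity $\sum_m\Proba(\zeta_m=0)=\Proba(\tau_0=\infty)^{-1}$ and $\Proba(\tau_0=\infty)=\prod_k\alpha_k$. Under the hypothesis $\prod_k\alpha_k>0$, these give $\E[\theta]=(\prod_k\alpha_k)^{-1}<\infty$, and in particular $\theta<\infty$ almost surely.

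Finally, Corollary~\ref{cor:CFTP} (equivalently, Theorem~\ref{theo:LFC} with $r^-_\varphi$ replaced by its upper bound $\theta$) gives
\[
\log\E\big[\e^{\lambda(f(Y)-\E f(Y))}\big]\le 3\lambda^2\big(2\E[\theta]+1\big)^2\|\delta f\|_2^2 .
\]
Since $\E[\theta]\ge1$, the constant satisfies $C\le 6(2\E\theta+1)^2\le 54(\E\theta)^2$, which is proportional to $(\prod_k\alpha_k)^{-2}$ up to a universal factor.

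The main subtlety is the replacement of $r^-_\varphi$ by $\theta$ inside Theorem~\ref{theo:LFC}. There are two ways to justify it. One is to note that the Marton-type bounds only require the influence indicators to be dominated by indicators built from some stopping time, and the short-range factorization with $\alpha=1$ still holds for $\theta$. The cleaner way is simply the pointwise inequality $r^-_\varphi\le\theta$, which gives monotonicity of $\E[2r^-_\varphi+1]$. I would use the latter.
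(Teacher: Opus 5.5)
Your proposal is correct and follows the paper's own route: the Comets--Fern\'andez--Ferrari construction is a CFTP that gives a left finitary coding with $r^-_\varphi\le\theta$, the house-of-cards identity gives $\E[\theta]=(\prod_{k\ge0}\alpha_k)^{-1}<\infty$, and Corollary~\ref{cor:CFTP} then yields the bound; your estimate $C=6(2\E[\theta]+1)^2\le 54(\E[\theta])^2$ (using $\E[\theta]\ge1$) simply makes ``proportional'' precise. One point is left implicit, both in your proposal and in the paper: the given $Y$ must have the same law as the CFTP output, which follows from uniqueness of the stationary $g$-compatible law (a consequence of $\theta<\infty$ almost surely, and part of the Comets--Fern\'andez--Ferrari theorem).
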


\medskip

The existence of CFTP constructions with finite expected regeneration
time has since been extended far beyond the setting of
\cite{comets_fernandez_ferrari_2002};
see, for instance,
\cite{gallo/2011,desantis/piccioni/2010,gallo/garcia/2013,
gallo2014attractive,de2025new}.
In all these situations, whenever the expected coalescence time is finite,
Gaussian concentration follows from Corollary~\ref{cor:CFTP}.

Finally, although Corollary~\ref{cor:CFTP} applies to general alphabets,
existing CFTP constructions for chains with unbounded memory appear,
to the best of our knowledge, to be available only for countable alphabets.

\section{Open problems}\label{sec:remarks-and-questions}

\subsection{Does Gaussian concentration imply finitary coding by an i.i.d. field?}

Theorem \ref{main-thm-ffiid} raises a natural question. Does Gaussian concentration imply that a shift-invariant random field has to be a finitary coding of an i.i.d. process, under a suitable moment condition on the coding volume?
The guiding intuition is that Gaussian concentration imposes strong structural constraints on the dependence structure of the field. More precisely, we ask the following question.

\begin{question}
Let $Y=(Y_i)_{i\in\Zd}$ be a shift-invariant random field satisfying Gaussian concentration.
Does Gaussian concentration entail the existence of a finitary i.i.d. coding, under an appropriate moment condition on the coding volume?
\end{question}

Equivalently, is it true that if for every coding $\varphi$ and every i.i.d.\ process $X$ such that $Y\eqlaw \varphi(X)$ one has
\[
\E\big(|B_\infty(0,r\!_\varphi)|\big)=\infty,
\]
then $Y$ cannot satisfy Gaussian concentration?
As shown in Proposition~\ref{prop:Ising-at-betac}, this is indeed the case for the Ising model ($d\ge 2$) at $\beta=\beta_c$. Additionally, as discussed above, the conjecture applies to countable-state Markov chains and renewal processes.

When $Y$ takes values in a finite alphabet, one may further ask whether the i.i.d. random field used for the coding can also be chosen to be finite valued.

\subsection{Polynomial coding tails and sharpness of moment conditions}\label{sec:heavyTail}

In all examples in dimension $d \geq 2$ discussed above, the assumptions of Theorem~\ref{main-thm-ffiid} (finite second moment of the coding volume) and Theorem~\ref{theo:cone} (finite first moment) are satisfied with substantial room to spare. Indeed, the coding radius typically exhibits exponential or stretched-exponential tails.

This raises the question of whether these moment conditions are close to optimal. In particular, it is natural to ask whether one can construct examples that lie near the boundary of these assumptions.

\begin{question}
Do there exist Gibbs measures in dimension $d \geq 2$ that are finitary codings of i.i.d.\ random fields, for which the coding radius has polynomially decaying tails, while the coding volume still has a finite first or second moment?
\end{question}

Such examples would provide a natural testing ground for the sharpness of our results. Heuristically, if the coding radius has tail of order $r^{-\alpha}$, then the integrability of the coding volume depends on the relation between $\alpha$ and the dimension $d$, suggesting the existence of borderline regimes.

A natural direction is to investigate models with slow decay of correlations, for instance when correlations are bounded below by a polynomial rate, 
since exponential tails of the coding radius imply exponential decay of correlations, as distant regions are independent unless the coding radii bridge the separation, an event whose probability decays exponentially in the distance.
The long-range Ising model provides a particularly promising class of examples in this direction.

In this direction, it is shown in a recent PhD thesis that if the coupling constants of the long-range Ising model decay like $|i-j|_1^{-\alpha}$ with $\alpha>d$, then the model is a finitary coding of an i.i.d.\ random field whenever $\alpha>2d$ and the inverse temperature $\beta$ is sufficiently small. The proof is outlined in an appendix of that work \cite{Faipeur2025}.

A related question, raised by Spinka \cite{spinka-AoP}, concerns the existence of finitary codings with good tail behavior under mixing assumptions.

\begin{question}
Does exponential weak spatial mixing imply the existence of a finitary coding of an i.i.d.\ random field with finite expected coding radius?
\end{question}

A positive answer would, in combination with coupling-from-the-past constructions, imply Gaussian concentration via Theorem~\ref{theo:cone}. More generally, this question highlights the broader problem of relating quantitative mixing properties to the tail behavior of coding radii.
\subsection{Coding volume with infinite first moment}

We have seen examples in which any finitary coding necessarily has a coding volume with infinite first moment, and for which not only Gaussian concentration fails, but even a moment concentration bound of order $2$ is impossible. A prominent example is the Ising model in dimension $d \geq 2$ at the critical temperature.

Gaussian concentration is a particularly strong form of concentration, and its complete failure in such examples highlights the need to consider weaker notions. It is therefore natural to ask whether some form of concentration may still persist when the coding volume has heavy tails. For instance, one may ask whether moments can still be controlled up to a certain order, or whether all moments can be bounded with constants growing sufficiently fast to preclude exponential moment bounds.

Examples exhibiting intermediate behavior are known. In particular, for the Ising model in dimension $d \geq 2$ at sufficiently low temperature, one obtains stretched-exponential concentration bounds \cite{redig2009concentration,chazottes/collet/kulske/redig/2007}. This suggests that the strength of concentration should be closely related to the tail behavior of the coding volume.

More precisely, we say that a random field $Y$ satisfies a moment concentration bound of order $2p$, with $p \in \N$, if there exists $C_p > 0$ such that for every local function $f$ with the bounded-differences property,
\[
\E\big[(f(Y) - \E f(Y))^{2p}\big] \leq C_p \, \|\delta f\|_p^{2p}.
\]

This leads to the following question.

\begin{question}
Let $Y = \varphi(X)$, where $X$ is an i.i.d.\ random field and $\varphi$ is a finitary coding. To what extent can the strength of concentration for $Y$ be characterized in terms of the tail behavior of the coding volume $|B_\infty(0,X)|$? In particular, which moment concentration bounds can be expected when $\E(|B_\infty(0,X)|)=\infty$?
\end{question}



\section*{Acknowledgments}

The authors thank Aernout van Enter, Corentin Faipeur, S\'ebastien Gou\"ezel, and Frank Redig for helpful comments that significantly improved the clarity and presentation of the paper.

D.~Y.~T. and S.~G. gratefully acknowledge CNRS and \'Ecole Polytechnique for supporting their visits to CPHT, funding a one-month stay in 2022 and another in 2024. \\
J.-R.~C. gratefully acknowledges financial support from the R\'eseau Math\'ematique Franco-Br\'esilien
(\url{https://www.rfbm.fr/}) and the IRP NP-Strong (Non-perturbative methods in strongly coupled field theories and statistics).\\
S.~G. was supported by FAPESP through grants 2023/13453-5, 2024/06341-9 and CNPq through grants 441884/2023-7 and 314909/2023-0.

\bibliographystyle{abbrv}
\bibliography{Concentration_fields}

\end{document}